\def\Ext{\textrm{Ext}}
\def\Hom{\textrm{Hom}}
\def\End{\textrm{End}}
\def\Aut{\textrm{Aut}}
\def\dim{\textrm{dim}}
\def\rad{\textrm{rad}}
\def\dimv{\underline{\rm dim}}
\author{Yong Jiang \and Jie Sheng \and Jie Xiao}
\title{The elements in crystal bases corresponding to exceptional modules\footnote{ The research was supported in part by NSF of China (No.
10631010) and by NKBRPC (No. 2006CB805905)}}
\newtheorem{thm}{Theorem}[section]
\newtheorem{prop}[thm]{Proposition}
\newtheorem{lem}[thm]{Lemma}
\newtheorem{rem}[thm]{Remark}
\begin{document}

\maketitle

\begin{abstract}
According to the Ringel-Green Theorem(\cite{g},\cite{r1}), the
generic composition algebra of the Hall algebra provides a
realization of the positive part of the quantum group. Furthermore,
its Drinfeld double can be identified with the whole quantum
group(\cite{x},\cite{xy}), in which the BGP-reflection functors
coincide with Lusztig's symmetries. We first assert the elements
corresponding to exceptional modules lie in the integral generic
composition algebra, hence in the integral form of the quantum
group. Then we prove that these elements lie in the crystal basis up
to a sign. Eventually we show that the sign can be removed by the
geometric method. Our results hold for any type of Cartan datum.
\end{abstract}

\section{Introduction}\label{section 1}

Let $\Delta$ be a symmetrizable generalized Cartan matrix, or
$\Delta=(I, (-,-))$ a Cartan datum in the sense of Lusztig
\cite{l1}, $\mathfrak{g}$ the corresponding symmetrizable Kac-Moody
algebra. We have the Drinfeld-Jimbo quantized enveloping algebra, or
the quantum group, $U=U_{q}(\mathfrak{g})$ attached to the Cartan
datum $\Delta$. Lusztig gave it a series of important automorphisms,
now called Lusztig's symmetries. By applying Lusztig's symmetries
and the induced action of the braid group on $U^+,$ Lusztig found an
algebraic approach to construct the canonical basis of $U^+$ in
finite type. If $\Delta$ is of infinite type, there are much more
root vectors beyond the set obtained by applying the braid group
action on Chevalley generators of $U^+.$ However Lusztig's geometric
method by using perverse sheaves and intersection cohomology to
construct the canonical bases works for general infinite type.  A
different algebraic approach due to Kashiwara works for arbitrary
type. He constructed the crystal basis and the global crystal basis
($=$ canonical basis) of the negative part $U^{-}$ of the quantum
group. Roughly speaking, the crystal basis is a good basis of the
quantum group at $q=0.$

Given a Cartan datum $\Delta$, there is a finite dimensional
hereditary $k$-algebra $\Lambda$ to realize it, where $k$ is a
finite field. Then we have the corresponding Hall algebra
$\mathscr{H}(\Lambda)$. According to the Ringel-Green Theorem(see
\cite{g},\cite{r1}), the generic composition algebra
$\mathscr{C}(\Delta)$ of $\mathscr{H}(\Lambda)$, precisely of the
Cartan datum $\Delta$, provides a realization of the positive part
$U^{+}$ of the quantum group corresponding to $\Delta$. With the
comultiplication of $\mathscr{H}(\Lambda)$ given by Green in
\cite{g}, it is natural to obtain a Hopf algebra structure of
$\mathscr{H}(\Lambda)$ by adding a torus, and then to consider the
Drinfeld double of the Hall algebra (see \cite{x}). Therefore, the
Drinfeld double of the generic composition algebra
$\mathcal{D}_{\mathscr{C}}(\Delta)$ provides a realization of the
whole $U$. This realization builds up a bridge between the quantum
groups and the representation theory of hereditary algebras.
Especially, it connects Lusztig's symmetries with BGP-reflection
functors (see \cite{xy}), so this two important operators can be
considered simultaneously.

In this article, we consider a special family of elements $\{\langle
u_{\lambda}\rangle |\lambda \in \mathcal{E}\}$ in the Hall algebra,
where $\mathcal{E}$ is the set of isomorphism classes of all
exceptioal $\Lambda$-modules. These elements are much more than the
elements obtained by applying the braid group action on Chevalley
generators of $U^+$. From the work of \cite{z} and \cite{cx} we know
that these elements lie in the generic composition algebra
$\mathcal{D}_{\mathscr{C}}(\Delta)$. Our first result asserts in
Theorem \ref{thm 6 exc in integral comp alg} that each $\langle
u_{\lambda}\rangle $ lies in the integral generic composition
algebra $\mathscr{C}_{\mathbb{Z}}(\Delta)$, hence in the integral
form of the positive part of the quantum group $U^{+}_{\mathbb{Z}}$
(by identifying $\mathcal{D}_{\mathscr{C}}(\Delta)$ with $U$).

The main goal of us is to relate the exceptional modules with
Kashiwara's crystal bases. For convenience, we use the the crystal
structure $(L(\infty),B(\infty))$ in $U^{+}$ instead of in $U^{-}$.
Our main result is (see Theorem \ref{thm 6 exc in crystal basis})
that the image of $\langle u_{\lambda}\rangle$ in
$L(\infty)/qL(\infty)$ lies actually in the crystal basis
$B(\infty)$ up to a sign. In the last section we remove the sign by
comparing with Lusztig's geometric method. Therefore the image of
$\langle u_{\lambda}\rangle$ in $L(\infty)/qL(\infty)$ always
belongs to $B(\infty)$.

The organization of this paper is as follows: In Section
\ref{section 2}, we review some basic facts of quantum groups and
crystal bases. Then the polarization, which will be called
Kashiwara's pairing, is defined in the positive part of quantum
groups. For the details, see \cite{l1} and \cite{k}. In Section
\ref{section 3}, we first give the definitions of Hall algebras and
composition algebras. Following \cite{x}, we concisely restate the
Drinfeld double structure of Hall algebras. Also, we get some
important operators $r'_{i}$ in the Hall algebras. Then $r'_{i}$ is
the same as the derivation operators $f'_{i}$ when we identify the
generic composition algebra with $U^{+}$. The aim of Section
\ref{section 4} is to obtain $\langle u_{\lambda}\rangle$
corresponding to preprojective or preinjective modules from the
simple modules, after establishing the isomorphism between Lusztig's
symmetries and reflection functors. The results come from
\cite{bgp}, \cite{r5} and \cite{xy}. Section \ref{section 5} gives a
review of an algorithm in \cite{cx}. This algorithm comes from a
result of Crawley-Boevey \cite{cb}, to state that any exceptional
module can be obtained inductively from simple modules using braid
group actions on exceptional sequences. The main results will be
stated in Section \ref{section 6}. In Section \ref{section 7} we
prove the first main result by a combination of algorithms in
Section \ref{section 4} and \ref{section 5}. Then in Section
\ref{section 8}, we introduce Ringel's pairing in $U^{+}$, and
compare it with Kashiwara's pairing. Our second main result follows
from direct calculations of Ringel's pairing. However, we need to
remove the sign, which will be done using geometric methods in the
last section.

\section{Quantum groups and crystal bases}\label{section 2}

\textbf{2.1 Quantum groups.} \ \ Let $(I,(-,-))$ be a \emph{Cartan
datum} in the sense of Lusztig. i.e. $I$ is a finite set and $(-,-)$
is a symmetric bilinear form
$\mathbb{Z}[I]\times\mathbb{Z}[I]\rightarrow\mathbb{Z}$ which
satisfies the following conditions:

(a) $(i,i)\in\{2,4,6,...\}$ for any $i\in I$.

(b) $2(i,j)/(i,i)\in \{0,-1,-2,...\}$ for any $i\neq j$ in $I$.

Let $Q=\mathbb{Z}[I]$, $Q_{+}=\mathbb{N}[I]$. $Q$ is called the
\emph{root lattice}. For any $i\in I$ define $s_{i}:Q\rightarrow Q$
by $s_{i}(\mu)=\mu-\dfrac{2(\mu,i)}{(i,i)}i$. $s_{i}$ is called a
\emph{simple reflection}. The \emph{Weyl group} $W$ is defined to be
the group generated by all the simple reflections.

Note that we can identify a Cartan datum $(I,(-,-))$ with a
\emph{symmetrizable generalized Cartan matrix}
$\Delta=(a_{ij})_{i,j}$ by setting $a_{ij}=2(i,j)/(i,i)$. Let
$\varepsilon_{i}=(i,i)/2$, then $(\varepsilon_{i})_{i}$ is the
minimal symmetrization. We have the corresponding Kac-Moody algebra
$\mathfrak{g}$.

Let $\mathbb{Q}(v)$ be the function field in one variable $v$ over
$\mathbb{Q}$. The Drinfeld-Jimbo quantum group
$U=U_{q}(\mathfrak{g})$ is defined to be the associative
$\mathbb{Q}(v)$-algebra with generators $E_{i},F_{i},K_{\mu}$,
($i\in I$ and $\mu\in Q$) subject to the relations:
\begin{displaymath}
K_{\nu}K_{\mu}=K_{\mu}K_{\nu}=K_{\mu+\nu}, \ \ \ K_{0}=1,
\end{displaymath}
\begin{displaymath}
K_{\mu}E_{j}=v^{(\mu,j)}E_{j}K_{\mu}, \ \ \
K_{\mu}F_{j}=v^{-(\mu,j)}F_{j}K_{\mu}
\end{displaymath}
\begin{displaymath}
E_{i}F_{j}-F_{j}E_{i}=\delta_{ij}\dfrac{K_{i}-K_{i}^{-1}}{v_{i}-v_{i}^{-1}},
\end{displaymath}
\begin{displaymath}
\sum_{t=0}^{1-a_{ij}}(-1)^{t}\begin{bmatrix} 1-a_{ij}
\\ t \end{bmatrix}_{\varepsilon_{i}} E_{i}^{t}E_{j}E_{i}^{1-a_{ij}-t}=0, (i\neq
j)
\end{displaymath}
\begin{displaymath}
\sum_{t=0}^{1-a_{ij}}(-1)^{t}\begin{bmatrix} 1-a_{ij}
\\ t \end{bmatrix}_{\varepsilon_{i}} F_{i}^{t}F_{j}F_{i}^{1-a_{ij}-t}=0,
(i\neq j).
\end{displaymath}
where $v_{i}=v^{\varepsilon_{i}}$ and we use the notations
\begin{displaymath}
[n]=\dfrac{v^{n}-v^{-n}}{v-v^{-1}}=v^{n-1}+v^{n-3}+\cdots+v^{-n+1},
\end{displaymath}
\begin{displaymath}
[n]!=\prod_{r=1}^{n}[r],\ \ \
\begin{bmatrix} n \\ r \end{bmatrix}=\dfrac{[n]!}{[r]![n-r]!}.
\end{displaymath}
and for any polynomial $f \in \mathbb{Z}[v,v^{-1}]$ and an integer
$a$, we denote by $f_{a}$ the polynomial obtained from $f$ by
replacing $v$ by $v^{a}$.

The following elementary properties of $U$ are well known:

(a) $U$ has a triangular decomposition
\begin{displaymath}
U\cong U^{-}\otimes U^{0}\otimes U^{+},
\end{displaymath}
where $U^{+}$ (resp. $U^{-}$) is the subalgebra of $U$ generated by
$E_{i}$ (resp. $F_{i}$), $i\in I$, and $U^{0}$ is the subalgebra
generated by $K_{i}^{\pm}$,$i\in I$.

(b) $U^{+}$ and $U^{-}$ are $Q_{+}$-graded algebras, i.e.
\begin{displaymath}
U^{+}=\bigoplus_{\nu\in Q_{+}}U^{+}_{\nu}, \ \
U^{-}=\bigoplus_{\nu\in Q_{+}}U^{-}_{-\nu}.
\end{displaymath}
where $U^{\pm}_{\nu}=\{u\in
U^{\pm}|K_{i}uK_{i}^{-1}=v^{\pm(i,\nu)}u, \text{for any}\ \ i\in
I\}$.

(c) $U$ has a Hopf algebra structure (See \cite{l1}).

Lusztig introduced an important family of automorphisms
$T_{i,1}'':U\rightarrow U$ called the \emph{symmetries}. They are
defined by
\begin{displaymath}
T_{i,1}''(E_{i})=-F_{i}K_{i}^{\varepsilon_{i}},
\end{displaymath}
\begin{displaymath}
T_{i,1}''(F_{i})=-K_{i}^{-\varepsilon_{i}}E_{i},
\end{displaymath}
\begin{displaymath}
T_{i,1}''(E_{j})=\sum_{r+s=-a_{ij}}(-1)^{r}v^{-r\varepsilon_{i}}E_{i}^{(s)}E_{j}E_{i}^{(r)},\text{
for } i\neq j
\end{displaymath}
\begin{displaymath}
T_{i,1}''(F_{j})=\sum_{r+s=-a_{ij}}(-1)^{r}v^{r\varepsilon_{i}}F_{i}^{(s)}F_{j}F_{i}^{(r)},\text{
for } i\neq j
\end{displaymath}
\begin{displaymath}
T_{i,1}''(K_{\mu})=K_{s_{i}\mu}.
\end{displaymath}

The inverse of $T_{i,1}''$ is $T_{i,-1}'$ (See \cite{l1}).\\

\textbf{2.2 Crystal bases of $U^{+}$.} \ \  We will briefly recall
the definition of crystal bases following Kashiwara \cite{k}.
However, for later convenience, we will state the results in $U^{+}$
rather than $U^{-}$.

There are two operators $f_{i}'$ and $f_{i}''$ on $U^{+}$. They can
be defined inductively as following:
\begin{displaymath}
f_{i}'(1)=f_{i}''(1)=0
\end{displaymath}
\begin{displaymath}
f_{i}'(E_{j})=\delta_{ij}, \ \
f_{i}'(E_{j}P)=v_{i}^{a_{ij}}E_{j}f_{i}'(P)+\delta_{ij}P,
\end{displaymath}
\begin{displaymath}
f_{i}''(E_{j})=\delta_{ij}, \ \
f_{i}''(E_{j}P)=v_{i}^{-a_{ij}}E_{j}f_{i}''(P)+\delta_{ij}P.
\end{displaymath}

According to \cite{k} we have $U^{+}=\bigoplus_{n\geq
0}E_{i}^{(n)}\ker f_{i}'$. Hence we can define the
$\mathbb{Q}(v)$-linear maps $\tilde{E_{i}}$ and $\tilde{F_{i}}$ of
$U^{+}$ by
\begin{displaymath}
\tilde{E_{i}}(E_{i}^{(n)}u)=E_{i}^{(n+1)}u,
\end{displaymath}
\begin{displaymath}
\tilde{F_{i}}(E_{i}^{(n)}u)=E_{i}^{(n-1)}u.
\end{displaymath}
for any $u\in \ker f_{i}'$.

$\tilde{F_{i}}$ and $\tilde{E_{i}}$ are called \emph{Kashiwara's
operators}.

Let $A=\mathbb{Q}[[v^{-1}]]\cap\mathbb{Q}(v)$. A pair $(L,B)$ is
called a \emph{crystal basis} of $U^{+}$ if it satisfies the
following conditions:

(1) $L$ is a free $A$-submodule of $U^{+}$ such that $U^{+}\cong
\mathbb{Q}(v)\otimes_{A}L$.

(2) $B$ is a basis of the $\mathbb{Q}$-vector space $L/v^{-1}L$.

(3) Let $L_{\nu}=L\cap U^{+}_{\nu}$ and $B_{\nu}=B\cap
(L_{\nu}/v^{-1}L_{\nu})$, we have $L=\bigoplus_{\nu\in Q_{+}}
L_{\nu}$, $B=\bigsqcup_{\nu\in Q_{+}} B_{\nu}$.

(4) $\tilde{E_{i}}L\subset L$ and $\tilde{F_{i}}L\subset L$ for any
$i$. (Therefore $\tilde{E_{i}}$ and $\tilde{F_{i}}$ act on
$L/v^{-1}L$)

(5) $\tilde{F_{i}}B\subset B\cup \{0\}$ and $\tilde{E_{i}}B\subset
B$.

(6) For any $b\in B$ such that $\tilde{F_{i}}b \in B$, we have
$\tilde{E_{i}}\tilde{F_{i}}b=b$.

The following theorem asserts the existence of the crystal basis in
$U^{+}$.

\begin{thm}\label{thm 2 exist crystal basis}
Let $L(\infty)$ be the $A$-submodule of $U^{+}$ generated by
$\tilde{E}_{i_{1}}\tilde{E}_{i_{2}}\cdots \tilde{E}_{i_{l}}\cdot 1$
and $B(\infty)$ be the subset of $L(\infty)/v^{-1}L(\infty)$
consisting of the nonzero vectors of the form
$\tilde{E}_{i_{1}}\tilde{E}_{i_{2}}\cdots \tilde{E}_{i_{l}}\cdot
\bar{1}$.

Then $(L(\infty),B(\infty))$ is the crystal basis of $U^{+}$.
\end{thm}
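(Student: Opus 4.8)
The plan is to establish the existence of the crystal basis $(L(\infty),B(\infty))$ by Kashiwara's grand-loop induction, transplanted from $U^-$ to $U^+$ via the standard symmetry that exchanges the two halves. First I would set up the induction on the grading degree $n=\mathrm{tr}\,\nu$, maintaining at each stage a package of statements: that $L(\infty)_\nu$ is stable under all $\tilde E_i$ and $\tilde F_i$; that $L(\infty)_\nu=\bigoplus_i \tilde E_i(L(\infty)_{\nu-i})$ modulo the contributions detected by $f_i'$; that on $L(\infty)_\nu/v^{-1}L(\infty)_\nu$ the operators $\tilde E_i,\tilde F_i$ satisfy the quasi-inverse relations (5) and (6); and that $B(\infty)_\nu$, defined as the nonzero monomials $\tilde E_{i_1}\cdots\tilde E_{i_l}\bar 1$, is a $\mathbb{Q}$-basis of the quotient. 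The base case $n=0$ is trivial since $L(\infty)_0=A\cdot 1$ and $B(\infty)_0=\{\bar 1\}$.

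The engine of the induction is the decomposition $U^+=\bigoplus_{n\geq 0}E_i^{(n)}\ker f_i'$ recorded just before the theorem, which lets one define $\tilde E_i,\tilde F_i$ unambiguously and, crucially, lets one compare the "$i$-string" structure for two different indices $i\neq j$. The key technical lemma I would isolate is the commutation behavior of $\tilde E_i$ with $f_j'$ and with $\tilde E_j$ on the relevant graded piece: writing any element in terms of its $E_i$-string decomposition and applying the defining recursion for $f_j'$, one shows $f_j'\tilde E_i \equiv v^{\text{(something)}}\tilde E_i f_j'$ up to lower-order corrections, so that $\ker f_j'$ interacts predictably with the $\tilde E_i$-action. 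Combined with the $v\mapsto v^{-1}$-type comparison between $f_i'$ and $f_i''$ (and hence between the $A$-lattice generated by strings and the one cut out by the bilinear form), this yields both the lattice stability (4) and the integrality needed for $B(\infty)$ to descend to a basis. One then runs Kashiwara's "rank two" argument: restricting to the subalgebra generated by $E_i,E_j$, the two $\mathfrak{sl}_2$-structures are compatible enough that the strings through a given $b\in B(\infty)$ are governed by the finite-type (or affine $A_1^{(1)}$, $A_2^{(2)}$) combinatorics, which forces (5) and (6) at degree $n$ given the inductive hypothesis at degree $<n$.

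I would then assemble the pieces: stability of $L(\infty)$ under $\tilde E_i$ is immediate from its definition as the $A$-span of $\tilde E$-monomials; stability under $\tilde F_i$ and the statement that $B(\infty)$ is a basis are exactly what the grand loop delivers; the grading conditions in (3) hold because each $\tilde E_i$ shifts degree by $i$ and $1$ is homogeneous. Finally, uniqueness — that this is \emph{the} crystal basis, not merely \emph{a} crystal basis — follows from the standard argument that any two crystal bases have lattices related by an automorphism fixing things mod $v^{-1}$, together with the fact that $B(\infty)$ is generated from $\bar 1$ by the $\tilde E_i$, so it is the connected component containing the highest-weight element and is thus canonically determined.

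The main obstacle is the rank-two compatibility step: showing that the two $E_i,E_j$-string filtrations on $L(\infty)_\nu/v^{-1}L(\infty)_\nu$ fit together so that applying $\tilde F_j$ to a monomial $\tilde E_{i_1}\cdots\tilde E_{i_l}\bar 1$ lands in $B(\infty)\cup\{0\}$ rather than merely in the lattice. This is the heart of Kashiwara's original argument and cannot be shortcut; everything else (the base case, the grading bookkeeping, the passage from $U^-$ to $U^+$, and uniqueness) is comparatively routine once the defining recursions for $f_i',f_i''$ and the decomposition $U^+=\bigoplus E_i^{(n)}\ker f_i'$ are in hand. In the write-up I would either reproduce Kashiwara's grand loop in the $U^+$ normalization or, more economically, transport his $U^-$ result through the anti-automorphism $E_i\mapsto F_i$ and cite \cite{k} for the detailed rank-two verification, checking only that the normalizations of $\tilde E_i,\tilde F_i$ and of $A=\mathbb{Q}[[v^{-1}]]\cap\mathbb{Q}(v)$ match up.
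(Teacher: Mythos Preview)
Your outline is a faithful sketch of Kashiwara's grand-loop argument, and in content it is the correct approach. However, the paper does not supply its own proof of this theorem: Theorem~\ref{thm 2 exist crystal basis} is stated in the review section as a result of Kashiwara and is cited from~\cite{k} without argument. So there is no paper-proof to compare against; your proposal simply reconstructs the proof given in the cited reference (transported to $U^+$), which is exactly what the paper relies on.
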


\textbf{2.3 A characterization of $B(\infty)$.} \ \ Kashiwara has
given a nice characterization of the crystal basis using the
$\mathbb{Z}$-form and a paring $(-,-)_{K}$.

\begin{prop}\label{prop 2 Kashiwara form}
(a) There is a unique non-degenerate symmetric
$\mathbb{Q}(v)$-bilinear pairing $(-,-)_{K}$ on $U^{+}$ such that
\begin{displaymath}
(1,1)_{K}=1,
\end{displaymath}
\begin{displaymath}
(E_{i}x,y)_{K}=(x,f_{i}'(y))_{K}.
\end{displaymath}
(b) We have $(L(\infty),L(\infty))_{K}\subset A$.
\end{prop}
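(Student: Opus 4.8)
The plan is to prove (a) by first fixing the form on the $Q_+$-graded pieces and then importing non-degeneracy as a known structural fact, and to deduce (b) from (a) together with Theorem~\ref{thm 2 exist crystal basis} by an induction on the height of the weight.

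For uniqueness in (a), one checks that any form with the stated properties is $Q_+$-graded, $(U^+_\nu,U^+_{\nu'})_K=0$ unless $\nu=\nu'$: for $\nu\neq 0$ write $x\in U^+_\nu$ as $x=\sum_iE_ix_i$ with $x_i\in U^+_{\nu-i}$ (possible since $U^+_\nu=\sum_iE_iU^+_{\nu-i}$), so that $(x,y)_K=\sum_i(x_i,f_i'(y))_K$; as $f_i'$ lowers degree by $i$, an induction on $\mathrm{ht}\,\nu+\mathrm{ht}\,\nu'$ reduces to the base $\nu=0$, handled by $f_i'(1)=0$. The same identity then forces the restriction of $(-,-)_K$ to $U^+_\nu\times U^+_\nu$ in terms of its values on pieces of strictly smaller height, so the form is unique if it exists.

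For existence I would build the form first on the free algebra $\mathbf F=\mathbb Q(v)\langle E_i:i\in I\rangle$, where $\mathbf F_\nu=\bigoplus_iE_i\mathbf F_{\nu-i}$ is a genuine direct sum, so that the recipe $(x,y):=\sum_i(x_i,f_i'(y))$ is unambiguous and defines inductively a bilinear form with $(1,1)=1$ and $(E_ix,y)=(x,f_i'(y))$, symmetry being checked by a parallel double induction. A direct computation puts the quantum Serre elements in the radical of this form, so it descends to $\mathbf F$ modulo the Serre ideal, i.e. to $U^+$. Non-degeneracy is the one genuinely deep point: it is equivalent to the radical of the form on $\mathbf F$ being exactly the Serre ideal, i.e. to the identification of $U^+$ with Lusztig's algebra $\mathbf f$, which is the $q$-analogue of the Gabber--Kac theorem proved in \cite{l1}; I would simply quote it, and I expect this to be the main obstacle in any self-contained treatment.

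For (b), by Theorem~\ref{thm 2 exist crystal basis} the space $L(\infty)_\nu$ is spanned over $A$ by the weight-$\nu$ vectors $\tilde{E}_{i_1}\cdots\tilde{E}_{i_l}\cdot 1$, hence $L(\infty)_\nu=\sum_i\tilde{E}_i(L(\infty)_{\nu-i})$ for $\nu\neq 0$, and I would prove $(L(\infty)_\nu,L(\infty)_\nu)_K\subset A$ by induction on $\mathrm{ht}\,\nu$, the case $\nu=0$ being $(1,1)_K=1$. Two ingredients are needed: first, the $i$-string decomposition $U^+=\bigoplus_{p\geq 0}E_i^{(p)}\ker f_i'$ is compatible with $L(\infty)$, because if $z=\sum_pE_i^{(p)}u_p$ with $u_p\in\ker f_i'$ then $u_p=\tilde{F}_i^{\,p}z-\tilde{E}_i\tilde{F}_i^{\,p+1}z$, which lies in $L(\infty)$ by crystal axiom~(4), so $u_p\in\ker f_i'\cap L(\infty)$ of weight $\mathrm{wt}(z)-pi$; second, iterating the adjunction gives $(E_i^{(p)}u,z)_K=\tfrac{1}{[p]_i!}(u,(f_i')^pz)_K$, and with the identity $f_i'(E_i^{(m)}w)=v_i^{m-1}E_i^{(m-1)}w$ for $w\in\ker f_i'$ (an easy induction on $m$), $f_i'(w)=0$, and symmetry of the form, one obtains for $u,w\in\ker f_i'$ the orthogonality $(E_i^{(p)}u,E_i^{(q)}w)_K=\delta_{pq}\,c_p\,(u,w)_K$ with $c_p=v_i^{p(p-1)/2}/[p]_i!\in A$ (note that $[k]_i=v_i^{k-1}(1+v_i^{-2}+\cdots+v_i^{-2(k-1)})$ is $v_i^{k-1}$ times a unit of $A$). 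Now for $x\in L(\infty)_{\nu-i}$ and $b\in L(\infty)_\nu$, expand $x=\sum_pE_i^{(p)}u_p$ and $b=\sum_qE_i^{(q)}w_q$ with $u_p,w_q\in\ker f_i'\cap L(\infty)$; then $\tilde{E}_ix=\sum_{n\geq 1}E_i^{(n)}u_{n-1}$, and orthogonality of distinct strings gives $(\tilde{E}_ix,b)_K=\sum_{n\geq 1}c_n(u_{n-1},w_n)_K$, where $u_{n-1}$ and $w_n$ both have weight $\nu-ni$, of strictly smaller height than $\nu$, so $(u_{n-1},w_n)_K\in A$ by the inductive hypothesis while $c_n\in A$; hence $(\tilde{E}_ix,b)_K\in A$. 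Since such products span $L(\infty)_\nu$ over $A$ and the form is symmetric, the induction closes and $(L(\infty),L(\infty))_K\subset A$.
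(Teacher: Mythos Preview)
The paper does not supply its own proof of this proposition; it is stated as a background result attributed to Kashiwara~\cite{k}. Your argument is therefore not being compared against anything in the paper, but it is correct and worth a brief comment.

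For (a), your uniqueness-by-height and existence-on-the-free-algebra approach is the standard one; you are right to isolate non-degeneracy as the deep input and to quote it from \cite{l1}.

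For (b), your proof is sound. The two ingredients you use---that the $i$-string decomposition $z=\sum_p E_i^{(p)}u_p$ is compatible with $L(\infty)$ via the formula $u_p=\tilde F_i^{\,p}z-\tilde E_i\tilde F_i^{\,p+1}z$, and the orthogonality $(E_i^{(p)}u,E_i^{(q)}w)_K=\delta_{pq}\,c_p\,(u,w)_K$ for $u,w\in\ker f_i'$ with $c_p=v_i^{p(p-1)/2}/[p]_i!\in A^\times$---are both correct and easy to verify as you indicate. Your induction on $\mathrm{ht}\,\nu$ then closes cleanly. One point worth making explicit is the logical structure: in Kashiwara's original paper Theorem~\ref{thm 2 exist crystal basis} and Proposition~\ref{prop 2 Kashiwara form}(b) are established simultaneously within the ``grand loop'' induction, so neither is available as an independent input to the other there. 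Your argument instead takes Theorem~\ref{thm 2 exist crystal basis} as a black box (as the present paper does) and derives (b) from it a posteriori; this is perfectly legitimate in the context of this paper, and indeed gives a short self-contained route to (b) once the crystal basis is known to exist.
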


Part (b) of this proposition implies that the form $(-,-)_{K}$
induces a $\mathbb{Q}$-bilinear form $(-,-)_{K,0}$ on
$L(\infty)/v^{-1}L(\infty)$:
\begin{displaymath}
(x+v^{-1}L(\infty),y+v^{-1}L(\infty))_{K,0}=(x,y)_{K}+v^{-1}A
\end{displaymath}
for any $x,y\in L(\infty)$.

\begin{prop}\label{prop 2 char of L}
(a) For any $b_{1},b_{2}\in B(\infty)$,
$(b_{1},b_{2})=\delta_{b_{1}b_{2}}$, i.e. $B(\infty)$ is an
orthogonal normal basis of $L(\infty)/v^{-1}L(\infty)$ with respect
to the form $(-,-)_{K,0}$. In particular, $(-,-)_{K,0}$ is positive
definite.

(b) $L(\infty)=\{u\in U^{+}|(u,u)_{K}\in A\}$.
\end{prop}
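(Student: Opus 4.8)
The plan is to establish both parts by exploiting the triangular decomposition $U^{+}=\bigoplus_{n\geq 0}E_{i}^{(n)}\ker f_{i}'$ together with the compatibility of the pairing $(-,-)_{K}$ with Kashiwara's operators, proceeding by induction on the weight $\nu\in Q_{+}$. For the inductive step one first records the key commutation identity relating the form, multiplication by $E_{i}^{(n)}$, and the operators $f_{i}'$: for $u,u'\in\ker f_{i}'$ one has $(E_{i}^{(m)}u,E_{i}^{(n)}u')_{K}=\delta_{mn}\,c_{m,i}\,(u,u')_{K}$ for an explicit scalar $c_{m,i}\in 1+v^{-1}A$ (coming from $(E_{i}^{(m)},E_{i}^{(m)})_{K}=\prod_{s=1}^{m}\frac{1}{1-v_{i}^{-2s}}$), so that the decomposition $U^{+}_{\nu}=\bigoplus_{n}E_{i}^{(n)}(\ker f_{i}')_{\nu-ni}$ is orthogonal and the form on each summand is, up to the unit $c_{n,i}$, the form on a strictly smaller weight space. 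This reduces everything to understanding $(-,-)_{K}$ on $\ker f_{i}'$, which is the content Kashiwara exploits.

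For part (a): by Theorem \ref{thm 2 exist crystal basis} every $b\in B(\infty)_{\nu}$ is of the form $\tilde{E}_{i}^{n}b'$ with $b'\in B(\infty)_{\nu-ni}\cap\ker\tilde f_{i}$ for a suitable $i$ and maximal $n$ (so $b'$ lifts to an element of $\ker f_{i}'$); given two basis elements $b_{1},b_{2}$, if they have different leading exponents with respect to $\tilde E_{i}$ the orthogonality identity above gives $(b_{1},b_{2})_{K,0}=0$ immediately, and if they share the exponent $n$ we peel off $E_{i}^{(n)}$ and invoke the inductive hypothesis on $\ker f_{i}'$ at the smaller weight, using $c_{n,i}\equiv 1\bmod v^{-1}A$ to see the scalar disappears in $L(\infty)/v^{-1}L(\infty)$. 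That $B(\infty)$ is orthonormal then says the Gram matrix of $(-,-)_{K,0}$ in this basis is the identity, hence $(-,-)_{K,0}$ is positive definite, proving the last assertion of (a).

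For part (b): the inclusion $L(\infty)\subset\{u\mid (u,u)_{K}\in A\}$ is Proposition \ref{prop 2 Kashiwara form}(b). For the reverse, suppose $u\in U^{+}_{\nu}$ with $(u,u)_{K}\in A$; write $u$ in the $A$-basis of $L(\infty)_{\nu}$ lifting $B(\infty)_{\nu}$ with coefficients in $\mathbb{Q}(v)$, clear denominators to write $u=v^{-N}u_0$ with $u_0\in L(\infty)$ and $\bar u_0\neq 0$ in $L(\infty)/v^{-1}L(\infty)$, and choose $N$ minimal (i.e.\ so that not all coefficients lie in $v^{-1}A$). Then $(u,u)_{K}=v^{-2N}(u_0,u_0)_{K}$, and by part (a) the leading term $(u_0,u_0)_{K}\bmod v^{-1}A$ equals $\sum|c_b|^2$ over the leading coefficients, which is a nonzero rational since $(-,-)_{K,0}$ is positive definite; hence $(u_0,u_0)_{K}$ is a unit in $A$, forcing $N\leq 0$, i.e.\ $u\in L(\infty)$.

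The main obstacle is the precise bookkeeping in the inductive step — keeping track of the scalars $c_{n,i}$ and verifying that they are units congruent to $1$ modulo $v^{-1}A$, and checking that the leading-exponent decomposition of a crystal basis element with respect to the various $\tilde E_i$ is well enough behaved that the case analysis in (a) is exhaustive and consistent across different choices of $i$; this is exactly the delicate combinatorial heart of Kashiwara's grand loop argument, and in a self-contained write-up one would either reproduce it or cite \cite{k} for it.
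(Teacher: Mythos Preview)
The paper does not supply a proof of this proposition at all: Section~2 is a review of Kashiwara's theory, and Propositions~\ref{prop 2 Kashiwara form}, \ref{prop 2 char of L}, and \ref{prop 2 char of B} are simply quoted from \cite{k} as background facts. So there is no ``paper's own proof'' to compare against; the intended proof is Kashiwara's.

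Your sketch is a faithful outline of that original argument. The key orthogonality $(E_{i}^{(m)}u,E_{i}^{(n)}u')_{K}=\delta_{mn}\,c_{m,i}\,(u,u')_{K}$ for $u,u'\in\ker f_{i}'$, with $c_{m,i}\in 1+v^{-1}A$, is exactly what drives the induction in \cite{k}; the reduction of part~(b) to the positive definiteness in part~(a) via a valuation-and-leading-term argument is likewise standard. One small slip: your displayed value $(E_{i}^{(m)},E_{i}^{(m)})_{K}=\prod_{s=1}^{m}(1-v_{i}^{-2s})^{-1}$ is off by a factor; a direct computation from $(E_{i}x,y)_{K}=(x,f_{i}'(y))_{K}$ gives $\prod_{s=1}^{m}\dfrac{1-v_{i}^{-2}}{1-v_{i}^{-2s}}$ (compare with the Ringel-form analogue in Lemma~\ref{lem 8 Ringel form}, which carries the extra $(1-v_{i}^{-2})^{-1}$). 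This does not affect your argument, since the product is still a unit of $A$ congruent to $1$ modulo $v^{-1}A$. Your final caveat is apt: the genuine work---showing the decomposition behaves coherently across all $i$ simultaneously---is Kashiwara's grand loop, and citing \cite{k} for it, as the paper does, is the honest move.
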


Set $E_{i}^{(n)}=E_{i}^{n}/[n]_{\epsilon_{i}}!$,
$F_{i}^{(n)}=F_{i}^{n}/[n]_{\epsilon_{i}}!$. Denote by
$U_{\mathbb{Z}}$ the $\mathbb{Z}[v,v^{-1}]$-subalgebra of $U$
generated by $F_{i}^{(n)}$, $E_{i}^{(n)}$ and $K_{\mu}$, $(i\in I, \
\mu \in Q)$. Let $U_{\mathbb{Z}}^{+}$ (resp. $U_{\mathbb{Z}}^{-}$)
be the $\mathbb{Z}[v,v^{-1}]$-subalgebra of $U^{+}$ (resp. $U^{-}$)
generated by $E_{i}^{(n)}$ (resp. $F_{i}^{(n)}$). Then it is easy to
see that
\begin{displaymath}
U_{\mathbb{Z}}^{-}=U_{\mathbb{Z}}\cap U^{-},\ \ \
U_{\mathbb{Z}}^{+}=U_{\mathbb{Z}}\cap U^{+}.
\end{displaymath}

Lusztig's symmetries also act nicely on $U_{\mathbb{Z}}$, so
actually $T_{i,1}''$ and $T_{i,-1}'$ are automorphisms on
$U_{\mathbb{Z}}$. (See \cite{l1}, 37.1.3)

We have $U_{\mathbb{Z}}^{+}$ is stable under $f_{i}'$ and
Kashiwara's operators $\tilde{E_{i}}$, $\tilde{F_{i}}$.

Set $L_{\mathbb{Z}}(\infty)=L(\infty)\cap U_{\mathbb{Z}}^{+}$, then
$L_{\mathbb{Z}}(\infty)$ is stable under $\tilde{E_{i}}$ and
$\tilde{F_{i}}$.

\begin{prop}\label{prop 2 char of B}
(a) $(-,-)_{K,0}$ is $\mathbb{Z}$-valued on
$L_{\mathbb{Z}}(\infty)/v^{-1}L_{\mathbb{Z}}(\infty)$.

(b) $L_{\mathbb{Z}}(\infty)/v^{-1}L_{\mathbb{Z}}(\infty)$ is a free
$\mathbb{Z}$-module with $B(\infty)$ as a basis.

(c) $B(\infty)\cup(-B(\infty))=\{u\in
L_{\mathbb{Z}}(\infty)/v^{-1}L_{\mathbb{Z}}(\infty)|(u,u)_{K,0}=1\}$.
\end{prop}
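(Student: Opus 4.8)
The plan is to deduce all three parts from the results already established for $B(\infty)$ and $L(\infty)$ over $A$ in Propositions \ref{prop 2 char of L} and \ref{prop 2 Kashiwara form}, together with the integrality of the Kashiwara pairing on the $\mathbb{Z}$-form. For part (a), I would first check that the pairing $(-,-)_K$ takes values in $\mathbb{Z}[v,v^{-1}]$ on $U_{\mathbb{Z}}^{+}$. This follows by induction on the grading $\nu\in Q_+$, using the defining property $(E_ix,y)_K=(x,f_i'(y))_K$: since $U_{\mathbb{Z}}^{+}$ is stable under $f_i'$ and is generated by the divided powers $E_i^{(n)}$, and since $(E_i^{(n)}x,y)_K$ can be rewritten in terms of $(x,(f_i')^{(n)}y)_K$ with the appropriate divided-power correction from $[n]_{\varepsilon_i}!$, all pairings of $\mathbb{Z}[v,v^{-1}]$-algebra generators land in $\mathbb{Z}[v,v^{-1}]$. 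Combining this with part (b) of Proposition \ref{prop 2 Kashiwara form}, namely $(L(\infty),L(\infty))_K\subset A$, gives $(L_{\mathbb{Z}}(\infty),L_{\mathbb{Z}}(\infty))_K\subset A\cap\mathbb{Z}[v,v^{-1}]=\mathbb{Z}[v^{-1}]$, so the induced form on $L_{\mathbb{Z}}(\infty)/v^{-1}L_{\mathbb{Z}}(\infty)$ is $\mathbb{Z}$-valued.

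For part (b), the key point is that $B(\infty)$, being built from $\tilde{E}_{i_1}\cdots\tilde{E}_{i_l}\cdot\bar 1$ and since $U_{\mathbb{Z}}^{+}$ is stable under the Kashiwara operators $\tilde{E}_i$, actually consists of classes of elements of $L_{\mathbb{Z}}(\infty)$; hence $B(\infty)\subset L_{\mathbb{Z}}(\infty)/v^{-1}L_{\mathbb{Z}}(\infty)$. It is already a $\mathbb{Q}$-basis of $L(\infty)/v^{-1}L(\infty)$ by Theorem \ref{thm 2 exist crystal basis}. What remains is to show it spans $L_{\mathbb{Z}}(\infty)/v^{-1}L_{\mathbb{Z}}(\infty)$ over $\mathbb{Z}$: one argues grading piece by grading piece that $L_{\mathbb{Z}}(\infty)_\nu$ is generated over $\mathbb{Z}[v,v^{-1}]$ (hence over $A\cap\mathbb{Z}[v,v^{-1}]$) by lifts of elements of $B_\nu$, using the inductive construction $L_{\mathbb{Z}}(\infty)=\sum_n E_i^{(n)}(\ker f_i'\cap L_{\mathbb{Z}}(\infty))$ and induction on $\nu$. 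Linear independence over $\mathbb{Z}$ is inherited from independence over $\mathbb{Q}$.

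For part (c), the inclusion $\supseteq$ is the substantive direction. Given $u\in L_{\mathbb{Z}}(\infty)/v^{-1}L_{\mathbb{Z}}(\infty)$ with $(u,u)_{K,0}=1$, expand $u=\sum_{b\in B(\infty)}c_b\,b$ with $c_b\in\mathbb{Z}$ by part (b); since $B(\infty)$ is orthonormal for $(-,-)_{K,0}$ by Proposition \ref{prop 2 char of L}(a), we get $\sum_b c_b^2=1$ with $c_b\in\mathbb{Z}$, forcing exactly one $c_b=\pm1$ and the rest zero, i.e. $u\in B(\infty)\cup(-B(\infty))$. The reverse inclusion $\subseteq$ is immediate from orthonormality. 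I expect the main obstacle to be part (b): establishing that $B(\infty)$ genuinely generates the $\mathbb{Z}$-form $L_{\mathbb{Z}}(\infty)/v^{-1}L_{\mathbb{Z}}(\infty)$ and not merely a finite-index sublattice requires a careful inductive argument tracking the divided-power integral structure through the decomposition $U^+=\bigoplus_n E_i^{(n)}\ker f_i'$ and verifying compatibility with $U_{\mathbb{Z}}^{+}$ at each step, essentially a reprise of Kashiwara's integrality argument transported to $U^+$.
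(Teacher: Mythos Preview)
The paper does not supply its own proof of this proposition: it is stated in the background Section~\ref{section 2} as a result of Kashiwara (see the sentence opening~2.3 and the reference~\cite{k}), so there is no argument in the paper to compare your proposal against.

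That said, your sketch is largely sound, with one simplification worth noting. Your plan for part~(b) --- an inductive argument tracking the divided-power integral structure through the decomposition $U^{+}=\bigoplus_{n}E_{i}^{(n)}\ker f_{i}'$ --- is more work than necessary. Once part~(a) is established and you have observed (as you do) that $B(\infty)\subset L_{\mathbb{Z}}(\infty)/v^{-1}L_{\mathbb{Z}}(\infty)$, part~(b) follows immediately from orthonormality: the natural map $L_{\mathbb{Z}}(\infty)/v^{-1}L_{\mathbb{Z}}(\infty)\to L(\infty)/v^{-1}L(\infty)$ is injective (since $L_{\mathbb{Z}}(\infty)\cap v^{-1}L(\infty)=v^{-1}L_{\mathbb{Z}}(\infty)$), so any $u\in L_{\mathbb{Z}}(\infty)/v^{-1}L_{\mathbb{Z}}(\infty)$ expands as $u=\sum_{b}c_{b}\,b$ with $c_{b}\in\mathbb{Q}$; but then $c_{b}=(u,b)_{K,0}\in\mathbb{Z}$ by part~(a). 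This is exactly the mechanism you already invoke for part~(c), and it removes the ``main obstacle'' you anticipated. The genuine technical point that remains is the one you flag in part~(a): verifying that $(-,-)_{K}$ is $\mathbb{Z}[v,v^{-1}]$-valued on $U_{\mathbb{Z}}^{+}$ requires knowing that the divided powers $(f_{i}')^{(n)}=(f_{i}')^{n}/[n]!_{\varepsilon_{i}}$ (not merely $f_{i}'$ itself) preserve $U_{\mathbb{Z}}^{+}$, which is true but is a separate integrality statement that the paper only asserts for $f_{i}'$.
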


\section{Hall algebras and Drinfeld double}\label{section 3}

\textbf{3.1 The Hall algebra of a hereditary algebra.} \ \ Let
$\Lambda$ be a finite-dimensional hereditary $k$-algebra where $k$
is a finite field of $q$ elements. Denote the set of isomorphism
classes of finite-dimensional $\Lambda$-modules by $\mathcal{P}$. We
can choose a representative $V_{\alpha}\in\alpha$ for each
$\alpha\in \mathcal{P}$. Given any $\Lambda$-modules $M$ and $N$, we
have the Euler form:
\begin{displaymath}
\langle M,N \rangle =
\dim_{k}\Hom_{\Lambda}(M,N)-\dim_{k}\Ext_{\Lambda}(M,N).
\end{displaymath}
$\langle M,N \rangle$ depends only on the dimension vectors $ \dimv
M$ and $\dimv N$ since $\Lambda$ is hereditary, so we write $\langle
\alpha,\beta \rangle =\langle V_{\alpha},V_{\beta} \rangle$. The
Euler symmetric form $(-,-)$ is given by $(\alpha,\beta)=\langle
\alpha,\beta\rangle+\langle\beta,\alpha\rangle$. So the Euler form
and the Euler symmetric form are both defined on $\mathbb{Z}[I]$
where $I$ is the set of isomorphism classes of simple
$\Lambda$-modules. Then $\Delta=(I,(-,-))$ is a Cartan datum and any
Cartan datum can be realized by the Euler symmetric form of a
finite-dimensional hereditary $k$-algebra (See \cite{r4}).

For $\alpha,\beta,\lambda \in \mathcal{P}$, let
$g^{\lambda}_{\alpha\beta}$ be the number of submodules $B$ of
$V_{\lambda}$ such that $B\cong V_{\beta}$ and $V_{\lambda}/B \cong
V_{\alpha}$.

Let $v=\sqrt{q}$, the \emph{Hall algebra} of $\Lambda$ is a free
$\mathbb{Q}(v)$-module whose basis consists of isomorphism classes
of $\Lambda$-modules with multiplication defined as
$$u_{\alpha}u_{\beta}=v^{\langle \alpha,\beta
\rangle}\sum_{\lambda\in\mathcal{P}}g_{\alpha\beta}^{\lambda}u_{\lambda},\
\text{for all}\ \alpha,\beta\in \mathcal{P}.$$

We know $\mathscr{H}(\Lambda)$ is an associative
$\mathbb{N}[I]$-graded $\mathbb{Q}(v)$-algebra with the identity
element $u_{0}$ and the grading
$\mathscr{H}(\Lambda)=\bigoplus_{r\in\mathbb{N}[I]}\mathscr{H}_{r}$
where $\mathscr{H}_{r}$ is the $\mathbb{Q}(v)$-span of the set
$\{u_{\lambda}|\lambda\in \mathcal{P},\dimv V_{\lambda}=r\}$. The
$\mathbb{Q}(v)$-subalgebra $\mathscr{C}(\Lambda)$ generated by
$u_{i},i\in I$ is called the \emph{composition algebra} of
$\Lambda$.

In this paper we are dealing with exceptional modules. A
$\Lambda$-module $V_{\lambda} \ (\lambda\in\mathcal{P})$ is called
\emph{exceptional} if $\Ext_{\Lambda}(V_{\lambda},V_{\lambda})=0$,
i.e. $V_{\lambda}$ has no self-extension. For any exceptional module
$V_{\lambda}$, we set
$u^{(t)}_{\lambda}=(1/[t]!_{\varepsilon(\lambda)})u^{t}_{\lambda}$
in the Hall algebra, where
$\varepsilon(\lambda)=\dim_{k}\End_{\Lambda}V_{\lambda}$. We have
the following identities:
$u^{(t)}_{\lambda}=(v^{\varepsilon(\lambda)})^{t(t-1)}u_{t\lambda}$,
where $u_{t\lambda}$ corresponds to the direct sum of $t$ copies of
$V_{\lambda}$.

Now fix a Cartan datum $\Delta$. Let $\overline{k}$ be the algebraic
closure of $k$ and for any $n\in \mathbb{N}$, $F(n)$ be a subfield
of $\overline{k}$ such that $[F(n):k]=n$ .Then we have a
finite-dimensional hereditary $F(n)$-algebra $\Lambda(n)$
corresponding to $\Delta$. Thus we have a series of Hall algebras
$\mathscr{H}=\mathscr{H}(\Lambda(n))$. Define a new ring
$\Pi=\prod_{n>0}\mathscr{H}_{n}$, then $v=(v_{n})_{n}\in\Pi$ where
$v_{n}=\sqrt{|F(n)|}=\sqrt{q^{n}}$. Obviously $v$ is in the center
of $\Pi$ and transcendental over $\mathbb{Q}$. Denote
$u_{i}=(u_{i}(n))_{n}\in\Pi$ where $u_{i}(n)$ is the element of
$\mathscr{H}(\Lambda(n))$ corresponding to the simple
$\Lambda(n)$-module which lies in the class $i\in I$. The subring of
$\Pi$ generated by the elements $v$, $v^{-1}$ and $u_{i}(i\in I)$,
hence a $\mathbb{Q}(v)$-algebra, is called the \emph{generic
composition algebra} of the Cartan datum $\Delta$. We will denote it
by $\mathscr{C}(\Delta)$.

On the other hand, we have $U^{+}$, the positive part of the
Drinfeld-Jimbo quantum group corresponding to the Cartan datum
$\Delta$. By Green \cite{g} and Ringel \cite{r1} we know that
$\mathscr{C}(\Delta)$ is isomorphic to $U^{+}$ as associative
$\mathbb{Q}(v)$-algebras, where $u_{i}$ is sent to $E_{i}$ for each
$i\in I$. Therefore, corresponding to the $\mathbb{Z}$-form of
quantum groups, we can define $\mathscr{C}_{\mathbb{Z}}(\Delta)$ to
be the $\mathbb{Z}[v,v^{-1}]$-subalgebra of $\mathscr{C}(\Delta)$
generated by $u^{(t)}_{i}, i\in I, t\in\mathbb{N}$, which will be
called the \emph{integral generic composition algebra} of the Cartan
datum $\Delta$. Obviously $\mathscr{C}_{\mathbb{Z}}(\Delta)$ is
isomorphic to $U^{+}_{\mathbb{Z}}$.\\

\textbf{3.2 The Drinfeld double.} \ \ In the Hall algebra
$\mathscr{H}(\Lambda)$, we write $\langle u_{\alpha} \rangle =
v^{-\dim_{k} V_{\alpha}+\varepsilon(\alpha)}u_{\alpha}$ for each
$\alpha \in \mathcal{P}$. Here $\varepsilon(\alpha)=\langle \alpha,
\alpha \rangle$, which is equal to
$\dim_{k}\End_{\Lambda}V_{\alpha}$ when $V_{\alpha}$ is exceptional.
Then $\mathscr{H}(\Lambda)$ can be viewed as a  free
$\mathbb{Q}(v)$-algebra with basis $\langle u_{\alpha} \rangle,
\alpha \in \mathcal{P}$. The multiplication formula can be replaced
by
\begin{displaymath}
\langle u_{\alpha}\rangle \langle u_{\beta}\rangle = v^{-\langle
\beta,\alpha\rangle}\sum_{\lambda\in\mathcal{P}}g^{\lambda}_{\alpha\beta}\langle
u_{\lambda}\rangle \ \ \ \textmd{for all } \alpha, \beta \in
\mathcal{P}
\end{displaymath}

Now we introduce the extended Hall algebra $\mathcal{H}(\Lambda)$ by
adding a torus to $\mathscr{H}(\Lambda)$. Let $\mathcal{H}(\Lambda)$
be the free $\mathbb{Q}(v)$-module with the basis
$$\{K_{\alpha}\langle u_{\lambda}\rangle| \alpha \in \mathbb{Z}[I],\lambda\in \mathcal{P}\}.$$
and extend the multiplication by
\begin{gather*}
K_{\alpha}\langle u_{\beta} \rangle = v^{(\alpha,\beta)}\langle u_{\beta}\rangle K_{\alpha}\ \ \  \textmd{for all } \alpha \in \mathbb{Z}[I], \beta \in \mathcal{P}\\
K_{\alpha}K_{\beta}=K_{\alpha+\beta} \ \ \ \textmd{for all }
\alpha,\beta \in \mathbb{Z}[I]
\end{gather*}
Moreover, $\mathcal{H}(\Lambda)$ has been equipped with a Hopf
algebra structure by Green's comultiplication and an antipode (See
\cite{g}, \cite{x}).

Let $\mathcal{H}^{+}(\Lambda)$ be the Hopf algebra
$\mathcal{H}(\Lambda)$ above but we write $\langle
u^{+}_{\lambda}\rangle $ for $\langle u_{\lambda}\rangle $ for all
$\lambda\in \mathcal{P}$. Dually, we can define
$\mathcal{H}^{-}(\Lambda)$ to be the free $\mathbb{Q}(v)$-module
with the basis $ \{K_{\alpha}\langle u^{-}_{\lambda} \rangle |\alpha
\in \mathbb{Z}[I],\lambda \in \mathcal{P}\}$.
$\mathcal{H}^{-}(\Lambda)$ has a similar Hopf algebra structure (See
\cite{x} or \cite{xy}).

In view of \cite{x}, we obtain the \emph{the reduced Drinfeld
double} $\mathcal{D}(\Lambda)$ coming from a Hopf algebra structure
of $\mathcal{H}^{+}(\Lambda)\otimes \mathcal{H}^{-}(\Lambda)$, by
means of a skew Hopf paring on $\mathcal{H}^{+}(\Lambda)\times
\mathcal{H}^{-}(\Lambda)$. Then there exists the reduced Drinfeld
double $\mathcal{D}_{\mathscr{C}}(\Delta)$ of the generic
composition algebra which is generated by $u^{\pm}_{i},i \in I$, and
$K_{\alpha}, \alpha\in \mathbb{Z}[I]$. Then
$\mathcal{D}_{\mathscr{C}}(\Delta)$ has the triangular decomposition
$\mathcal{D}_{\mathscr{C}}(\Delta)=\mathscr{C}^{-}(\Delta)\otimes T
\otimes \mathscr{C}^{+}(\Delta)$, where $\mathscr{C}^{-}(\Delta)$ is
the subalgebra generated by $u^{-}_{i},i \in I$,
$\mathscr{C}^{+}(\Delta)$ the subalgebra generated by $u^{+}_{i},i
\in I$, and $T$ the torus algebra.

\begin{thm}\label{thm 3 Drinfeld d isom quantum group}
(See \cite{x}) \ The map $\theta :
\mathcal{D}_{\mathscr{C}}(\Delta)\rightarrow U$ by sending
\begin{displaymath}
\langle u^{+}_{i} \rangle \rightarrow E_{i}, \langle
u^{-}_{i}\rangle \rightarrow -v^{\varepsilon(i)}F_{i} , K_{i}
\rightarrow K_{i}^{\varepsilon(i)}
\end{displaymath}
for all $i \in I$ induces an isomorphism as Hopf
$\mathbb{Q}(v)$-algebras.
\end{thm}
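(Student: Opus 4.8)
The plan is to verify that $\theta$ respects all the defining relations of $U$ and then to check bijectivity via the triangular decompositions on both sides. First I would treat the ``positive'' part: the composition subalgebra $\mathscr{C}^{+}(\Delta)$ is, by the Ringel--Green theorem recalled in Section~\ref{section 3}, isomorphic to $U^{+}$ under $\langle u^{+}_{i}\rangle\mapsto E_{i}$ (note that the twisted multiplication $\langle u_{\alpha}\rangle\langle u_{\beta}\rangle=v^{-\langle\beta,\alpha\rangle}\sum g^{\lambda}_{\alpha\beta}\langle u_{\lambda}\rangle$ is exactly the one for which the quantum Serre relations hold on the $u_{i}$). Dually, $\mathscr{C}^{-}(\Delta)\cong U^{-}$ under $\langle u^{-}_{i}\rangle\mapsto -v^{\varepsilon(i)}F_{i}$; the sign and the power of $v$ are precisely what is needed so that the Serre relations for the $F_{i}$'s (which carry signs $(-1)^{t}$ and factors $v^{t}$) come out right. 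The torus part is immediate since $K_{\alpha}K_{\beta}=K_{\alpha+\beta}$ matches $K_{\mu}K_{\nu}=K_{\mu+\nu}$, and $K_{i}\mapsto K_{i}^{\varepsilon(i)}$ is compatible with the commutation relations $K_{\alpha}\langle u_{\beta}\rangle=v^{(\alpha,\beta)}\langle u_{\beta}\rangle K_{\alpha}$ because $(\varepsilon_{i}i,\nu)=\varepsilon_{i}a_{i\nu}$ realizes the exponent $v^{(\mu,j)}$ appearing in $K_{\mu}E_{j}=v^{(\mu,j)}E_{j}K_{\mu}$.

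The main step is the mixed relation $E_{i}F_{j}-F_{j}E_{i}=\delta_{ij}(K_{i}-K_{i}^{-1})/(v_{i}-v_{i}^{-1})$. On the Hall side this is encoded in the skew Hopf pairing between $\mathcal{H}^{+}(\Lambda)$ and $\mathcal{H}^{-}(\Lambda)$ used to form the reduced Drinfeld double: by construction of the double, the commutator $[\langle u^{+}_{i}\rangle,\langle u^{-}_{j}\rangle]$ is expressible through the pairing values $(\langle u^{+}_{i}\rangle,\langle u^{-}_{j}\rangle)$ and produces, for $i=j$, a term proportional to $K_{i}-K_{i}^{-1}$. I would compute this pairing explicitly on simple modules, using Green's formula for the pairing of $\langle u_{i}\rangle$ with $\langle u_{i}\rangle$, and check that the resulting scalar is $1/(v_{i}-v_{i}^{-1})$ after applying $\theta$; for $i\neq j$ the modules $S_{i}$ and $S_{j}$ are ``orthogonal'' in the pairing (no common composition factors), so the commutator vanishes, matching $\delta_{ij}$. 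This is the computation I expect to be the real obstacle, since one has to track the normalization constants $v^{-\dim_{k}V_{\alpha}+\varepsilon(\alpha)}$ built into $\langle u_{\alpha}\rangle$ together with the sign and $v^{\varepsilon(i)}$ twist in the image of $\langle u^{-}_{i}\rangle$, and confirm they conspire to give exactly the Drinfeld--Jimbo normalization.

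Finally I would establish that $\theta$ is an isomorphism of algebras (bijectivity) and then of Hopf algebras. For bijectivity: $\theta$ sends generators to generators, so it is surjective onto $U$; injectivity follows by comparing the triangular decomposition $\mathcal{D}_{\mathscr{C}}(\Delta)=\mathscr{C}^{-}(\Delta)\otimes T\otimes\mathscr{C}^{+}(\Delta)$ with $U\cong U^{-}\otimes U^{0}\otimes U^{+}$, using that $\theta$ restricts to isomorphisms on each of the three tensor factors (the positive and negative parts by Ringel--Green as above, and the torus part since the $K_{i}^{\varepsilon(i)}$ generate $U^{0}$ freely). For the Hopf structure, it suffices to check that $\theta$ intertwines the comultiplications on generators: Green's comultiplication on $\langle u^{\pm}_{i}\rangle$ has the group-like-plus-primitive shape $\langle u^{+}_{i}\rangle\otimes 1+K_{i}\otimes\langle u^{+}_{i}\rangle$, which matches $\Delta(E_{i})=E_{i}\otimes 1+K_{i}\otimes E_{i}$ after applying $\theta$, and similarly (with the sign absorbed) for the $F_{i}$; compatibility with counit and antipode on generators is then routine, and since $\theta$ is an algebra map this extends to all of $\mathcal{D}_{\mathscr{C}}(\Delta)$. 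I would remark that all of this is essentially contained in \cite{x}; the statement here is recorded for later use.
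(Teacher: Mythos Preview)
The paper does not supply a proof of this theorem at all: it is stated with the parenthetical ``(See \cite{x})'' and used as a black box thereafter. So there is nothing in the paper to compare your proposal against. Your outline --- Ringel--Green for the $\pm$ parts, a direct check of the commutator relation via the skew Hopf pairing defining the double, and bijectivity from the compatible triangular decompositions --- is exactly the shape of the argument carried out in \cite{x}, and is appropriate as a sketch.

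One caveat on a detail you flagged lightly: the claim that ``the $K_{i}^{\varepsilon(i)}$ generate $U^{0}$ freely'' is not literally correct in the non-simply-laced case, since the $\varepsilon(i)i$ span only a sublattice of $Q=\mathbb{Z}[I]$ while $U^{0}$ in the paper's conventions is the group algebra of all of $Q$. The isomorphism as stated is therefore really onto the subalgebra of $U$ generated by $E_{i},F_{i},K_{i}^{\pm\varepsilon(i)}$; this is the version of the quantum group used throughout \cite{x} and is harmless for everything the present paper does (the symmetries, the integral form, and the crystal structure all live in $U^{+}$), but if you write this up you should either adjust the torus on one side or note the discrepancy.
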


\textbf{3.3 Some Derivations.} \ \ For $\alpha \in \mathcal{P}$, we
define the following operators on $\mathscr{H}(\Lambda)$:
$$r_{\alpha}(\langle u_{\lambda} \rangle)= \sum_{\beta\in \mathcal{P}}v^{\langle \beta, \alpha \rangle+(\alpha,\beta)}g^{\lambda}_{\beta\alpha}\frac{a_{\beta}a_{\alpha}}{a_{\lambda}}\langle u_{\beta}\rangle$$
$$r'_{\alpha}(\langle u_{\lambda} \rangle)= \sum_{\beta\in \mathcal{P}}v^{\langle \alpha,\beta  \rangle+(\alpha,\beta)}g^{\lambda}_{\alpha\beta}\frac{a_{\beta}a_{\alpha}}{a_{\lambda}}\langle u_{\beta}\rangle$$
for all $\lambda \in \mathcal{P}$, where
$a_{\alpha}=\Aut_{\Lambda}(V_{\alpha})$.

The following lemma can be proved by direct calculation (similar to
\cite{cx}, Prop 3.2).

\begin{lem}\label{lem 3 derivation}
For any $i\in I$ and $\lambda_{1},\lambda_{2}\in \mathcal{P}$, we
have
\begin{displaymath}
r_{i}(\langle u_{\lambda_{1}}\rangle \langle
u_{\lambda_{2}}\rangle)=\langle u_{\lambda_{1}}\rangle r_{i}(\langle
u_{\lambda_{2}}\rangle)+v^{(i,\lambda_{2})}r_{i}(\langle
u_{\lambda_{1}}\rangle)\langle u_{\lambda_{2}}\rangle.
\end{displaymath}
\begin{displaymath}
r'_{i}(\langle u_{\lambda_{1}}\rangle \langle
u_{\lambda_{2}}\rangle)=v^{(i,\lambda_{1})}\langle
u_{\lambda_{1}}\rangle r'_{i}(\langle
u_{\lambda_{2}}\rangle)+r'_{i}(\langle
u_{\lambda_{1}}\rangle)\langle u_{\lambda_{2}}\rangle.
\end{displaymath}
\end{lem}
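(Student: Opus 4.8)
The plan is to prove Lemma \ref{lem 3 derivation} by a direct computation in the Hall algebra, reducing it to a counting identity among the Hall numbers $g^{\lambda}_{\alpha\beta}$. First I would expand both sides using the twisted multiplication formula $\langle u_{\alpha}\rangle\langle u_{\beta}\rangle = v^{-\langle\beta,\alpha\rangle}\sum_{\lambda}g^{\lambda}_{\alpha\beta}\langle u_{\lambda}\rangle$ together with the definition of $r_{i}$ and $r'_{i}$. Concretely, to handle $r'_{i}(\langle u_{\lambda_{1}}\rangle\langle u_{\lambda_{2}}\rangle)$ I first write $\langle u_{\lambda_{1}}\rangle\langle u_{\lambda_{2}}\rangle$ as a linear combination of $\langle u_{\mu}\rangle$, then apply $r'_{i}$ termwise; on the other side I apply $r'_{i}$ to each factor and multiply out. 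After collecting the coefficient of a fixed basis element $\langle u_{\beta}\rangle$ on each side, the claimed identity becomes an equality of sums over intermediate modules, weighted by powers of $v$ coming from the Euler form and by ratios of automorphism-group orders $a_{\alpha}$.

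The key step is the combinatorial heart: a Hall-number analogue of the ``Riedtmann--Peng'' formula, i.e. an identity relating $g^{\mu}_{\lambda_{1}\lambda_{2}}\,g^{\beta}_{i\mu}$ (summed over $\mu$) to the terms $g^{\beta_{1}}_{i\lambda_{1}}\,g^{\beta}_{\beta_{1}\lambda_{2}}$ and $g^{\lambda_{1}}_{?}\,\dots$ arising from first extracting an $i$-layer from $\lambda_{1}$ versus from $\lambda_{2}$. Since $i$ is a simple module, any submodule filtration ``containing an $i$ on top'' either takes that $i$ from the $\lambda_{1}$-part or from the $\lambda_{2}$-part of a given filtration of $\beta$ with subquotients $\lambda_{1},\lambda_{2}$; this dichotomy is exactly what produces the two terms on the right-hand side. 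The bookkeeping of the $a_{\alpha}$'s turns the ``number of submodules'' count $g$ into a ``number of short exact sequences'' count, which makes the associativity-type identity transparent. I would cite Ringel's standard manipulations and the parallel computation in \cite{cx}, Prop.~3.2, rather than redo every line, since the structure there is identical up to the obvious twist by the symmetric Euler form $(i,-)$.

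The only genuine subtlety — and the step I expect to be the main obstacle — is getting the powers of $v$ to match exactly: the asymmetry between $r_{i}$ (with weight $v^{\langle\beta,\alpha\rangle+(\alpha,\beta)}$) and $r'_{i}$ (with weight $v^{\langle\alpha,\beta\rangle+(\alpha,\beta)}$) is what forces the factor $v^{(i,\lambda_{2})}$ to sit on the second term for $r_{i}$ but the factor $v^{(i,\lambda_{1})}$ on the first term for $r'_{i}$. Here one uses that $\langle i,-\rangle$ and $\langle-,i\rangle$ are additive on dimension vectors, that $\dimv\mu = \dimv\lambda_{1}+\dimv\lambda_{2}$ for every $\mu$ occurring in $\langle u_{\lambda_{1}}\rangle\langle u_{\lambda_{2}}\rangle$, and that $\dimv\beta = i + \dimv\mu$ for every $\beta$ occurring in $\langle u_{i}\rangle\langle u_{\mu}\rangle$; substituting these relations into the exponents and using $(i,\lambda)=\langle i,\lambda\rangle+\langle\lambda,i\rangle$ collapses everything to the stated form. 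Once the exponent bookkeeping is pinned down, the lemma follows. Since $r_{i}$ and $r'_{i}$ are related by the Hall-algebra (anti-)automorphism coming from transpose/duality, one could alternatively prove the $r'_{i}$-identity first and deduce the $r_{i}$-identity from it, but doing both directly is no harder.
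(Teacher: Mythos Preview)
Your approach is exactly what the paper does: it states only that the lemma ``can be proved by direct calculation (similar to \cite{cx}, Prop.~3.2)'', i.e.\ expand both sides via the twisted multiplication and the definitions of $r_i,r'_i$, reduce to an associativity-type identity among Hall numbers, and check the $v$-exponents. One small caution on your write-up: applying $r'_i$ \emph{lowers} the dimension vector by $i$, so the relevant Hall number after multiplying to $\langle u_\mu\rangle$ is $g^{\mu}_{i\beta}$ with $\dimv\beta=\dimv\mu-i$, not $g^{\beta}_{i\mu}$ with $\dimv\beta=i+\dimv\mu$ as you wrote; fix those indices before carrying out the exponent bookkeeping.
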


By the lemma above it is easy to see that
\begin{displaymath}
r'_{i}(1)=r_{i}'(\langle u_{0}\rangle)=0, \ r'_{i}(\langle
u_{j}\rangle)=\delta_{ij},
\end{displaymath}
\begin{displaymath}
r'_{i}(\langle u_{j}\rangle \langle
u_{\lambda}\rangle)=v^{(i,j)}\langle u_{j}\rangle r'_{i}(\langle
u_{\lambda}\rangle)+\delta_{ij}\langle u_{\lambda}\rangle.
\end{displaymath}

So if we restrict $r'_{i}$ to the composition algebra
$\mathscr{C}(\Lambda)$, we will have
\begin{displaymath}
r'_{i}(\langle u_{j}\rangle P)=v^{(i,j)}\langle u_{j}\rangle
r'_{i}(P)+\delta_{ij}P, \ \ for \ any \ P\in \mathscr{C}(\Lambda)
\end{displaymath}

\begin{lem}\label{lem 3 derivation coincide}
When we identify $U^{+}$ with the generic composition algebra, we
have $f_{i}'=r_{i}'$.
\end{lem}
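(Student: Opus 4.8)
The plan is to show that, after transporting $f_i'$ to $\mathscr{C}(\Delta)$ along the Ringel--Green isomorphism, both $f_i'$ and $r_i'$ are $\mathbb{Q}(v)$-linear endomorphisms of $\mathscr{C}(\Delta)=U^{+}$ satisfying literally the same inductive recursion; the lemma then follows because such a recursion has at most one solution.

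First I would reconcile the scalars in the two definitions. On the quantum-group side $f_i'$ is determined by $f_i'(1)=0$ and $f_i'(E_jP)=v_i^{a_{ij}}E_jf_i'(P)+\delta_{ij}P$; since $v_i=v^{\varepsilon_i}$ with $\varepsilon_i=(i,i)/2$ and $a_{ij}=2(i,j)/(i,i)$, we have $\varepsilon_i a_{ij}=(i,j)$, so $v_i^{a_{ij}}=v^{(i,j)}$ and the recursion becomes $f_i'(E_jP)=v^{(i,j)}E_jf_i'(P)+\delta_{ij}P$. Under the identification of $\mathscr{C}(\Delta)$ with $U^{+}$ sending $\langle u_i\rangle$ to $E_i$ (the positive-part restriction of the isomorphism $\theta$ of Theorem \ref{thm 3 Drinfeld d isom quantum group}), this is exactly the recursion for $r_i'$ displayed just before the lemma, namely $r_i'(\langle u_j\rangle P)=v^{(i,j)}\langle u_j\rangle r_i'(P)+\delta_{ij}P$, which holds over every finite field and hence componentwise on $\mathscr{C}(\Delta)\subset\Pi$; moreover the same recursion, applied by induction to monomials in the $\langle u_j\rangle$, shows that $r_i'$ maps $\mathscr{C}(\Delta)$ into itself. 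Thus $f_i'$ and $r_i'$ are both $\mathbb{Q}(v)$-linear operators on $U^{+}$ satisfying $\phi(1)=0$ and $\phi(E_jP)=v^{(i,j)}E_j\phi(P)+\delta_{ij}P$ for all $j\in I$ and $P\in U^{+}$.

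It remains to establish uniqueness of such a $\phi$. Since $U^{+}$ is generated as an algebra by the $E_j$, it is spanned over $\mathbb{Q}(v)$ by $1$ together with the monomials $E_{j_1}E_{j_2}\cdots E_{j_m}$ with $m\geq 1$, and each such monomial equals $E_{j_1}\cdot(E_{j_2}\cdots E_{j_m})$; hence by induction on $m$ (equivalently on the height of the $Q_+$-degree) the value of $\phi$ on every monomial is forced by the recursion, so $\phi$ is unique. Applying this to $\phi=f_i'$ and $\phi=r_i'$ gives $f_i'=r_i'$. I do not expect a genuine obstacle here: the only points that need care are the elementary identity $v_i^{a_{ij}}=v^{(i,j)}$, the fact that $\langle u_i\rangle$ is sent to $E_i$ under the identification (so the two recursions coincide on the same generators), and the routine passage of the finite-field recursion for $r_i'$ to the generic composition algebra.
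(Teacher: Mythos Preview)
Your proposal is correct and follows essentially the same approach as the paper: the paper's proof consists of the single sentence ``Just compare the above formulas with the definition of $f_i'$ in Section 2.2,'' and your argument simply makes this comparison explicit by checking $v_i^{a_{ij}}=v^{(i,j)}$ and spelling out the uniqueness of an operator satisfying the common recursion.
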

\begin{proof}
Just compare the above formulas with the definition of $f_{i}'$ in
Section 2.2.
\end{proof}

Also, there are two other derivations on $\mathscr{H}(\Lambda)$
which have nice properties. Namely, define
$$_{\alpha}\delta=\dfrac{(v^{2})^{-\dim_{k}V_{\alpha} +
\varepsilon(\alpha)}}{a_{\alpha}} r'_{\alpha},\
\delta_{\alpha}=\dfrac{(v^{2})^{-\dim_{k}V_{\alpha} +
\varepsilon(\alpha)}}{a_{\alpha}} r_{\alpha}.$$

The key properties of $_{\alpha}\delta$ and $\delta_{\alpha}$ are as
following (but note that here our definitions are slightly different
from the original ones):

\begin{prop}\label{prop 3 derivation homo}
We consider the following linear maps:
\begin{eqnarray*}
\phi_{1}: \mathscr{H}(\Lambda) & \longrightarrow & \Hom_{\mathbb{Q}(v)}(\mathscr{H}(\Lambda),\mathscr{H}(\Lambda))\\
\langle u_{\lambda}\rangle & \longrightarrow &  _{\lambda}\delta\\
\phi_{2}: \mathscr{H}(\Lambda) & \longrightarrow & \Hom_{\mathbb{Q}(v)}(\mathscr{H}(\Lambda),\mathscr{H}(\Lambda))\\
\langle u_{\lambda}\rangle & \longrightarrow &  \delta_{\lambda}
\end{eqnarray*}
(1) $\phi_{1}$
 is an anti-homomorphism, i.e $\phi_{1}(\langle u_{\lambda_{1}}\rangle \langle u_{\lambda_{2}}\rangle)=\phi_{1}(\langle u_{\lambda_{2}}\rangle)\phi_{1}(\langle u_{\lambda_{1}}\rangle)$. \\
(2) $\phi_{2}$ is a homomorphism, i.e $\phi_{2}(\langle
u_{\lambda_{1}}\rangle \langle
u_{\lambda_{2}}\rangle)=\phi_{2}(\langle
u_{\lambda_{1}}\rangle)\phi_{2}(\langle u_{\lambda_{2}}\rangle)$.
\end{prop}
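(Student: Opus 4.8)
The plan is to prove Proposition~\ref{prop 3 derivation homo} by reducing both statements to the dual of the comultiplication on the extended Hall algebra. Recall that Green's comultiplication $\Delta$ on $\mathscr{H}(\Lambda)$ is coassociative and, via the skew Hopf pairing, is adjoint to the multiplication; the operators $r'_\alpha$ and $r_\alpha$ are (up to the normalizing scalars hidden in $_\alpha\delta$ and $\delta_\alpha$) nothing but the ``left'' and ``right'' components of $\Delta$, i.e. $r'_\alpha(x)$ and $r_\alpha(x)$ are read off from the terms of $\Delta(x)$ whose first (resp. second) tensor factor is proportional to $\langle u_\alpha\rangle$. Concretely, I would first record the precise identity expressing $\Delta(\langle u_\lambda\rangle)$ in terms of the Hall numbers $g^{\lambda}_{\beta\alpha}$ and the automorphism orders $a_\beta, a_\alpha, a_\lambda$, and check that with the chosen normalization one has, for all $x$,
\begin{displaymath}
\Delta(x) = \sum_{\alpha\in\mathcal{P}} \langle u_\alpha\rangle \otimes {}_\alpha\delta(x) + (\text{higher order in the first factor}),
\end{displaymath}
and symmetrically $\Delta(x) = \sum_{\alpha} \delta_\alpha(x)\otimes\langle u_\alpha\rangle + (\text{higher order in the second factor})$. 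This is the ``reconstruct the derivation from the coproduct'' step and is where the scalar bookkeeping with $(v^2)^{-\dim_k V_\alpha+\varepsilon(\alpha)}/a_\alpha$ must be made to match; I expect this to be the main obstacle, since it requires care with the twisting in Green's formula and with the definition $\langle u_\alpha\rangle = v^{-\dim_k V_\alpha + \varepsilon(\alpha)} u_\alpha$.

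Once this identification is in place, the homomorphism/anti-homomorphism properties follow formally from the fact that $\Delta$ is an algebra homomorphism $\mathscr{H}(\Lambda)\to\mathscr{H}(\Lambda)\otimes\mathscr{H}(\Lambda)$ (with the twisted multiplication on the target). For $\phi_2$: applying $\Delta$ to a product $\langle u_{\lambda_1}\rangle\langle u_{\lambda_2}\rangle$ and extracting the coefficient of $\langle u_\alpha\rangle$ in the \emph{second} tensor slot, one gets $\delta_\alpha(\langle u_{\lambda_1}\rangle\langle u_{\lambda_2}\rangle)$ on the left, while on the right $\Delta(\langle u_{\lambda_1}\rangle)\Delta(\langle u_{\lambda_2}\rangle)$ produces, in the second slot, a sum over $\alpha_1,\alpha_2$ with $\alpha_1+\alpha_2$ contributing to $\alpha$ of $\delta_{\alpha_1}(\langle u_{\lambda_1}\rangle)$ times $\delta_{\alpha_2}(\langle u_{\lambda_2}\rangle)$, times the structure constant $g^{\alpha}_{\alpha_1\alpha_2}$ and the appropriate power of $v$ coming from the twist. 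Summing over $\alpha$ and using $\langle u_{\alpha_1}\rangle\langle u_{\alpha_2}\rangle = v^{-\langle\alpha_2,\alpha_1\rangle}\sum_\alpha g^{\alpha}_{\alpha_1\alpha_2}\langle u_\alpha\rangle$ exactly reassembles $\delta_{\lambda_1}\circ\delta_{\lambda_2}$ applied to the relevant argument — hence $\phi_2(\langle u_{\lambda_1}\rangle\langle u_{\lambda_2}\rangle)=\phi_2(\langle u_{\lambda_1}\rangle)\phi_2(\langle u_{\lambda_2}\rangle)$. The calculation for $\phi_1$ is identical except that it reads off the \emph{first} tensor slot, and because in $\Delta(\langle u_{\lambda_1}\rangle)\Delta(\langle u_{\lambda_2}\rangle)$ the first-slot factors are multiplied in the order $\langle u_{\alpha_1}\rangle\langle u_{\alpha_2}\rangle$ with $\alpha_1$ coming from $\lambda_1$, the composition of operators comes out in the opposite order, giving the anti-homomorphism ${}_\lambda\delta$ property.

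As a technical point I would also verify, before the main argument, that $_\alpha\delta$ and $\delta_\alpha$ are genuinely well defined on all of $\mathscr{H}(\Lambda)$ (not just the composition subalgebra) and that the sums are finite, which follows since $g^{\lambda}_{\beta\alpha}=0$ unless $\dimv V_\beta + \dimv V_\alpha = \dimv V_\lambda$, so only finitely many $\beta$ occur for fixed $\lambda$; this also shows each $_\alpha\delta$ and $\delta_\alpha$ preserves the grading shift by $-\dimv V_\alpha$, so $\phi_1,\phi_2$ land in graded endomorphisms. An alternative, more hands-on route — avoiding the explicit coproduct — is to prove the two Leibniz-type identities of Lemma~\ref{lem 3 derivation} in the general form with $i$ replaced by an arbitrary $\alpha\in\mathcal{P}$ (the twisting exponents become $(v^2)$-powers built from $\langle\,,\rangle$ and $(\,,\,)$), and then iterate: a direct induction on the length of a monomial shows that the map $x\mapsto{}_x\delta$ (resp. $x\mapsto\delta_x$) is multiplicative in the required (anti-)sense. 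I would keep the coproduct-adjunction proof as the primary one since it is conceptually cleaner and isolates all the scalar subtleties in a single lemma, and mention the direct computation as the fallback that makes the statement ``can be proved by direct calculation'' literally true.
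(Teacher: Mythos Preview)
The paper does not actually supply a proof of this proposition: it is stated immediately after the definitions of $_\alpha\delta$ and $\delta_\alpha$, with only the parenthetical remark that the definitions differ slightly from the originals in \cite{cx}, and the surrounding text (cf.\ Lemma~\ref{lem 3 derivation}) indicates that the intended justification is ``direct calculation (similar to \cite{cx}, Prop~3.2)''. So there is no detailed argument in the paper to compare against beyond that pointer.

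Your proposal is sound and in fact more conceptual than the direct computation the paper defers to. The coproduct route works, but it can be streamlined: rather than extracting coefficients of $\Delta$, it is cleaner to observe that with respect to Ringel's bilinear form $(\langle u_\alpha\rangle,\langle u_\beta\rangle)_R=v^{(\alpha,\alpha)}a_\alpha^{-1}\delta_{\alpha\beta}$ (introduced later in Section~\ref{section 8}), the operator $_\alpha\delta$ is the adjoint of \emph{left} multiplication by $\langle u_\alpha\rangle$, and $\delta_\alpha$ is the adjoint of \emph{right} multiplication. Since left multiplication is a homomorphism $\mathscr{H}(\Lambda)\to\End(\mathscr{H}(\Lambda))$ and taking adjoints reverses composition, $\phi_1$ is an anti-homomorphism; since right multiplication is already an anti-homomorphism and adjoints reverse again, $\phi_2$ is a homomorphism. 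This packages all the $v$-power bookkeeping into the single verification that $(\langle u_\alpha\rangle x,y)_R=(x,{}_\alpha\delta(y))_R$ and $(x\langle u_\alpha\rangle,y)_R=(x,\delta_\alpha(y))_R$, which is the general-$\alpha$ version of Lemma~\ref{lem 8 calcul Ringel form} and is exactly the ``single lemma'' you anticipated. Your fallback direct-calculation approach is precisely what the paper has in mind, so either route is acceptable; the adjoint formulation just makes the (anti\nobreakdash-)homomorphism dichotomy transparent without chasing twist factors through $\Delta$.
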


\begin{rem}\label{rem 3 derivation}
From the proposition above, we have
$_{\alpha}\delta(\mathscr{C}(\Lambda))\subseteq
 \mathscr{C}(\Lambda)$ and $\delta_{\alpha}(\mathscr{C}(\Lambda))\subseteq \mathscr{C}(\Lambda)$ if $\langle
u_{\alpha}\rangle\in \mathscr{C}(\Lambda)$. Moreover if $\langle
u_{\alpha}\rangle$ is expressed as a combination of monomials of
$\langle u_{i}\rangle , i \in I$, then $_{\alpha}\delta\ (resp. \
\delta_{\alpha})$ can be expressed as the corresponding combination
of monomials of $_{i}\delta\ (resp. \ \delta_{i}) , i \in I$.
\end{rem}

\section{Reflection functors and Lusztig's symmetries}
\label{section 4} Given a Cartan datum $\Delta$ as before, there is
a valued graph $(\Gamma,d)$ corresponding to it (where
$\Gamma=(\Gamma_{0},\Gamma_{1})$, $\Gamma_{0}$ the set of vertices,
$\Gamma_{1}$ the set of edges with $|\Gamma_{0}|=I$). We obtain
$(\Gamma,d,\Omega)$ by prescribing an orientation $\Omega$ to
$(\Gamma,d)$, and always write $\Omega$ for simplicity. Then let
$\mathscr{S}=(F_{i},{_{i}M_{j}})_{i.j\in \Gamma_{0}}$ be a reduced
k-species of  type $\Omega$. Denote by rep-$\mathscr{S}$ the
category of finite dimensional representations of $\mathscr{S}$ over
$k$. We know that the category rep-$\mathscr{S}$ is equivalent to
the module category of finite dimensional modules over a finite
dimensional hereditary $k$-algebra $\Lambda$. This hereditary
$k$-algebra $\Lambda$ is given by the tensor algebra of
$\mathscr{S}$ (See \cite{dr}). Furthermore, any finite dimensional
hereditary $k$-algebra can be obtained in this way.

Let $p$ be a sink or a source of $\Omega$. We define
$\sigma_{p}\mathscr{S}$ to be the $k$-species obtained from
$\mathscr{S}$ by replacing $_{r}M_{s}$ by its $k$-dual for $r=p$ or
$s=p$, then $\sigma_{p}\mathscr{S}$ is a reduced $k$-species of type
$\sigma_{p}\Omega$, where the orientation $\sigma_{p}\Omega$ is
obtained by reversing the direction of arrows along all edges
containing $p$.

We have the Bernstein-Gelfand-Ponomarev \emph{reflection functors}
$\sigma^{\pm}_{p}: \textmd{rep-}\mathscr{S}$ $\rightarrow
\textmd{rep-}\sigma_{p}\mathscr{S}$, see \cite{bgp}, \cite{dr}.

If $i$ is a vertex of $\Gamma$, let rep-$\mathscr{S}\langle
i\rangle$ be the subcategory of rep-$\mathscr{S}$ consisting of all
representations which do not have $V_{i}$ as a direct summand, where
$V_{i}$ is the simple representation corresponding to $i$. If $i$ is
a sink, then $\sigma^{+}_{i}: \textmd{rep-}\mathscr{S}\langle
i\rangle\rightarrow \textmd{rep-}\sigma_{i}\mathscr{S}\langle
i\rangle$ is an equivalence and it is exact and induces isomorphism
on both $\Hom$ and $\Ext$. The assertion for $\sigma^{-}_{i}$ is the
same if $i$ is a source.

Let $\Lambda$ just be the tensor algebra of a $k$-species
$\mathscr{S}$. We can identify mod-$\Lambda$ with rep-$\mathscr{S}$,
therefore, $\mathscr{H}(\Lambda)$ can be viewed as being defined for
rep-$\mathscr{S}$. We also use $\sigma_{i}\Lambda$ to denote the
tensor algebra of $\sigma_{i}\mathscr{S}$ and $\sigma_{i}\Delta$ to
denote the Cartan datum corresponding to the algebra
$\sigma_{i}\Lambda$ (note that in fact $\sigma_{i}\Delta$ and
$\Delta$ denote the same Cartan datum). We define
$\mathscr{H}(\Lambda)\langle i\rangle$ to be the
$\mathbb{Q}(v)$-subspace of $\mathscr{H}(\Lambda)$ generated by
$\langle u_{\alpha} \rangle$ with $V_{\alpha}\in$
rep-$\mathscr{S}\langle i\rangle$. If $i$ is a sink or a source,
since rep-$\mathscr{S}\langle i\rangle$ is closed under extensions,
$\mathscr{H}(\Lambda)\langle i\rangle$ is a subalgebra of
$\mathscr{H}(\Lambda)$. The following result is due to Ringel
\cite{r5}:

\begin{prop}\label{prop 4 ref functor Hall alg}
Let $i$ be a sink. The functor $\sigma^{+}_{i}$ yields a
$\mathbb{Q}(v)$-algebra isomorphism
$T_{i}:\mathscr{H}(\Lambda)\langle i\rangle\rightarrow
\mathscr{H}(\sigma_{i}\Lambda)\langle i\rangle$ with $T_{i}(\langle
u_{\alpha}\rangle)=\langle u_{\sigma^{+}_{i}\alpha}\rangle$ for any
$V_{\alpha} \in$ rep-$\mathscr{S}\langle i\rangle$.
\end{prop}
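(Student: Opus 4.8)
The plan is to reduce the statement to a comparison of structure constants, using the multiplication formula
$$\langle u_{\alpha}\rangle\langle u_{\beta}\rangle=v^{-\langle\beta,\alpha\rangle}\sum_{\lambda}g^{\lambda}_{\alpha\beta}\langle u_{\lambda}\rangle$$
from Section \ref{section 3}. Since $\mathscr{H}(\Lambda)\langle i\rangle$ is the $\mathbb{Q}(v)$-span of those $\langle u_{\alpha}\rangle$ with $V_{\alpha}\in\textrm{rep-}\mathscr{S}\langle i\rangle$, and since $\textrm{rep-}\mathscr{S}\langle i\rangle$ is closed under extensions (so $g^{\lambda}_{\alpha\beta}=0$ for such $\alpha,\beta$ unless $V_{\lambda}\in\textrm{rep-}\mathscr{S}\langle i\rangle$ as well), the subalgebra $\mathscr{H}(\Lambda)\langle i\rangle$ has basis $\{\langle u_{\alpha}\rangle:V_{\alpha}\in\textrm{rep-}\mathscr{S}\langle i\rangle\}$ with the very same structure constants $v^{-\langle\beta,\alpha\rangle}g^{\lambda}_{\alpha\beta}$, now with all indices ranging over $\textrm{rep-}\mathscr{S}\langle i\rangle$; the same holds for $\mathscr{H}(\sigma_{i}\Lambda)\langle i\rangle$. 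It therefore suffices to check that $\alpha\mapsto\sigma^{+}_{i}\alpha$ is a bijection between the isomorphism classes in $\textrm{rep-}\mathscr{S}\langle i\rangle$ and those in $\textrm{rep-}\sigma_{i}\mathscr{S}\langle i\rangle$ which preserves both $\langle\beta,\alpha\rangle$ and $g^{\lambda}_{\alpha\beta}$; then the $\mathbb{Q}(v)$-linear map $T_{i}$ sending $\langle u_{\alpha}\rangle$ to $\langle u_{\sigma^{+}_{i}\alpha}\rangle$ is automatically a bijective algebra homomorphism.

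To carry this out I would invoke the properties of $\sigma^{+}_{i}$ recalled just before the statement. First, $\sigma^{+}_{i}$ is an equivalence $\textrm{rep-}\mathscr{S}\langle i\rangle\to\textrm{rep-}\sigma_{i}\mathscr{S}\langle i\rangle$ (with quasi-inverse the functor $\sigma^{-}_{i}$ attached to the source $i$ of $\sigma_{i}\Omega$), hence induces a bijection on isomorphism classes. Second, since $\sigma^{+}_{i}$ is exact on this subcategory, it sends a short exact sequence $0\to V_{\beta}\to V_{\lambda}\to V_{\alpha}\to 0$ to $0\to\sigma^{+}_{i}V_{\beta}\to\sigma^{+}_{i}V_{\lambda}\to\sigma^{+}_{i}V_{\alpha}\to 0$, and, being an equivalence, it carries the submodules of $V_{\lambda}$ bijectively onto the submodules of $\sigma^{+}_{i}V_{\lambda}$ compatibly with the isomorphism types of sub and quotient; this yields $g^{\lambda}_{\alpha\beta}=g^{\sigma^{+}_{i}\lambda}_{\sigma^{+}_{i}\alpha,\sigma^{+}_{i}\beta}$. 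Third, since $\sigma^{+}_{i}$ induces isomorphisms on $\Hom$ and $\Ext$, we get $\dim_{k}\Hom_{\Lambda}(V_{\beta},V_{\alpha})=\dim_{k}\Hom_{\sigma_{i}\Lambda}(\sigma^{+}_{i}V_{\beta},\sigma^{+}_{i}V_{\alpha})$ and likewise for $\Ext$, so $\langle\sigma^{+}_{i}\beta,\sigma^{+}_{i}\alpha\rangle=\langle\beta,\alpha\rangle$.

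Combining these three points, $T_{i}$ carries the structure constant $v^{-\langle\beta,\alpha\rangle}g^{\lambda}_{\alpha\beta}$ of $\mathscr{H}(\Lambda)\langle i\rangle$ to the structure constant $v^{-\langle\sigma^{+}_{i}\beta,\sigma^{+}_{i}\alpha\rangle}g^{\sigma^{+}_{i}\lambda}_{\sigma^{+}_{i}\alpha,\sigma^{+}_{i}\beta}$ of $\mathscr{H}(\sigma_{i}\Lambda)\langle i\rangle$, so it is an algebra homomorphism; it is bijective because $\alpha\mapsto\sigma^{+}_{i}\alpha$ is a bijection on isomorphism classes, and its inverse is the analogous map built from $\sigma^{-}_{i}$. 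I expect the only genuinely delicate step to be the middle one: making precise that restricting the Hall multiplication to $\mathscr{H}(\Lambda)\langle i\rangle$ leaves the structure constants unchanged, and that an exact equivalence induces an honest bijection between the sets of submodules counted by $g^{\lambda}_{\alpha\beta}$ rather than merely on isomorphism classes of subobjects — this is precisely where one uses that $i$ is a sink, i.e. that $\textrm{rep-}\mathscr{S}\langle i\rangle$ is closed under extensions and that $\sigma^{+}_{i}$ is exact there. Everything else is formal manipulation of the formula of Section \ref{section 3} together with the already-cited properties of the BGP functors.
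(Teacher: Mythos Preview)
Your argument is correct and is exactly the standard proof: an exact equivalence that preserves $\Hom$ and $\Ext$ preserves both the Euler form and the Hall numbers, hence the structure constants of the twisted Hall multiplication. The paper itself does not supply a proof of this proposition at all --- it simply attributes the result to Ringel \cite{r5} --- so there is nothing to compare against beyond noting that your approach is the expected one and matches Ringel's.
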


The isomorphism $T_{i}$ can be extended to the whole reduced
Drinfeld double $\mathcal{D}(\Lambda)$ as follows (See \cite{xy}):

Let $\overline{K_{i}}=v^{-\varepsilon(i)}K_{i}$, $\langle u_{\alpha}
\rangle ^{(t)}=\langle u_{\alpha} \rangle
^{t}/([t]!)_{\varepsilon(\alpha)}$ for $\alpha\in \mathcal{P}$ and
$t \in \mathbb{N}$.

For $\lambda \in \mathcal{P}$, assume that
$V_{\lambda}=V_{\lambda_{0}}\oplus tV_{i}$ and $V_{\lambda_{0}}$
contains no direct summand isomorphic to $V_{i}$. Then
$\Hom_{\Lambda}(V_{\lambda_{0}},V_{i})=0$ and
$\Ext_{\Lambda}(V_{i},V_{\lambda_{0}})=0$ since $i$ is a sink of
$\mathscr{S}$. Thus $\langle u^{+}_{\lambda}\rangle = v^{\langle
\lambda_{0},ti\rangle}\langle u^{+}_{i}\rangle ^{(t)}\langle
u^{+}_{\lambda_{o}}\rangle$ in $\mathscr{H}^{+}(\Lambda)$. We define
$T_{i}: \mathscr{H}^{+}(\Lambda)\rightarrow
\mathcal{D}(\sigma_{i}\Lambda)$ as follows:
$$T_{i}(\langle
u^{+}_{\lambda}\rangle)=\frac{v^{\langle
\lambda_{0},ti\rangle}}{[t]!_{\varepsilon(i)}}(\langle
u^{-}_{i}\rangle \overline{K_{i}})^{t}\langle
u^{+}_{\sigma^{+}_{i}\lambda_{0}}\rangle =v^{\langle
\lambda,ti\rangle}K_{ti}\langle u^{-}_{i}\rangle ^{(t)}\langle
u^{+}_{\sigma^{+}_{i}\lambda_{0}}\rangle.$$ In particular,
$T_{i}(\langle u^{+}_{i}\rangle)= \langle u^{-}_{i}\rangle
\overline{K_{i}}$.

Symmetrically we define a morphism $T_{i}:
\mathscr{H}^{-}(\Lambda)\rightarrow \mathcal{D}(\sigma_{i}\Lambda)$
as follows:
$$T_{i}(\langle
u^{-}_{\lambda}\rangle)=\frac{v^{\langle
\lambda_{0},ti\rangle}}{[t]!_{\varepsilon(i)}}(\overline{K_{-i}}\langle
u^{+}_{i}\rangle )^{t}\langle
u^{-}_{\sigma^{+}_{i}\lambda_{0}}\rangle =v^{\langle
\lambda,ti\rangle}K_{-ti}\langle u^{+}_{i}\rangle ^{(t)}\langle
u^{-}_{\sigma^{+}_{i}\lambda_{0}}\rangle$$ for all $\lambda \in
\mathcal{P}$, where $V_{\lambda}=V_{\lambda_{0}}\oplus tV_{i}$ and
$V_{\lambda_{0}}$ contains no direct summand isomorphic to $V_{i}$.

Also, we extend $T_{i}$ to the torus algebra by setting
$T_{i}(K_{\alpha})=K_{s_{i}(\alpha)}$ for $\alpha \in
\mathbb{Z}[I]$. Set $T_{i}(K_{\alpha}\langle
u^{\pm}_{\lambda}\rangle)=T_{i}(K_{\alpha})T_{i}(\langle
u^{\pm}_{\lambda}\rangle)$.

The following theorem can be found in \cite{xy}.

\begin{thm}\label{thm 4 ref functotr Drinfeld d}
Let $i$ be a sink. The operator $T_{i}$ induces a
$\mathbb{Q}(v)$-algebra isomorphism:
$\mathcal{D}_{\mathscr{C}}(\Delta)\xrightarrow{\sim}
\mathcal{D}_{\mathscr{C}}(\sigma_{i}\Delta)$.
\end{thm}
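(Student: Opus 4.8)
The plan is to proceed in three stages: extend $T_{i}$ to an algebra homomorphism between the full Hall-algebra Drinfeld doubles $\mathcal{D}(\Lambda)\to\mathcal{D}(\sigma_{i}\Lambda)$; show it is invertible; and show it restricts to a bijection between the composition-algebra doubles. For the first stage, since $i$ is a sink, $\Hom_{\Lambda}(V_{\alpha},V_{i})=0=\Ext_{\Lambda}(V_{i},V_{\alpha})$ for every $V_{\alpha}\in$ rep-$\mathscr{S}\langle i\rangle$, so the identity $\langle u^{+}_{\lambda}\rangle=v^{\langle\lambda_{0},ti\rangle}\langle u^{+}_{i}\rangle^{(t)}\langle u^{+}_{\lambda_{0}}\rangle$ (with $V_{\lambda}=V_{\lambda_{0}}\oplus tV_{i}$, $V_{\lambda_{0}}$ having no summand $V_{i}$) shows that $\mathscr{H}^{+}(\Lambda)$ is generated by $\mathscr{H}(\Lambda)\langle i\rangle$ together with $\langle u^{+}_{i}\rangle$, and dually for $\mathscr{H}^{-}(\Lambda)$; hence $\mathcal{D}(\Lambda)$ is generated by $\mathscr{H}(\Lambda)\langle i\rangle$ (in both its $+$ and $-$ copies), the elements $\langle u^{\pm}_{i}\rangle$, and the torus $T$. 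On $\mathscr{H}(\Lambda)\langle i\rangle$ the map $T_{i}$ is the algebra isomorphism of Proposition~\ref{prop 4 ref functor Hall alg}, so what is left is to check that the defining formulas for $T_{i}$ on $\langle u^{\pm}_{i}\rangle$ and on $T$ are compatible with the remaining relations of $\mathcal{D}(\Lambda)$: the commutation of $\langle u^{\pm}_{i}\rangle$ with $\mathscr{H}(\Lambda)\langle i\rangle$, the relation between $\langle u^{+}_{i}\rangle$ and $\langle u^{-}_{i}\rangle$, and the relations with $T$. These are finite computations in $\mathcal{D}(\sigma_{i}\Lambda)$ using the multiplication rule $\langle u_{\alpha}\rangle\langle u_{\beta}\rangle=v^{-\langle\beta,\alpha\rangle}\sum g^{\lambda}_{\alpha\beta}\langle u_{\lambda}\rangle$, the torus commutation rules, and the skew-Hopf pairing; the crucial module-theoretic input is that $i$ is a source of $\sigma_{i}\Omega$ and that $\sigma^{+}_{i}$, being exact, induces isomorphisms on $\Hom$ and $\Ext$ between rep-$\mathscr{S}\langle i\rangle$ and rep-$\sigma_{i}\mathscr{S}\langle i\rangle$.

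For the second stage, the inverse homomorphism $\mathcal{D}(\sigma_{i}\Lambda)\to\mathcal{D}(\Lambda)$ is produced by the same recipe applied to the reflection functor $\sigma^{-}_{i}$ at the \emph{source} $i$ of $\sigma_{i}\Omega$; the natural isomorphism $\sigma^{-}_{i}\sigma^{+}_{i}\cong\mathrm{id}$ on rep-$\mathscr{S}\langle i\rangle$ together with the explicit formulas $T_{i}(\langle u^{+}_{i}\rangle)=\langle u^{-}_{i}\rangle\overline{K_{i}}$, $T_{i}(\langle u^{-}_{i}\rangle)=\overline{K_{-i}}\langle u^{+}_{i}\rangle$ and $T_{i}(K_{\alpha})=K_{s_{i}\alpha}$ show that the two homomorphisms are mutually inverse, so $T_{i}\colon\mathcal{D}(\Lambda)\xrightarrow{\sim}\mathcal{D}(\sigma_{i}\Lambda)$ is an isomorphism. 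Everything here is uniform in the finite field $k$, so it passes to the generic setting.

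For the third stage it suffices to show that $T_{i}$ carries the generators $\langle u^{\pm}_{j}\rangle$ ($j\in I$) and $K_{\alpha}$ of $\mathcal{D}_{\mathscr{C}}(\Delta)$ into $\mathcal{D}_{\mathscr{C}}(\sigma_{i}\Delta)$; the reverse inclusion then follows by the symmetry of the second stage, giving the asserted isomorphism. For $K_{\alpha}$ and for $\langle u^{\pm}_{i}\rangle$ this is immediate from the defining formulas. For $j\neq i$ we have $T_{i}(\langle u^{+}_{j}\rangle)=\langle u^{+}_{\sigma^{+}_{i}V_{j}}\rangle$, and the key point is to identify the indecomposable $\sigma_{i}\Lambda$-module $\sigma^{+}_{i}V_{j}$: if $i$ and $j$ are not joined in $\Gamma$ then $\sigma^{+}_{i}V_{j}=V_{j}$ and there is nothing to prove, while if they are joined one must show, using the structure of $\sigma^{+}_{i}V_{j}$ (which has dimension vector $s_{i}(\dimv V_{j})$) and Ringel's Hall-polynomial formulas, that $\langle u^{+}_{\sigma^{+}_{i}V_{j}}\rangle$ equals the quantum Serre combination $\sum_{r+s=-a_{ij}}(-1)^{r}v^{-r\varepsilon_{i}}\langle u^{+}_{i}\rangle^{(s)}\langle u^{+}_{j}\rangle\langle u^{+}_{i}\rangle^{(r)}$, which visibly lies in $\mathscr{C}^{+}(\sigma_{i}\Delta)$; the case of $\langle u^{-}_{j}\rangle$ is symmetric. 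I expect this last computation to be the main obstacle: in the simply-laced case it reduces to a short comparison of $\langle u_{i}\rangle\langle u_{j}\rangle$ with $\langle u_{j}\rangle\langle u_{i}\rangle$ and their higher analogues, but for a general valued graph one has to track the $F_{i}$-bimodule structure of the ${}_{i}M_{j}$ carefully when evaluating the relevant Hall numbers. As a byproduct, combining this identification with the isomorphism $\theta$ of Theorem~\ref{thm 3 Drinfeld d isom quantum group} shows that $\theta\,T_{i}\,\theta^{-1}$ agrees with Lusztig's symmetry $T_{i,1}''$ on the Chevalley generators --- the precise sense in which the BGP reflection functors realize Lusztig's symmetries.
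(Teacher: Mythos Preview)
The paper does not give its own proof of this theorem: it simply records the statement and cites \cite{xy} (``The following theorem can be found in \cite{xy}''). So there is no in-paper argument to compare against; what you have written is in effect a proposed reconstruction of the Xiao--Yang proof.

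That said, your three-stage outline is essentially the shape of the argument in \cite{xy}: one first extends the BGP-induced map from $\mathscr{H}(\Lambda)\langle i\rangle$ to all of $\mathcal{D}(\Lambda)$ by the explicit formulas on $\langle u^{\pm}_{i}\rangle$ and the torus, checks that this respects the Drinfeld-double relations (this is where the bulk of the work in \cite{xy} sits, and is genuinely more than a ``finite computation'' --- the cross relations coming from the skew-Hopf pairing require the full Green comultiplication and several pages of bookkeeping), produces the inverse via $\sigma^{-}_{i}$, and then restricts to the composition subalgebras by verifying the Serre-type identity for $\langle u^{+}_{\sigma^{+}_{i}V_{j}}\rangle$. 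You have correctly located the latter as the key module-theoretic computation, and your observation that it simultaneously yields $\theta\,T_{i}\,\theta^{-1}=T''_{i,1}$ is exactly how the paper packages the next result (Theorem~\ref{thm 4 ref functor coincide symmetries}). The only place I would push back is the phrase ``finite computations'': the verification that $T_{i}$ is an algebra map on the full double is the substantive content of \cite{xy}, and in the valued (non--simply-laced) case the Hall-number identities needed for the Serre formula are delicate; your sketch is honest about this, but one should not expect it to be short.
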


If $i$ is a source of $\mathscr{S}$, we can define $T'_{i}$ (via the
reflection functors $\sigma_{i}^{-}$) in the Hall algebra and extend
it to $\mathcal{D}(\Lambda)$ similarly, which also induces a
$\mathbb{Q}(v)$-algebra isomorphism:
$\mathcal{D}_{\mathscr{C}}(\Delta)\rightarrow
\mathcal{D}_{\mathscr{C}}(\sigma_{i}\Delta)$.

Recall that we have the isomorphism
$\mathcal{D}_{\mathscr{C}}(\Delta)\xrightarrow{\sim}U$ in Theorem
\ref{thm 3 Drinfeld d isom quantum group}. So we have the canonical
isomorphism
$\mathcal{D}_{\mathscr{C}}(\Delta)\xrightarrow{\sim}\mathcal{D}_{\mathscr{C}}(\sigma_{i}\Delta)$
by mapping $\langle u^{\pm}_{i}\rangle \mapsto \langle
u^{\pm}_{i}\rangle $ and $K_{i}\mapsto K_{i}$ for a sink $i \in I$.
Therefore we can identify
$\mathcal{D}_{\mathscr{C}}(\sigma_{i}\Delta)$ with
$\mathcal{D}_{\mathscr{C}}(\Delta)$ under this canonical
isomorphism. Then $T_{i}$ induces an automorphism
$\mathcal{D}_{\mathscr{C}}(\Delta)\xrightarrow{\sim}\mathcal{D}_{\mathscr{C}}(\Delta)$.
Similarly $T'_{i}$ can be viewed as an automorphism
$\mathcal{D}_{\mathscr{C}}(\Delta)\xrightarrow{\sim}\mathcal{D}_{\mathscr{C}}(\Delta)$
for a source $i \in I$. The following theorem asserts that $T_{i}$
and $T_{i}'$ coincides with Lusztig's symmetries (see Section
\ref{section 2}).

\begin{thm}\label{thm 4 ref functor coincide symmetries}
(1) Let $i$ be s sink. Then the isomorphism $T_{i}:
\mathcal{D}_{\mathscr{C}}(\Delta)\xrightarrow{\sim}\mathcal{D}_{\mathscr{C}}(\Delta)$
coincides with $T''_{i,1}$. Namely, $T_{i}=T''_{i,1}$, if we
identify $\mathcal{D}_{\mathscr{C}}(\Delta)$ with $U$ by Theorem
\ref{thm 3 Drinfeld d isom quantum group}.

(2) Let $i$ be a source. Then the isomorphism $T'_{i}:
\mathcal{D}_{\mathscr{C}}(\Delta)\xrightarrow{\sim}\mathcal{D}_{\mathscr{C}}(\Delta)$
coincides with $T'_{i,-1}$.
\end{thm}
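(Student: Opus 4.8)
The statement to prove is Theorem~\ref{thm 4 ref functor coincide symmetries}: that the Hall-algebra reflection operators $T_i$ (for a sink) and $T_i'$ (for a source) agree with Lusztig's symmetries $T_{i,1}''$ and $T_{i,-1}'$ after the identification $\theta:\mathcal{D}_{\mathscr{C}}(\Delta)\xrightarrow{\sim}U$ of Theorem~\ref{thm 3 Drinfeld d isom quantum group}.  Since both $T_i$ and $T_{i,1}''$ are $\mathbb{Q}(v)$-algebra automorphisms of $\mathcal{D}_{\mathscr{C}}(\Delta)\cong U$, and since $U$ is generated as an algebra by the $E_j,F_j$ ($j\in I$) together with the torus, it suffices to check that $T_i$ and $T_{i,1}''$ agree on this generating set.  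So the whole proof reduces to a finite list of verifications.

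**Main steps.**  First I would dispose of the torus: by construction $T_i(K_\alpha)=K_{s_i\alpha}$ and $T_{i,1}''(K_\mu)=K_{s_i\mu}$, so after the rescaling $K_i\mapsto K_i^{\varepsilon(i)}$ built into $\theta$ the two agree on $U^0$.  Second, the generator $E_i$ (resp.\ $F_i$): from the displayed formulas in Section~\ref{section 4}, $T_i(\langle u_i^+\rangle)=\langle u_i^-\rangle\overline{K_i}=v^{-\varepsilon(i)}\langle u_i^-\rangle K_i$; applying $\theta$ sends this to $v^{-\varepsilon(i)}\cdot(-v^{\varepsilon(i)}F_i)\cdot K_i^{\varepsilon(i)}=-F_iK_i^{\varepsilon_i}$, which is exactly $T_{i,1}''(E_i)$.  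The symmetric formula for $T_i(\langle u_i^-\rangle)$ likewise matches $T_{i,1}''(F_i)=-K_i^{-\varepsilon_i}E_i$ after applying $\theta$.  Third, and this is the substantive case, the generators $E_j$ and $F_j$ for $j\neq i$: here I would use Proposition~\ref{prop 4 ref functor Hall alg} together with the extension of $T_i$ to the Drinfeld double.  Since $V_j\in\text{rep-}\mathscr{S}\langle i\rangle$, we have $T_i(\langle u_j^+\rangle)=\langle u_{\sigma_i^+ j}^+\rangle$, so I must compute the Hall-algebra element $\langle u_{\sigma_i^+ V_j}\rangle$ and match it, through $\theta$, with the Lusztig expression $\sum_{r+s=-a_{ij}}(-1)^r v^{-r\varepsilon_i}E_i^{(s)}E_jE_i^{(r)}$.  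The point is that $\sigma_i^+V_j$ is (up to the choice of a representative in its isoclass) an exceptional module whose dimension vector is $s_i(\dimv V_j)=\dimv V_j-a_{ij}\dimv V_i$, and on the Hall-algebra side one knows the identity $\langle u_{\sigma_i^+V_j}\rangle = \sum_{r+s=-a_{ij}}(-1)^r v^{?}\,\langle u_i\rangle^{(s)}\langle u_j\rangle\langle u_i\rangle^{(r)}$ (this is essentially Ringel's computation in the rank-two situation, reducing to the $A_2$ / $B_2$ / $G_2$ subspecies spanned by $i$ and $j$).  Comparing exponents of $v$ on both sides — using $\theta(\langle u_i^+\rangle)=E_i$ so that $\theta(\langle u_i^+\rangle^{(s)})=E_i^{(s)}$ — yields the equality.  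For the source case $(2)$ one runs the mirror argument with $\sigma_i^-$, $T_i'$, and $T_{i,-1}'=(T_{i,1}'')^{-1}$, or alternatively deduces it from $(1)$ by observing that reversing the orientation at a source and at a sink are inverse operations and that $T_i'T_i=\mathrm{id}$ matches $T_{i,-1}'T_{i,1}''=\mathrm{id}$.

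**The main obstacle.**  The genuinely delicate point is the $j\neq i$ computation, i.e.\ identifying the Hall-algebra image $\langle u_{\sigma_i^+V_j}\rangle$ with the explicit Serre-type expression in $\langle u_i\rangle$ and $\langle u_j\rangle$ and then matching every power of $v$ with Lusztig's normalization $v^{-r\varepsilon_i}$ and divided powers $E_i^{(r)}$.  This requires care with three interlocking conventions: the twisting in the Hall multiplication $\langle u_\alpha\rangle\langle u_\beta\rangle=v^{-\langle\beta,\alpha\rangle}\sum g^\lambda_{\alpha\beta}\langle u_\lambda\rangle$, the Euler form values $\langle i,i\rangle=\varepsilon_i$, $\langle i,j\rangle+\langle j,i\rangle=(i,j)=a_{ij}\varepsilon_i$, and the rescaling hidden in $\theta$.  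I expect the cleanest route is to first reduce to the rank-$2$ species on $\{i,j\}$ (everything in sight is supported there), invoke Ringel's explicit description of the AR-quiver and the reflection functor in types $A_2,B_2,G_2,A_1\times A_1$, and then do the bookkeeping of $v$-powers once and for all; the higher-rank case follows because $\sigma_i^+$ only touches the $i$–$j$ edges and $T_i$ was \emph{defined} compatibly with the algebra structure (Proposition~\ref{prop 4 ref functor Hall alg}, Theorem~\ref{thm 4 ref functotr Drinfeld d}).  Everything else is formal: once the generators match, uniqueness of algebra automorphisms extending a map on generators finishes the proof.
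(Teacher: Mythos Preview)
The paper does not give a proof of this theorem; it is stated without argument, the result being taken from \cite{xy}.  Your outline is precisely the standard proof (and essentially the one carried out in \cite{xy}): since both $T_i$ and $T_{i,1}''$ are $\mathbb{Q}(v)$-algebra automorphisms, it suffices to compare them on the generators $K_\mu$, $E_j$, $F_j$, and your verifications for the torus and for the index $j=i$ are correct.

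One point to tighten in the $j\neq i$ step.  You propose to reduce to the rank-two species on $\{i,j\}$ and then invoke ``Ringel's explicit description of the AR-quiver and the reflection functor in types $A_2,B_2,G_2,A_1\times A_1$''.  But for an arbitrary Cartan datum the edge $\{i,j\}$ may satisfy $a_{ij}a_{ji}\geq 4$, so the rank-two piece can be tame or wild, not Dynkin.  This does not break your argument, because what you actually need is not a case-by-case AR-quiver analysis but the single Hall-algebra identity expressing $\langle u_{\sigma_i^+V_j}\rangle$ as the alternating sum $\sum_{r+s=-a_{ij}}(-1)^r v^{-r\varepsilon_i}\langle u_i\rangle^{(s)}\langle u_j\rangle\langle u_i\rangle^{(r)}$; this follows directly from the structure of $\sigma_i^+V_j$ (its only submodule composition series with respect to the simples of $\sigma_i\Lambda$ involves $-a_{ij}$ copies of $V_i$ and one copy of $V_j$) together with a M\"obius-type inversion among the $\langle u_i\rangle^{(s)}\langle u_j\rangle\langle u_i\rangle^{(r)}$.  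So replace the finite-type case list by this uniform computation and the proof goes through.
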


We keep the notations before. Recall that a sequence
$i_{1},\cdots,i_{m}$ is called a \emph{sink sequence} for $\Omega$,
provided  $i_{1}$ is a sink for $\Omega$, and for $1< t\leq m$, the
vertex $i_{t}$ is a sink for the orientation
$\sigma_{i_{t-1}}\cdots\sigma_{i_{1}}\Omega$. The definition of a
\emph{source sequence} is similar. The following proposition comes
from \cite{r5}.

\begin{prop}\label{prop 4 obtain irregular}
If we identify $\mathcal{D}_{\mathscr{C}}(\Delta)$ with $U$, then:

(1)For any preinjective module $V_{\alpha}$, there exists a source
sequence $i_{1},\cdots,i_{m}$ for $\Omega$ such that $$\langle
u_{\alpha}\rangle= \langle u_{\sigma^{+}_{i_{1}}\cdots
\sigma^{+}_{i_{m-1}}i_{m}}\rangle= T_{i_{1}}\cdots
T_{i_{m-1}}\langle u_{i_{m}}\rangle = T''_{i_{1},1}\cdots
T''_{i_{m-1},1}(E_{i_{m}}).$$

(2)For any preinjective module $V_{\alpha}$, there exists a sink
sequence $i_{1},\cdots,i_{m}$ for $\Omega$ such that $$\langle
u_{\alpha}\rangle= \langle u_{\sigma^{-}_{i_{1}}\cdots
\sigma^{-}_{i_{m-1}}i_{m}}\rangle= T'_{i_{1}}\cdots
T'_{i_{m-1}}\langle u_{i_{m}}\rangle=T'_{i_{1},-1}\cdots
T'_{i_{m-1},-1}(E_{i_{m}}).$$
\end{prop}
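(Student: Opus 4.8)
The plan is to reduce the statement to the already-established dictionary between reflection functors and Lusztig's symmetries (Theorem \ref{thm 4 ref functor coincide symmetries}), and to use the classical structure theory of preinjective (and preprojective) modules over a hereditary algebra from \cite{bgp}, \cite{r5}. The key classical input is that any preinjective indecomposable module $V_\alpha$ over $\Lambda$ can be ``combed down'' to a simple module by a sequence of Coxeter-type reflections: there is a source sequence $i_1,\dots,i_{m}$ for $\Omega$ such that $\sigma^{+}_{i_{m-1}}\cdots\sigma^{+}_{i_{1}}V_\alpha$ is the simple module $V_{i_m}$ (with $i_m$ a sink for the orientation $\sigma_{i_{m-1}}\cdots\sigma_{i_1}\Omega$), and conversely $V_\alpha=\sigma^{+}_{i_1}\cdots\sigma^{+}_{i_{m-1}}V_{i_m}$. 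Equivalently: preinjectives are exactly the modules obtained from simples by iterated reflection functors at sources, and at each stage, before applying the relevant $\sigma^{+}$, the module in question lies in $\textrm{rep-}\mathscr{S}\langle i\rangle$, i.e.\ has no summand isomorphic to the simple at the vertex being reflected. This is precisely the situation in which Proposition \ref{prop 4 ref functor Hall alg} applies and gives $T_{i}(\langle u_\beta\rangle) = \langle u_{\sigma^{+}_i\beta}\rangle$.

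First I would fix $V_\alpha$ preinjective and invoke the classical fact just described to produce the source sequence $i_1,\dots,i_m$ with $V_\alpha = \sigma^{+}_{i_1}\cdots\sigma^{+}_{i_{m-1}}V_{i_m}$, where at step $t$ the module $\sigma^{+}_{i_{t+1}}\cdots\sigma^{+}_{i_{m-1}}V_{i_m}$ (a preinjective over $\sigma_{i_t}\cdots\sigma_{i_1}\Lambda$, with $i_t$ a sink for that orientation) has no summand isomorphic to the simple at $i_t$, so it lies in the relevant $\textrm{rep-}\mathscr{S}\langle i_t\rangle$. Then I would apply Proposition \ref{prop 4 ref functor Hall alg} repeatedly to get
\begin{displaymath}
\langle u_\alpha\rangle = \langle u_{\sigma^{+}_{i_1}\cdots\sigma^{+}_{i_{m-1}}i_m}\rangle = T_{i_1}T_{i_2}\cdots T_{i_{m-1}}\langle u_{i_m}\rangle,
\end{displaymath}
noting that each $T_{i_t}$ here is the extension to the Drinfeld double (Theorem \ref{thm 4 ref functotr Drinfeld d}) but restricts to the Hall-algebra isomorphism of Proposition \ref{prop 4 ref functor Hall alg} on the subalgebra $\mathscr{H}(\Lambda)\langle i_t\rangle$, so the formula $T_{i_t}(\langle u_\beta\rangle)=\langle u_{\sigma^{+}_{i_t}\beta}\rangle$ is legitimate at every stage. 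Finally, identifying $\mathcal{D}_{\mathscr{C}}(\Delta)$ with $U$ via Theorem \ref{thm 3 Drinfeld d isom quantum group}, we have $\langle u_{i_m}\rangle \mapsto E_{i_m}$ and, by Theorem \ref{thm 4 ref functor coincide symmetries}(1), $T_{i_t}=T''_{i_t,1}$, which yields $\langle u_\alpha\rangle = T''_{i_1,1}\cdots T''_{i_{m-1},1}(E_{i_m})$. This proves part (1). Part (2) is entirely symmetric: one uses the dual structure theory (preinjectives are combed to simples by reflection functors $\sigma^{-}$ at sinks, giving a sink sequence), the source-version of Proposition \ref{prop 4 ref functor Hall alg}, and Theorem \ref{thm 4 ref functor coincide symmetries}(2) to replace each $T'_{i_t}$ by $T'_{i_t,-1}$. (As written, statement (2) also says ``preinjective''; presumably one of the two parts should read ``preprojective'', but the argument is identical after swapping the roles of sinks/sources and $\sigma^{\pm}$.)

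The only genuinely nontrivial ingredient is the classical claim that every preinjective module is reached from a simple module by an appropriate iterated reflection functor, with the no-summand condition holding at each stage so that Proposition \ref{prop 4 ref functor Hall alg} is applicable — but this is exactly the content of the Bernstein-Gelfand-Ponomarev analysis of preinjective components and its species generalization in \cite{bgp}, \cite{dr}, \cite{r5}, and we are entitled to cite it. Concretely, one picks a source $i_1$ of $\Omega$; if $V_\alpha\cong V_{i_1}$ we are done, otherwise $V_\alpha$ has no summand $V_{i_1}$ (being indecomposable and not equal to it), apply $\sigma^{-}_{i_1}$ — wait, here one must be careful about whether to start reflecting at a source or a sink, and in which direction the preinjective moves; the bookkeeping of which sequence ($i_1$ a source and we use $\sigma^{+}$, reading the product one way) matches the statement is the part that requires attention but no depth. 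So the main ``obstacle'' is really just correctly lining up the orientation conventions and the order of the $T_i$'s with the conventions fixed in Sections \ref{section 3} and \ref{section 4}; once that is pinned down, the proof is a short induction on $m$ using Proposition \ref{prop 4 ref functor Hall alg} and Theorem \ref{thm 4 ref functor coincide symmetries}.
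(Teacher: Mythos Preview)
Your proposal is correct, and in fact does more than the paper: the paper gives no proof of this proposition at all, it simply attributes the result to \cite{r5} (``The following proposition comes from \cite{r5}''). Your outline---use the BGP/Dlab--Ringel structure theory of preinjective (resp.\ preprojective) modules to produce the source (resp.\ sink) sequence reducing $V_\alpha$ to a simple, then iterate Proposition~\ref{prop 4 ref functor Hall alg} to get the Hall-algebra identity, then apply Theorem~\ref{thm 4 ref functor coincide symmetries} to translate the $T_i$'s into Lusztig's $T''_{i,1}$'s---is exactly the argument one reconstructs from \cite{r5} together with \cite{xy}, and is the intended content behind the citation. Your caution about aligning the sink/source conventions with the direction of the maps $T_i:\mathscr{H}(\Lambda)\langle i\rangle\to\mathscr{H}(\sigma_i\Lambda)\langle i\rangle$ is well placed: the point is that if $i_1,\dots,i_m$ is a source sequence for $\Omega$, then reading the composite $T_{i_1}\cdots T_{i_{m-1}}$ from right to left, each $i_t$ is a sink of the orientation $\sigma_{i_t}\cdots\sigma_{i_1}\Omega$ at the moment $T_{i_t}$ is applied, so Proposition~\ref{prop 4 ref functor Hall alg} is available at every stage. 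Your parenthetical observation that one of the two parts should presumably read ``preprojective'' rather than ``preinjective'' is also correct.
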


\section{The elements in Hall algebras corresponding to exceptional
modules}\label{section 5} Let $\Lambda$ be a finite dimensional
hereditary $k$-algebra as in Section \ref{section 3}. Denote by
mod-$\Lambda$ the category of finite-dimensional $\Lambda$-modules.

Recall that a $\Lambda$-module $V_{\alpha}$ is called
\textit{exceptional} if
$\Ext^{1}_{\Lambda}(V_{\alpha},V_{\alpha})=0$. A pair of
indecomposable exceptional modules $(V_{\alpha},V_{\beta})$ is
called an \textit{exceptional pair} if
$\Hom_{\Lambda}(V_{\beta},V_{\alpha})=\Ext^{1}_{\Lambda}(V_{\beta},V_{\alpha})=0$.
A sequence of indecomposable $\Lambda$-modules
$(V_{\alpha_{1}},V_{\alpha_{2}},...,V_{\alpha_{n}})$ is called an
\textit{exceptional sequence} if any pair
$(V_{\alpha_{i}},V_{\alpha_{j}})$ with $i<j$ is exceptional. An
exceptional sequence
$(V_{\alpha_{1}},V_{\alpha_{2}},...,V_{\alpha_{n}})$ is said to be
\textit{complete} if $n=|I|$.

By Crawley-Boevey \cite{cb} and Ringel \cite{r2} we know that there
is a nice braid group action on the set of complete exceptional
sequences by which we can obtain all exceptional modules from an
exceptional sequence consisting of simple modules. We will briefly
recall the theory.

The braid group action is based on the following results: For any
exceptional sequence
$(V_{\alpha_{1}},V_{\alpha_{2}},...,V_{\alpha_{s}})$, let $\mathcal
{C}(\alpha_{1},\alpha_{2},...,\alpha_{s})$ be the smallest full
subcategory of mod-$\Lambda$ which contains
$V_{\alpha_{1}},V_{\alpha_{2}},...,V_{\alpha_{s}}$ and is closed
under extensions, kernels of epimorphisms and cokernels of
monomorphisms. $\mathcal{C}(\alpha_{1},\alpha_{2},...,\alpha_{s})$
is equivalent to mod-$\Lambda'$ where $\Lambda'$ is a finite
dimensional hereditary algebra with $s$ isomorphism classes of
simple modules. Furthermore, the canonical embedding of $\mathcal
{C}(\alpha_{1},\alpha_{2},...,\alpha_{s})$ into mod-$\Lambda$ is
exact and induces isomorphisms on both Hom and Ext.

In particular, the results above holds for any exceptional pair
$(V_{\alpha},V_{\beta})$. That is, $\mathcal{C}(\alpha,\beta)$ is
equivalent to the module category of a generalized Kronecker algebra
which has no regular exceptional modules. Hence
$(V_{\alpha},V_{\beta})$ have to be slice modules in the
preprojective component or preinjective component of
$\mathcal{C}(\alpha,\beta)$ or the orthogonal pair, i.e.
$V_{\alpha}$ is the simple injective and $V_{\beta}$ is the simple
projective. Thus, for an exceptional pair $(V_{\alpha},V_{\beta})$,
there are unique modules $L(\alpha,\beta)$ and $R(\alpha,\beta)$
such that $(L(\alpha,\beta),V_{\alpha})$ and
$(V_{\beta},R(\alpha,\beta))$ are exceptional pairs in
$\mathcal{C}(\alpha,\beta)$.

Let $\mathcal{V}=(V_{\alpha_{1}},V_{\alpha_{2}},...,V_{\alpha_{s}})$
be an exceptional sequence in mod-$\Lambda$. For $1\le i\le s$,
Define
$\sigma_{i}\mathcal{V}=(V_{\beta_{1}},V_{\beta_{2}},...,V_{\beta_{s}})$,
where
\begin{displaymath}
V_{\beta_{j}}=
\begin{cases}
V_{\alpha_{i+1}} &
\textrm{if $j=i$}\\
R(\alpha_{i},\alpha_{i+1}) & \textrm{if $j=i+1$}\\
V_{\alpha_{j}} & \textrm{if $j\notin\{i,i+1\}$}
\end{cases}
\end{displaymath}

Also, define
$\sigma^{-1}_{i}\mathcal{V}=(V_{\gamma_{1}},V_{\gamma_{2}},...,V_{\gamma_{s}})$,
where
\begin{displaymath}
V_{\gamma_{j}}=
\begin{cases}
L(\alpha_{i},\alpha_{i+1}) &
\textrm{if $j=i$}\\
V_{\alpha_{i}} & \textrm{if $j=i+1$}\\
V_{\alpha_{j}} & \textrm{if $j\notin\{i,i+1\}$}
\end{cases}
\end{displaymath}

Denote by $\mathcal{B}_{s-1}$ the group generated by
$\sigma_{1},\sigma_{2},...,\sigma_{s-1}$. The above definitions give
an action of $\mathcal{B}_{s-1}$ on the set of exceptional sequences
of length $s$. Moreover, $\sigma_{1},\sigma_{2},...,\sigma_{s-1}$
satisfy the braid relations:
\begin{displaymath}
\begin{cases}
\sigma_{i}\sigma_{i+1}\sigma_{i}=\sigma_{i+1}\sigma_{i}\sigma_{i+1}
& \text{for $1\le i\le s-1$}\\
\sigma_{i}\sigma_{j}=\sigma_{j}\sigma_{i} & \text{for $|i-j|\ge 2$}
\end{cases}
\end{displaymath}

So $\mathcal{B}_{s-1}$ is the braid group of $s-1$ generators. In
particular, in the case $s=|I|$, we have the braid group action on
the set of complete exceptional sequences. This action is transitive
according to Crawley-Boevey \cite{cb} and Ringel \cite{r2}.

Since any indecomposable exceptional module can be enlarged to a
complete exceptional sequence, we could obtain all indecomposable
exceptional modules via the braid group action from any given
complete exceptional sequence, in particular, the exceptional
sequence consisting of all the simple modules. In \cite{cx}, an
explicit inductive algorithm is given to express $\langle
u_{\lambda}\rangle$ as the combinations of elements $\langle
u_{i}\rangle$ if $V_{\lambda}$ is an indecomposable exceptional
module. We write down the formulas with some modifications, for the
definition of $_{\alpha}\delta$ and $\delta_{\alpha}$ in Section
\ref{section 3} is different from that in \cite{cx}.

\begin{thm}\label{thm 5 exc sequence}
For $1\le s\le |I|$, let
$\mathscr{B}=\langle\sigma_{1},\sigma_{2},...,\sigma_{s-1}\rangle$
be the braid group on $s-1$ generators,
$\mathcal{V}=(V_{\alpha_{1}},V_{\alpha_{2}},...,V_{\alpha_{s}})$ any
exceptional sequence of length $s$ in mod-$\Lambda$. Denote by
\begin{equation*}
m(i,i+1)=\frac{\langle\alpha_{i},\alpha_{i+1}\rangle}{\langle\alpha_{i+1},\alpha_{i+1}\rangle}=2\frac{(\alpha_{i},\alpha_{i+1})}{(\alpha_{i+1},\alpha_{i+1})}
\end{equation*}
and
\begin{displaymath}
n(i,i+1)=\frac{\langle\alpha_{i},\alpha_{i+1}\rangle}{\langle\alpha_{i},\alpha_{i}\rangle}=2\frac{(\alpha_{i},\alpha_{i+1})}{(\alpha_{i},\alpha_{i})}
\end{displaymath}
and assume that
$\sigma_{i}\mathcal{V}=(V_{\beta_{1}},V_{\beta_{2}},...,V_{\beta_{s}})$,
$\sigma^{-1}_{i}\mathcal{V}=(V_{\gamma_{1}},V_{\gamma_{2}},...,V_{\gamma_{s}})$
for $1\le i\le {s-1}$.

Then, in the Hall algebra $\mathscr{H}(\Lambda)$, we have

\emph{(1)} If $m(i,i+1)\dimv V_{\alpha_{i+1}}>\dimv V_{\alpha_{i}}$,
then
\begin{gather*}
\langle
u_{\beta_{i+1}}\rangle=\sum_{r=0}^{m(i,i+1)-1}(-1)^{r}v^{2\dim
V_{\alpha_{i}}}v^{\varepsilon(\alpha_{i})}(v^{-\varepsilon(\alpha_{i+1})})^{(m(i,
i+1)^{2}-m(i,i+1)r+r)}\\\times \langle
u_{\alpha_{i+1}}\rangle^{(r)}\delta_{\alpha_{i}}(\langle
u_{\alpha_{i+1}}\rangle^{(m(i,i+1)-r)}).
\end{gather*}

\emph{(2)} If $0<m(i,i+1)\dimv V_{\alpha_{i+1}}<\dimv
V_{\alpha_{i}}$, then
\begin{displaymath}
\langle u_{\beta_{i+1}}\rangle=\frac{v^{2m(i,i+1)\dim
V_{\alpha_{i+1}}}}{[m(i,i+1)]!_{\varepsilon(\alpha_{i+1})}}(_{\alpha_{i+1}}\delta)^{m(i,i+1)}(\langle
u_{\alpha_{i}}\rangle).
\end{displaymath}

\emph{(3)} If $m(i,i+1)\le 0$, then
\begin{displaymath}
\langle
u_{\beta_{i+1}}\rangle=\sum_{r=0}^{-m(i,i+1)}(-1)^{r}(v^{-r\varepsilon(\alpha_{i+1})})\langle
u_{\alpha_{i+1}}\rangle^{(r)}\langle u_{\alpha_{i}}\rangle\langle
u_{\alpha_{i+1}}\rangle^{(-m(i,i+1)-r)}.
\end{displaymath}

\emph{(1')} If $n(i,i+1)\dimv V_{\alpha_{i}}>\dimv
V_{\alpha_{i+1}}$, then
\begin{gather*}
\langle
u_{\gamma_{i}}\rangle=\sum_{r=0}^{n(i,i+1)-1}(-1)^{r}v^{2\dim
V_{\alpha_{i+1}}}v^{\varepsilon(\alpha_{i+1})}(v^{-\varepsilon(\alpha_{i})})
^{(n(i,i+1)^{2}-n(i,i+1)r+r)}\\\times(_{\alpha_{i+1}}\delta(\langle
u_{\alpha_{i}}\rangle^{(n(i,i+1)-r)}))\langle
u_{\alpha_{i}}\rangle^{(r)}.
\end{gather*}

\emph{(2')}If $0<n(i,i+1) \dimv V_{\alpha_{i}}<\dimv
V_{\alpha_{i+1}}$, then
\begin{displaymath}
\langle u_{\gamma_{i}}\rangle=\frac{v^{2n(i,i+1)\dim
V_{\alpha_{i}}}}{[n(i,i+1)]!_{\varepsilon(\alpha_{i})}}(\delta_{\alpha_{i}})^{n(i,i+1)}(\langle
u_{\alpha_{i+1}}\rangle).
\end{displaymath}

\emph{(3')}If $n(i,i+1)\le 0$, then
\begin{displaymath}
\langle
u_{\gamma_{i}}\rangle=\sum_{r=0}^{-n(i,i+1)}(-1)^{r}(v^{-r\varepsilon(\alpha_{i})})\langle
u_{\alpha_{i}}\rangle^{-n(i,i+1)-r}\langle
u_{\alpha_{i+1}}\rangle\langle u_{\alpha_{i}}\rangle^{(r)}.
\end{displaymath}
\end{thm}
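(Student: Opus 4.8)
The plan is to reduce every identity to a rank-two computation in a generalized Kronecker category, read it off from an explicit short exact sequence, and track the $v$-powers at the end. First I would invoke the reduction recalled above: since $(V_{\alpha_i},V_{\alpha_{i+1}})$ is an exceptional pair, $\mathcal{C}(\alpha_i,\alpha_{i+1})$ is equivalent to $\textrm{mod-}\Lambda'$ for a generalized Kronecker algebra $\Lambda'$ with no regular exceptional modules, and the canonical embedding $\mathcal{C}(\alpha_i,\alpha_{i+1})\hookrightarrow\textrm{mod-}\Lambda$ is exact and preserves $\Hom$ and $\Ext$. Hence it preserves every ingredient of the formulas: the dimension vectors (so $\dimv$, $\dim_k$, the Euler forms $\langle-,-\rangle$ and $(-,-)$, the integers $m(i,i+1),n(i,i+1)$, and $\varepsilon(\gamma)=\langle\gamma,\gamma\rangle$), the Hall numbers $g^{\lambda}_{\mu\nu}$ for modules in $\mathcal{C}(\alpha_i,\alpha_{i+1})$ and hence the products $\langle u_\mu\rangle\langle u_\nu\rangle$, and, by Remark \ref{rem 3 derivation}, the operators $\delta_{\alpha_i}$ and $_{\alpha_{i+1}}\delta$ on the span of such modules. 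So it suffices to treat the case $\Lambda=\Lambda'$. Moreover the duality $D=\Hom_k(-,k):\textrm{mod-}\Lambda\to\textrm{mod-}\Lambda^{\mathrm{op}}$ exchanges $L(\alpha,\beta)$ with $R(\alpha,\beta)$, exchanges $_\gamma\delta$ with $\delta_\gamma$, and exchanges $m$ with $n$, so (1$'$)--(3$'$) follow from (1)--(3) by dualizing; I treat only the $\beta$-formulas.

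In a generalized Kronecker category with no regular exceptional modules, every exceptional module is preprojective or preinjective, and $R(\alpha_i,\alpha_{i+1})$ --- the unique module making $(V_{\alpha_{i+1}},R(\alpha_i,\alpha_{i+1}))$ an exceptional pair generating the same subcategory --- is described by an explicit short exact sequence according to the sign of $m(i,i+1)$. If $m(i,i+1)\le 0$ then $\Hom_\Lambda(V_{\alpha_i},V_{\alpha_{i+1}})=0$ and $R(\alpha_i,\alpha_{i+1})$ is the middle term of the universal extension of $V_{\alpha_i}$ by $-m(i,i+1)$ copies of $V_{\alpha_{i+1}}$; here $V_{\alpha_{i+1}}$ and $V_{\alpha_i}$ play the role of the two simples of a ``negative'' rank-two datum, and the degenerate orthogonal case gives $R(\alpha_i,\alpha_{i+1})=V_{\alpha_i}$, which is the $m(i,i+1)=0$ instance of (3). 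If $m(i,i+1)>0$ then $\Hom_\Lambda(V_{\alpha_i},V_{\alpha_{i+1}})\ne 0$ and the canonical evaluation map $V_{\alpha_i}\to V_{\alpha_{i+1}}\otimes_{\End V_{\alpha_{i+1}}}\!\Hom_\Lambda(V_{\alpha_i},V_{\alpha_{i+1}})\cong V_{\alpha_{i+1}}^{\oplus m(i,i+1)}$ is injective exactly when $m(i,i+1)\dimv V_{\alpha_{i+1}}>\dimv V_{\alpha_i}$ --- then $R(\alpha_i,\alpha_{i+1})$ is its cokernel, of dimension vector $m(i,i+1)\dimv V_{\alpha_{i+1}}-\dimv V_{\alpha_i}$ --- and surjective exactly when $0<m(i,i+1)\dimv V_{\alpha_{i+1}}<\dimv V_{\alpha_i}$, with $R(\alpha_i,\alpha_{i+1})$ its kernel. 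Equality cannot occur, since $R(\alpha_i,\alpha_{i+1})$ is indecomposable exceptional. This gives exactly the three cases.

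I would then translate each exact sequence into a Hall-algebra identity. When $m(i,i+1)\le 0$, the vanishing $\Hom_\Lambda(V_{\alpha_i},V_{\alpha_{i+1}})=0$ lets one expand each product $\langle u_{\alpha_{i+1}}\rangle^{(r)}\langle u_{\alpha_i}\rangle\langle u_{\alpha_{i+1}}\rangle^{(-m(i,i+1)-r)}$ via $\langle u_\mu\rangle\langle u_\nu\rangle=v^{-\langle\nu,\mu\rangle}\sum_\lambda g^{\lambda}_{\mu\nu}\langle u_\lambda\rangle$ with controlled coefficients, and the alternating sum over $r$ collapses onto the single universal-extension module $R(\alpha_i,\alpha_{i+1})$; this is precisely the combinatorics behind $T''_{i,1}(E_j)$ on simple modules (Section \ref{section 2}, and Section \ref{section 4}). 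When $m(i,i+1)>0$ I would iterate a one-sided derivation. In the kernel case (2), where $0\to R(\alpha_i,\alpha_{i+1})\to V_{\alpha_i}\to V_{\alpha_{i+1}}^{\oplus m(i,i+1)}\to 0$: $_{\alpha_{i+1}}\delta$ is, up to a unit, $r'_{\alpha_{i+1}}$ and is a twisted derivation (Proposition \ref{prop 3 derivation homo}) removing one copy of $V_{\alpha_{i+1}}$ from the top of a module, so applying it $m(i,i+1)$ times to $\langle u_{\alpha_i}\rangle$ runs through this sequence and isolates $\langle u_{\beta_{i+1}}\rangle$ with an explicit coefficient. In the cokernel case (1), where $0\to V_{\alpha_i}\to V_{\alpha_{i+1}}^{\oplus m(i,i+1)}\to R(\alpha_i,\alpha_{i+1})\to 0$, the complementary derivation $\delta_{\alpha_i}\propto r_{\alpha_i}$ removes $V_{\alpha_i}$ from the bottom; applied to $\langle u_{\alpha_{i+1}}\rangle^{(m(i,i+1)-r)}$ and sandwiched between the divided powers $\langle u_{\alpha_{i+1}}\rangle^{(r)}$, the alternating sum over $r$ cancels all unwanted summands and leaves $\langle u_{\beta_{i+1}}\rangle$.

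It then remains to pin down the $v$-powers, which are forced by the normalization $\langle u_\gamma\rangle=v^{-\dim_kV_\gamma+\varepsilon(\gamma)}u_\gamma$, the divided-power normalization $\langle u_\gamma\rangle^{(t)}=(v^{\varepsilon(\gamma)})^{t(t-1)}u_{t\gamma}$, the constant $(v^2)^{-\dim_kV_\gamma+\varepsilon(\gamma)}/a_\gamma$ in the definition of $_\gamma\delta$ and $\delta_\gamma$, and the Euler-form values on the concrete dimension vectors, via $\langle\alpha_i,\alpha_{i+1}\rangle=m(i,i+1)\varepsilon(\alpha_{i+1})=n(i,i+1)\varepsilon(\alpha_i)$ and $\langle\alpha_{i+1},\alpha_i\rangle=(\alpha_i,\alpha_{i+1})-\langle\alpha_i,\alpha_{i+1}\rangle$. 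Up to this bookkeeping the identities are those of \cite{cx}; since our $_\gamma\delta,\delta_\gamma$ differ from the ones there by an explicit power of $v$, one may alternatively deduce the statement by substituting that power into the formulas of \cite{cx} and simplifying. I expect the real work --- and the place where a sign or exponent slip would hide --- to be exactly this simultaneous bookkeeping of exponents through the several normalizations, together with verifying that the case boundaries are consistent with the three displayed formulas.
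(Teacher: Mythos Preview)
Your sketch is correct and is essentially the argument of \cite{cx}, which is exactly what the paper does: the paper does not prove this theorem but quotes it from \cite{cx}, noting only that the displayed formulas differ from those there because the derivations $_{\alpha}\delta,\delta_{\alpha}$ are normalized differently (see the sentence just before the theorem in Section~\ref{section 5}). Your final paragraph already identifies this as the only new content, so there is nothing to add beyond carrying out that substitution carefully.
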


By this theorem we can see that for any indecomposable exceptional
module $V_{\lambda}$, $\langle u_{\lambda}\rangle$ lies in the
generic composition algebra $\mathscr{C}(\Delta)$.

\section{Main results}\label{section 6}
We keep the notations as before. In this section we will identify
the quantum group $U$ (resp. the positive part $U^{+}$, the
$\mathbb{Z}$-form $U_{\mathbb{Z}}^{+}$) with the reduced Drinfeld
double $\mathcal{D}_{\mathscr{C}}(\Delta)$ (resp. the generic
composition algebra $\mathscr{C}(\Delta)$, the integral generic
composition algebra $\mathscr{C}_{\mathbb{Z}}(\Delta)$).

In Section \ref{section 5} we have seen that for any indecomposable
exceptional $\Lambda$-module $V_{\lambda}$, $\langle u_{\lambda}
\rangle$ lies in the quantum group. Our first result is a stronger
assertion which says that $\langle u_{\lambda}\rangle$ lies in the
integral form of the quantum group and here $V_{\lambda}$ can be any
(not only indecomposable) exceptional module.

\begin{thm}\label{thm 6 exc in integral comp alg}
Let $\Lambda$ be a finite-dimensional hereditary $k$-algebra,
$V_{\lambda}$ an exceptional $\Lambda$-module, then $\langle
u_{\lambda}\rangle$ lies in $U_{\mathbb{Z}}^{+}$.
\end{thm}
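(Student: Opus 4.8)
The plan is to reduce the statement, in two steps, to the rank-two situation, where it can be read off from Section~\ref{section 4}. As a first step I would reduce to \emph{indecomposable} modules together with all their divided powers. Write $V_\lambda=\bigoplus_j V_{\lambda_j}^{\,n_j}$ with the $V_{\lambda_j}$ pairwise non-isomorphic indecomposable. Since $\Ext^{1}_{\Lambda}(V_\lambda,V_\lambda)=0$ there are no extensions among the $V_{\lambda_j}$, and, $\mathrm{add}\,V_\lambda$ being directed, we may reorder so that $(V_{\lambda_1},\dots,V_{\lambda_m})$ is an exceptional sequence; in particular $\Hom_{\Lambda}(V_{\lambda_j},V_{\lambda_l})=0$ for $l<j$. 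Feeding these vanishings into the multiplication rule for the $\langle u_\alpha\rangle$, each partial product $\langle u_{\lambda_1}\rangle^{(n_1)}\cdots\langle u_{\lambda_k}\rangle^{(n_k)}$ collapses to a single $\langle u_{\bullet}\rangle$, so $\langle u_\lambda\rangle=v^{\,c}\langle u_{\lambda_1}\rangle^{(n_1)}\cdots\langle u_{\lambda_m}\rangle^{(n_m)}$ for some $c\in\mathbb{Z}$ (using also that $\langle u_{n\mu}\rangle$ and $\langle u_\mu\rangle^{(n)}$ differ by an integer power of $v$). As $U^{+}_{\mathbb{Z}}$ is a $\mathbb{Z}[v,v^{-1}]$-subalgebra, it suffices to prove $\langle u_\mu\rangle^{(n)}\in U^{+}_{\mathbb{Z}}$ for every indecomposable exceptional $V_\mu$ and every $n\ge 0$.

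The second step is an induction on $\dim_k V_\mu$. If $V_\mu$ is simple then $\langle u_\mu\rangle=E_\mu$ and $\langle u_\mu\rangle^{(n)}=E_\mu^{(n)}\in U^{+}_{\mathbb{Z}}$. If $V_\mu$ is not simple, the theory of exceptional sequences recalled in Section~\ref{section 5} (Crawley--Boevey, Ringel; Schofield's induction) furnishes an exceptional pair $(V_\alpha,V_\beta)$ with $\dim_kV_\alpha,\dim_kV_\beta<\dim_kV_\mu$, with $V_\mu\in\mathcal{C}(\alpha,\beta)$, and with $V_\alpha,V_\beta$ the two simple objects of $\mathcal{C}(\alpha,\beta)\simeq\mathrm{mod}\,\Theta$ for a generalized Kronecker algebra $\Theta$. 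In $\mathrm{mod}\,\Theta$ every exceptional module is preprojective or preinjective, so Proposition~\ref{prop 4 obtain irregular} applied to $\Theta$ exhibits $\langle u_\mu\rangle_{\Theta}$ as the image of some $E^{\Theta}_{i}$ under a composite of Lusztig's symmetries. Since those symmetries are automorphisms of $U_{\mathbb{Z}}$ respecting divided powers, $\langle u_\mu\rangle^{(n)}_{\Theta}$ is the image of $E^{\Theta,(n)}_{i}$ under the same composite, hence lies in $U_{\mathbb{Z}}(\Theta)\cap U^{+}(\Theta)=U^{+}_{\mathbb{Z}}(\Theta)=\mathscr{C}_{\mathbb{Z}}(\Theta)$.

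It remains to transport this to $\Lambda$. The embedding $\mathcal{C}(\alpha,\beta)\hookrightarrow\mathrm{mod}\,\Lambda$ is exact, induces isomorphisms on $\Hom$ and $\Ext^{1}$, and $\mathcal{C}(\alpha,\beta)$ is closed under the sub/quotients entering Hall products; hence $u_X\mapsto u_X$ is an algebra embedding of Hall algebras, and, uniformly in the generic field tower, it yields a $\mathbb{Q}(v)$-algebra map $\Psi\colon\mathscr{C}(\Theta)\to\mathscr{C}(\Lambda)$ with $\Psi(\langle u_X\rangle_{\Theta})=\langle u_X\rangle_{\Lambda}$ for $X\in\mathcal{C}(\alpha,\beta)$. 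Because the endomorphism rings match, $\Psi$ carries $\langle u_{S^{\Theta}_{a}}\rangle^{(t)}$ to $\langle u_\alpha\rangle^{(t)}_{\Lambda}$, resp.\ $\langle u_\beta\rangle^{(t)}_{\Lambda}$, which lie in $U^{+}_{\mathbb{Z}}(\Lambda)$ by the inductive hypothesis; so $\Psi(\mathscr{C}_{\mathbb{Z}}(\Theta))\subseteq U^{+}_{\mathbb{Z}}(\Lambda)$, and therefore $\langle u_\mu\rangle^{(n)}_{\Lambda}=\Psi(\langle u_\mu\rangle^{(n)}_{\Theta})\in U^{+}_{\mathbb{Z}}(\Lambda)$, closing the induction.

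The main obstacle lies in the middle step. One must invoke the Schofield-type reduction in exactly the form ``a non-simple exceptional module is a module over a rank-two hereditary category whose two simples are strictly smaller exceptional modules,'' so that the induction is well-founded and bottoms out precisely at the simple $\Lambda$-modules; and one must verify that the category embedding $\mathcal{C}(\alpha,\beta)\hookrightarrow\mathrm{mod}\,\Lambda$ really does induce a homomorphism of \emph{integral generic} composition algebras --- that is, that Hall numbers, the normalizing powers of $v$ in $\langle u_{\bullet}\rangle$, and the divided-power denominators $[t]!$ all transport correctly and compatibly with the passage from a single field to the generic tower. Everything else is bookkeeping with Section~\ref{section 4} and the already-established stability of $U^{+}_{\mathbb{Z}}$ under Lusztig's symmetries.
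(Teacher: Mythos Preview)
Your proposal is correct and follows essentially the same strategy as the paper's proof in Section~\ref{section 7}: reduce an indecomposable exceptional $V_\mu$ to a rank-two subcategory $\mathcal{C}(\alpha,\beta)$ in which it is preprojective or preinjective (the paper cites Ringel \cite{r5} for this; you phrase it as Schofield induction), apply Proposition~\ref{prop 4 obtain irregular} there, and use that Lusztig's symmetries preserve the integral form. The transport step you worry about is exactly the content of the paper's Lemma~\ref{lem 7 relative derivation}, and the well-foundedness of the induction holds because the simples of $\mathcal{C}(\alpha,\beta)$ are proper subquotients of $V_\mu$.

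The only notable difference is in the passage from indecomposable to general exceptional modules. You order the indecomposable summands into an exceptional sequence (using directedness of $\mathrm{add}\,V_\lambda$) so that the product $\langle u_{\lambda_1}\rangle^{(n_1)}\cdots\langle u_{\lambda_m}\rangle^{(n_m)}$ collapses to a single $v$-power times $\langle u_\lambda\rangle$; the paper instead reduces by induction to $V_{s\mu}\oplus V_{t\nu}$ and computes the relevant Hall number explicitly via the Riedtmann--Peng formula (Lemma~\ref{lem 7 Peng formula}) and the automorphism-group formulas (Lemma~\ref{lem 7 calcul End of decomposables}). Your version is a bit more conceptual and avoids those auxiliary lemmas, while the paper's version makes the $v$-power coefficient completely explicit; either way the argument goes through.
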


In Section \ref{section 8} we will prove that $\langle
u_{\lambda}\rangle\in L(\infty)$. So $\langle u_{\lambda}\rangle$
lies in $L_{\mathbb{Z}}(\infty)$. By abuse of language, we denote
the image of $\langle u_{\lambda}\rangle$ in
$L_{\mathbb{Z}}(\infty)/v^{-1}L_{\mathbb{Z}}(\infty)$ still by
$\langle u_{\lambda}\rangle$. Our second result is

\begin{thm}\label{thm 6 exc in crystal basis}
Let $\Lambda$ be a finite-dimensional hereditary $k$-algebra. Then
for any exceptional module $V_{\lambda}$,
\begin{displaymath}
\langle u_{\lambda}\rangle\in B(\infty)\cup(-B(\infty)).
\end{displaymath}
i.e. $\langle u_{\lambda}\rangle$ lies in the crystal basis up to a
sign.
\end{thm}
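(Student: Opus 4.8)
The plan is to use Proposition \ref{prop 2 char of B}(c), which characterizes $B(\infty)\cup(-B(\infty))$ inside $L_{\mathbb{Z}}(\infty)/v^{-1}L_{\mathbb{Z}}(\infty)$ as precisely the set of elements $u$ with $(u,u)_{K,0}=1$. Assuming (as promised, to be proved in Section \ref{section 8}) that $\langle u_{\lambda}\rangle\in L_{\mathbb{Z}}(\infty)$, and knowing from Theorem \ref{thm 6 exc in integral comp alg} that $\langle u_{\lambda}\rangle\in U^{+}_{\mathbb{Z}}$, it suffices to show that $(\langle u_{\lambda}\rangle,\langle u_{\lambda}\rangle)_{K}\equiv 1 \pmod{v^{-1}A}$. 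So the whole theorem reduces to a computation of Kashiwara's pairing on the single element $\langle u_{\lambda}\rangle$.

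First I would introduce, as announced in the introduction for Section \ref{section 8}, Ringel's pairing $(-,-)_{R}$ on the Hall algebra $\mathscr{H}(\Lambda)$ (or on the composition algebra) — the symmetric bilinear form for which the basis elements $\{u_{\alpha}\}$ (suitably normalized, i.e. the $\langle u_{\alpha}\rangle$) are orthogonal with $(\langle u_{\alpha}\rangle,\langle u_{\alpha}\rangle)_{R}$ equal to an explicit power of $v$ times $1/a_{\alpha}$-type factors, coming from Green's formula / the Hopf pairing between $\mathscr{H}^{+}$ and $\mathscr{H}^{-}$ used to build the Drinfeld double. The key algebraic step is then to compare $(-,-)_{R}$ with Kashiwara's pairing $(-,-)_{K}$ under the identification $\mathscr{C}(\Delta)=U^{+}$: both are characterized by $(1,1)=1$ together with an adjunction property relating multiplication by a generator $\langle u_{i}\rangle = E_{i}$ to the derivation $r'_{i}=f'_{i}$ (Lemma \ref{lem 3 derivation coincide}). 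Since $(E_{i}x,y)_{K}=(x,f'_{i}(y))_{K}$ characterizes $(-,-)_{K}$ uniquely (Proposition \ref{prop 2 Kashiwara form}(a)), I would verify that Ringel's pairing satisfies the same adjunction — possibly up to a scalar renormalization of the form $(-,-)_{R}$ — and conclude $(-,-)_{K}$ and $(-,-)_{R}$ agree (after that renormalization) on $U^{+}$. This is where the bulk of the work lies: establishing the adjunction formula for Ringel's pairing is a direct but delicate Hall-algebra calculation using the definition of $r'_{i}$ and the multiplication/comultiplication formulas.

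With the two pairings identified, the theorem follows from evaluating $(\langle u_{\lambda}\rangle,\langle u_{\lambda}\rangle)_{R}$ for $V_{\lambda}$ exceptional. Here the exceptionality is essential: $\varepsilon(\lambda)=\dim_{k}\End_{\Lambda}V_{\lambda}$ and $\Ext_{\Lambda}(V_{\lambda},V_{\lambda})=0$, so $a_{\lambda}=|\Aut V_{\lambda}|$ and the relevant Green-type constants collapse to something whose leading term in $v$ is exactly $1$ once the normalization defining $\langle u_{\lambda}\rangle = v^{-\dim_{k}V_{\lambda}+\varepsilon(\lambda)}u_{\lambda}$ is taken into account; concretely one expects $(\langle u_{\lambda}\rangle,\langle u_{\lambda}\rangle)_{R}=1+(\text{lower order in } v^{-1})$, i.e. it lies in $1+v^{-1}A$. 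Therefore its image in $L(\infty)/v^{-1}L(\infty)$ satisfies $(\langle u_{\lambda}\rangle,\langle u_{\lambda}\rangle)_{K,0}=1$, and Proposition \ref{prop 2 char of B}(c) gives $\langle u_{\lambda}\rangle\in B(\infty)\cup(-B(\infty))$.

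The main obstacle I anticipate is precisely the pairing comparison: one must pin down the exact normalization constant relating $(-,-)_{R}$ to $(-,-)_{K}$ and check it is compatible with the $\mathbb{Z}$-form, so that the computed self-pairing of $\langle u_{\lambda}\rangle$ is genuinely $\equiv 1$ and not $\equiv$ some other unit of $A$ reducing to a nonzero scalar $\neq \pm 1$ mod $v^{-1}$. A secondary subtlety is that the argument inherently only controls $\langle u_{\lambda}\rangle$ up to sign — $(u,u)_{K,0}=1$ cannot distinguish $u$ from $-u$ — which is exactly why the sign cannot be removed at this stage and is deferred to the geometric argument in the last section. Establishing the still-missing input $\langle u_{\lambda}\rangle\in L(\infty)$ will itself use $(\langle u_{\lambda}\rangle,\langle u_{\lambda}\rangle)_{K}\in A$ via Proposition \ref{prop 2 char of L}(b), so the pairing computation does double duty.
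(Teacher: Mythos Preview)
Your proposal is correct and follows essentially the same route as the paper: introduce Ringel's pairing $(-,-)_{R}$, establish its adjunction with $r'_{i}=f'_{i}$ (the paper finds $(E_{i}x,y)_{R}=(1-v_{i}^{-2})^{-1}(x,f'_{i}(y))_{R}$, so the two pairings agree only modulo $v^{-1}A$, not on the nose), deduce that $(-,-)_{R,0}=(-,-)_{K,0}$ on $L(\infty)/v^{-1}L(\infty)$, and then compute $(\langle u_{\lambda}\rangle,\langle u_{\lambda}\rangle)_{R}\in 1+v^{-1}A$ directly from $|\End V_{\lambda}|/|\Aut V_{\lambda}|$ using exceptionality and Lemma \ref{lem 7 calcul End of decomposables}. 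Your anticipated normalization obstacle is resolved exactly by $(1-v_{i}^{-2})^{-1}\in 1+v^{-1}A$, and, as you noted, the same computation simultaneously yields $\langle u_{\lambda}\rangle\in L(\infty)$ via Proposition \ref{prop 2 char of L}(b).
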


\section{Proof of Theorem 6.1}\label{section 7} Theorem \ref{thm 5 exc sequence} provides an
inductive method to express $\langle u_{\lambda}\rangle$ as
combinations of the Chevalley generators. If we can prove that in
each step the coefficients of the formulas are in
$\mathbb{Z}[v,v^{-1}]$, we are done immediately for proving Theorem
\ref{thm 6 exc in integral comp alg}. Unfortunately we could not
achieve this since the derivations $_{\alpha}\delta$ and
$\delta_{\alpha}$ are not $U_{\mathbb{Z}}$-stable. Instead, we will
use Lusztig's symmetries which is known to be
$U_{\mathbb{Z}}$-stable.

First we introduce some notations. For any exceptional pair
$(V_{\alpha},V_{\beta})$, the subcategory
$\mathcal{C}(\alpha,\beta)$ (Recall section \ref{section 5}) is
equivalent to mod-$\Lambda'$ for some finite dimensional hereditary
$k'$-algebra $\Lambda'$ which has two simple modules. Then the
corresponding Hall algebra $\mathscr{H}(\Lambda')$ is a subalgebra
of $\mathscr{H}(\Lambda)$ and the composition algebra
$\mathscr{C}(\Lambda')$ is a subalgebra of $\mathscr{C}(\Lambda)$
(note that the simple $\Lambda'$-modules viewed as $\Lambda$-modules
are exceptional). Denote the Cartan datum of $\Lambda'$ by
$\Delta'$. The generic composition algebra $\mathscr{C}(\Delta')$ is
also a subalgebra of $\mathscr{C}(\Delta)$. Now denote the quantum
group associated to $\Delta'$ by $U'$. Then we have an embedding
$U'^{+}\hookrightarrow U^{+}$. The discussion here means that an
exceptional pair gives a sub-Hall algebra of $\mathscr{H}(\Lambda)$
which corresponds to a sub-quantum group (positive part) of $U^{+}$.

Thus everything in Section \ref{section 3} and \ref{section 4} works
for $\Lambda'$. Suppose that $k'$ is a finite field with
$q'=(v')^{2}$ elements where $v'=v^{a}$ for some positive integers
$a$. For $V_{\gamma}\in\mathcal{C}(\alpha,\beta)$ we use the
notation $\langle
u_{\gamma}\rangle'=(v')^{-\dim_{k'}V_{\gamma}+\varepsilon'(\gamma)}u_{\gamma}$
in $\mathscr{H}(\Lambda')$ and the derivations
$\delta_{\gamma}',{_{\gamma}\delta'}$ are well-defined. Denote the
two simple $\Lambda'$-modules by $V_{s_{1}},V_{s_{2}}$, the
corresponding elements in $\mathscr{H}(\Lambda')$ by $\langle
u_{s_{1}}\rangle'$ and $\langle u_{s_{2}}\rangle'$. We have the
(relative) symmetries $T_{s_{1}}$, $T_{s_{2}}$ and $T_{s_{1}}'$,
$T_{s_{2}}'$ as in Section \ref{section 4}.

\begin{lem}\label{lem 7 relative derivation}
For any $V_{\gamma}\in\mathcal{C}(\alpha,\beta)$, we have
\begin{align*}
&(1)\ \ \ \langle u_{\gamma}\rangle'=\langle u_{\gamma}\rangle;\\
&(2)\ \ \
\delta_{\gamma}|_{\mathcal{C}(\alpha,\beta)}=\delta_{\gamma}',\ \ \
_{\gamma}\delta|_{\mathcal{C}(\alpha,\beta)}=\ _{\gamma}\delta',
\end{align*}
where $\delta_{\gamma}|_{\mathcal{C}(\alpha,\beta)}$ and
$_{\gamma}\delta|_{\mathcal{C}(\alpha,\beta)}$ denote the
restrictions of $\delta_{\gamma}$ and $_{\gamma}\delta$ to
$\mathscr{H}(\Lambda')$.
\end{lem}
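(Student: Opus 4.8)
The plan is to reduce both statements to a dictionary relating the numerical invariants attached to a module in $\mathcal{C}(\alpha,\beta)$ computed over $\Lambda$ to those computed over $\Lambda'$. Recall from Section~\ref{section 5} that the inclusion $\mathcal{C}(\alpha,\beta)\hookrightarrow\textrm{mod-}\Lambda$ is exact and induces isomorphisms on $\Hom$ and $\Ext$, and that $\mathcal{C}(\alpha,\beta)\simeq\textrm{mod-}\Lambda'$ with $\Lambda'$ hereditary over the field $k'$ of $q'=q^{a}$ elements, so $v'=v^{a}$. First I would record, for $M,N\in\mathcal{C}(\alpha,\beta)$: (i) $\dim_{k}M=a\,\dim_{k'}M$, from $q^{\dim_{k}M}=|M|=(q')^{\dim_{k'}M}$; (ii) $\langle M,N\rangle=a\langle M,N\rangle'$ and $(M,N)=a(M,N)'$, since full faithfulness and exactness identify $\Hom_{\Lambda}(M,N)$ and $\Ext_{\Lambda}(M,N)$ with $\Hom_{\Lambda'}(M,N)$ and $\Ext_{\Lambda'}(M,N)$ as finite sets, and one counts elements as in (i); (iii) $|\Aut_{\Lambda}(M)|=|\Aut_{\Lambda'}(M)|$, because the equivalence gives a ring isomorphism $\End_{\Lambda}(M)\cong\End_{\Lambda'}(M)$; and (iv) for $\mu,\nu,\lambda$ all lying in $\mathcal{C}(\alpha,\beta)$, the Hall number $g^{\lambda}_{\mu\nu}$ is the same whether computed in $\mathscr{H}(\Lambda)$ or in $\mathscr{H}(\Lambda')$, since "a submodule of $V_{\lambda}$ with sub and quotient of prescribed isomorphism type" is a categorical datum preserved by an exact equivalence, and $\mathcal{C}(\alpha,\beta)$ is closed under kernels of epimorphisms and cokernels of monomorphisms.

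For (1), I would simply unwind the two definitions $\langle u_{\gamma}\rangle=v^{-\dim_{k}V_{\gamma}+\varepsilon(\gamma)}u_{\gamma}$ with $\varepsilon(\gamma)=\langle\gamma,\gamma\rangle$, and $\langle u_{\gamma}\rangle'=(v')^{-\dim_{k'}V_{\gamma}+\varepsilon'(\gamma)}u_{\gamma}$ with $\varepsilon'(\gamma)=\langle\gamma,\gamma\rangle'$: by (i) and (ii) the exponent over $\Lambda$ equals $a$ times the exponent over $\Lambda'$, and $v^{a}=v'$, so the two scalar prefactors agree, while $u_{\gamma}$ is the same basis element (the isomorphism class of $V_{\gamma}$) of $\mathscr{H}(\Lambda')\subseteq\mathscr{H}(\Lambda)$.

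For (2), I would feed $\langle u_{\lambda}\rangle$ with $\lambda\in\mathcal{C}(\alpha,\beta)$ into $\delta_{\alpha}=\big((v^{2})^{-\dim_{k}V_{\alpha}+\varepsilon(\alpha)}/a_{\alpha}\big)r_{\alpha}$; the factor $a_{\alpha}$ cancels the $a_{\alpha}$ in the formula for $r_{\alpha}$, leaving $(v^{2})^{-\dim_{k}V_{\alpha}+\varepsilon(\alpha)}\sum_{\beta}v^{\langle\beta,\alpha\rangle+(\alpha,\beta)}g^{\lambda}_{\beta\alpha}(a_{\beta}/a_{\lambda})\langle u_{\beta}\rangle$. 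A nonzero term forces $V_{\beta}$ to be the cokernel of a monomorphism $V_{\alpha}\hookrightarrow V_{\lambda}$, hence $V_{\beta}\in\mathcal{C}(\alpha,\beta)$ by the closure property, so the whole expression already lies in $\mathscr{H}(\Lambda')$; rewriting each term by the dictionary and using part (1) turns it into $\delta'_{\alpha}(\langle u_{\lambda}\rangle')$. The argument for $_{\gamma}\delta$ is identical, using $r'_{\alpha}$, the Hall numbers $g^{\lambda}_{\alpha\beta}$, and closure under kernels of epimorphisms.

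I expect the only genuinely non-formal ingredient to be item (iv) of the dictionary — the invariance of the Hall numbers and of the orders $a_{\beta}$ under the exact equivalence — together with the bookkeeping observation that every $\beta$ occurring in $\delta_{\alpha}(\langle u_{\lambda}\rangle)$ or $_{\alpha}\delta(\langle u_{\lambda}\rangle)$ stays inside $\mathcal{C}(\alpha,\beta)$; everything else is the substitution $v\mapsto v^{a}$.
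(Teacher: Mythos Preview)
Your proposal is correct and follows essentially the same route as the paper: both reduce (1) and (2) to the observation that the embedding $\mathcal{C}(\alpha,\beta)\hookrightarrow\textrm{mod-}\Lambda$ preserves $\Hom$, $\Ext$, automorphism groups and Hall numbers, together with the closure of $\mathcal{C}(\alpha,\beta)$ under kernels of epimorphisms and cokernels of monomorphisms to keep the summation inside $\mathscr{H}(\Lambda')$. The only cosmetic difference is that the paper packages the bookkeeping by rewriting the exponents as cardinalities (e.g.\ $v^{-\dim_{k}V_{\gamma}+\varepsilon(\gamma)}=\sqrt{|\End V_{\gamma}|}/\sqrt{|V_{\gamma}||\Ext(V_{\gamma},V_{\gamma})|}$), which are visibly invariant, whereas you track the scaling factor $a$ explicitly; one small caution is that your reuse of the letters $\alpha,\beta$ both for the exceptional pair and as summation indices in (2) should be cleaned up.
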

\begin{proof}
(1) By definition
\begin{equation*}
\langle
u_{\gamma}\rangle=v^{-\dim_{k}V_{\gamma}+\varepsilon(\gamma)}u_{\gamma}.
\end{equation*}
The number
\begin{equation*}
v^{-\dim_{k}V_{\gamma}+\varepsilon(\gamma)}=\dfrac{\sqrt{|\End_{\Lambda}(V_{\gamma})|}}{\sqrt{|V_{\gamma}||\Ext_{\Lambda}(V_{\gamma},V_{\gamma})|}}
\end{equation*}
is unchanged whether we consider $V_{\gamma}$ as a $\Lambda$-module
or a $\Lambda'$-module since the embedding
$\mathcal{C}(\alpha,\beta)\hookrightarrow$ mod-$\Lambda$ induces
isomorphisms on both $\Hom$ and $\Ext$.

(2)\ We only prove the assertion for $\delta_{\gamma}$. Recall the
definition of $\delta_{\gamma}$ in 3.3.
\begin{equation*}
\delta_{\gamma}(u_{\lambda})=\sum_{\rho\in\mathcal{P}}v^{\langle
\gamma,\rho\rangle}g_{\gamma\rho}^{\lambda}\frac{a_{\rho}}{a_{\lambda}}u_{\rho}.
\end{equation*}
Since $\mathcal{C}(\alpha,\beta)$ is closed under kernels of
epimorphisms, we can see that if
$V_{\lambda}\in\mathcal{C}(\alpha,\beta)$, those $V_{\rho}$ with
$u_{\rho}$ occurring in the right hand side must be in
$\mathcal{C}(\alpha,\beta)$.

Note that
\begin{equation*}
v^{\langle
\gamma,\rho\rangle}=\dfrac{\sqrt{|\Hom_{\Lambda}(V_{\gamma},V_{\rho})|}}{\sqrt{|\Ext_{\Lambda}(V_{\gamma},V_{\rho})|}}.
\end{equation*}
Hence the numbers $v^{\langle \gamma,\rho\rangle}$,
$g_{\gamma\rho}^{\lambda}$, $a_{\rho}$ and $a_{\lambda}$ are
unchanged whether we consider $V_{\gamma},V_{\rho},V_{\lambda}$ as
$\Lambda$-modules or $\Lambda'$-modules.
\end{proof}

The following lemma is the key point of the proof.

\begin{lem}\label{lem 7 exc seq and symmetries}
For any indecomposable exceptional module $V_{\lambda}$, there exist
a positive integer $n$, subcategories $\mathscr{C}_{i}$ ($1\leq
i\leq n$) of mod-$\Lambda$ and indecomposable exceptional
$\Lambda$-modules $V_{\alpha_{i}}$ ($1\leq i \leq n+1$) where for
each $i$, $\mathscr{C}_{i}\simeq$ mod-$\Lambda_{i}$ (here
$\Lambda_{i}$ is a finite dimensional hereditary algebra having
exactly two simple modules $S_{i,1}$ and $S_{i,2}$) such that
$$\langle u_{\lambda}\rangle=\langle
u_{\alpha_{n+1}}\rangle;$$ and for each $1\leq i\leq n$,
$$\langle u_{\alpha_{i+1}}\rangle=T_{S_{i,i_{1}}}T_{S_{i,i_{2}}}\cdots T_{S_{i,i_{d_{i}}}}(\langle u_{\alpha_{i}}\rangle), \ or \ \langle u_{\alpha_{i+1}}\rangle=T_{S_{i,i_{1}}}'T_{S_{i,i_{2}}}'\cdots T_{S_{i,i_{d_{i}}}}'(\langle u_{\alpha_{i}}\rangle)$$
$$V_{\alpha_{i}}=S_{i,1}\ or \ S_{i,2}\ \ i.e. \ \langle u_{\alpha_{i}}\rangle=\langle u_{S_{i,1}}\rangle \ or \ \langle u_{S_{i,2}}\rangle$$
where $i_{1},i_{2},\cdots,i_{d_{i}}\in \{1,2\}$  for each $1\leq
i\leq
 n$.
\end{lem}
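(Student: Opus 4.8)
The plan is to argue by induction on $\dim_{k}V_{\lambda}$, reducing at each stage to a rank-two subcategory and invoking the known description of exceptional modules over generalized Kronecker algebras. If $V_{\lambda}$ is a simple $\Lambda$-module $S_{j}$, I would pick an edge of the underlying species at the vertex $j$, let $\mathscr{C}_{1}$ be the subcategory of mod-$\Lambda$ generated by $S_{j}$ and the simple $S_{j'}$ at the other end of that edge under extensions, kernels of epimorphisms and cokernels of monomorphisms; then $\mathscr{C}_{1}\simeq$ mod-$\Lambda_{1}$ for a generalized Kronecker algebra $\Lambda_{1}$ whose two simple modules are exactly $S_{j},S_{j'}$, and the chain of length $n=1$ with $V_{\alpha_{1}}=V_{\alpha_{2}}=S_{j}$ and an empty string of symmetries settles this case. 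So assume $V_{\lambda}$ is indecomposable exceptional and not simple.

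For the inductive step: by the theory of exceptional sequences recalled in Section \ref{section 5} (Crawley-Boevey \cite{cb}, Ringel \cite{r2}), a non-simple indecomposable exceptional module $V_{\lambda}$ occurs as $R(\alpha,\beta)$ or $L(\alpha,\beta)$ for some exceptional pair $(V_{\alpha},V_{\beta})$, and --- this is the point flagged below as the crux --- the pair can be taken so that $V_{\lambda}$ is a \emph{non-simple} object of the abelian subcategory $\mathcal{C}(\alpha,\beta)$; equivalently, $V_{\lambda}$ sits in a non-split short exact sequence $0\to V_{\beta}\to V_{\lambda}\to V_{\alpha}\to 0$ (possibly after exchanging $\alpha$ and $\beta$) with $V_{\alpha},V_{\beta}\neq 0$. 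By the results of Section \ref{section 5} again, $\mathcal{C}(\alpha,\beta)$ is equivalent to mod-$\Lambda'$ for a rank-two hereditary, i.e. generalized Kronecker, algebra $\Lambda'$, with simple modules $S_{1},S_{2}$, and $\Lambda'$ has no regular exceptional modules; since the embedding $\mathcal{C}(\alpha,\beta)\hookrightarrow$ mod-$\Lambda$ preserves $\Hom$ and $\Ext$, the module $V_{\lambda}$ is exceptional inside mod-$\Lambda'$, hence preprojective or preinjective there. Applying Proposition \ref{prop 4 obtain irregular} to $\Lambda'$ then yields $j\in\{1,2\}$ and a source or sink sequence $i_{1},\dots,i_{d}\in\{1,2\}$ with $\langle u_{V_{\lambda}}\rangle'=T_{S_{i_{1}}}\cdots T_{S_{i_{d}}}(\langle u_{S_{j}}\rangle')$, or the primed analogue, where the prime refers to the Hall algebra $\mathscr{H}(\Lambda')$ and its relative reflection functors; since $V_{\lambda}$ is not a simple $\Lambda'$-module, $d\geq 1$. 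By Lemma \ref{lem 7 relative derivation}(1) together with the corresponding compatibility for the reflection functors noted below, $\langle u_{X}\rangle'=\langle u_{X}\rangle$ for $X\in\mathcal{C}(\alpha,\beta)$ and the relative $T_{S_{i}}$ are the restrictions of the ambient operators, so $\langle u_{V_{\lambda}}\rangle=T_{S_{i_{1}}}\cdots T_{S_{i_{d}}}(\langle u_{S_{j}}\rangle)$ holds in $\mathscr{H}(\Lambda)$. Finally $S_{j}$ is a simple object of $\mathcal{C}(\alpha,\beta)\simeq$ mod-$\Lambda'$ whereas $V_{\lambda}$ is not, which forces $\dim_{k}S_{j}<\dim_{k}V_{\lambda}$; the induction hypothesis applied to the indecomposable exceptional module $S_{j}$ gives a chain terminating at $S_{j}$, and appending the step just constructed --- taking the new category $\mathscr{C}_{n}=\mathcal{C}(\alpha,\beta)$, the new module $V_{\alpha_{n+1}}=V_{\lambda}$, and noting that $S_{j}$ is a simple object of $\mathcal{C}(\alpha,\beta)$ --- produces the chain for $V_{\lambda}$.

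The step I expect to be the main obstacle is the italicized claim above: that every non-simple indecomposable exceptional module occurs as $R(\alpha,\beta)$ or $L(\alpha,\beta)$ for an exceptional pair with respect to which it is a \emph{non-simple} object of $\mathcal{C}(\alpha,\beta)$, equivalently that it admits a non-split extension $0\to V_{\beta}\to V_{\lambda}\to V_{\alpha}\to 0$ with $V_{\alpha},V_{\beta}$ indecomposable exceptional forming an exceptional pair. This is what makes $\mathcal{C}(\alpha,\beta)$ genuinely rank two and what forces the induction parameter $\dim_{k}$ to drop strictly, so it is exactly the place where the braid-group/mutation description of exceptional modules and the preprojective--preinjective dichotomy for generalized Kronecker algebras (Section \ref{section 5}, \cite{cb}, \cite{r2}) enter in an essential way; I would extract it from the mutation relations underlying Theorem \ref{thm 5 exc sequence}. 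A secondary, routine point to record is the analogue of Lemma \ref{lem 7 relative derivation} for the reflection functors, namely that the relative symmetries $T_{S_{1}},T_{S_{2}},T'_{S_{1}},T'_{S_{2}}$ of $\Lambda'$ agree with the restrictions of the corresponding ambient operators; this follows from Proposition \ref{prop 4 ref functor Hall alg} and the observation, already used in the proof of Lemma \ref{lem 7 relative derivation}, that $\Hom$, $\Ext$, the Hall numbers $g^{\lambda}_{\alpha\beta}$ and the orders $a_{\alpha}$ are the same whether computed in $\mathscr{H}(\Lambda')$ or in $\mathscr{H}(\Lambda)$.
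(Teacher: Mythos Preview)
Your argument is correct and follows essentially the same route as the paper: induction on dimension, reduction to a rank-two subcategory in which $V_{\lambda}$ is non-simple (hence preprojective or preinjective), Proposition~\ref{prop 4 obtain irregular} to express $\langle u_{\lambda}\rangle$ via relative symmetries applied to a simple of $\Lambda'$, and Lemma~\ref{lem 7 relative derivation}(1) to transfer back to $\mathscr{H}(\Lambda)$. The crux you flag --- the existence of an exceptional pair with $V_{\lambda}\in\mathcal{C}(\alpha,\beta)$ non-simple --- is exactly what the paper isolates, but rather than extracting it from the mutation formulas of Theorem~\ref{thm 5 exc sequence} the paper simply cites Ringel \cite{r5}, Section~8, for the statement that any non-simple indecomposable exceptional $V_{\lambda}$ can be completed to an exceptional pair $(V_{\lambda},V_{\mu})$ or $(V_{\mu},V_{\lambda})$ with $V_{\lambda}$ non-simple in the resulting rank-two category.

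One small correction to your secondary point: there are in general no ``ambient'' operators $T_{S_{1}},T_{S_{2}}$ on $\mathscr{H}(\Lambda)$, since $S_{1},S_{2}$ are the simples of $\Lambda'$ and need not be simple in $\Lambda$; the symmetries live only on $\mathcal{D}(\Lambda')$. What is actually needed, and what the paper uses, is just Lemma~\ref{lem 7 relative derivation}(1): the identification $\langle\,\cdot\,\rangle'=\langle\,\cdot\,\rangle$ lets one read the output of the relative $T_{S_{i}}$ as an element of $\mathscr{H}(\Lambda)$.
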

\begin{proof}
If $V_{\lambda}$ is simple, there is nothing to prove. So
assume $V_{\lambda}$ is not simple. Then by Ringel (See \cite{r5}
Section 8) there exists an exceptional sequence
$(V_{\lambda},V_{\mu})$ or $(V_{\mu},V_{\lambda})$ such that
$V_{\lambda}$ is not simple in $\mathcal{C}(V_{\lambda},V_{\mu})$.

We know that $\mathcal{C}(V_{\lambda},V_{\mu})$ is isomorphic to a
finite dimensional hereditary algebra $\Lambda'$ with just two
simple modules and $V_{\lambda}$ viewed as a $\Lambda'$-module is
preprojective or preinjective. Denote the simple $\Lambda'$-modules
by $V_{S_{1}}$ and $V_{S_{2}}$. By Proposition \ref{prop 4 obtain
irregular} we have
$$\langle u_{\lambda}\rangle=T_{S_{i_{1}}}T_{S_{i_{2}}}\cdots T_{S_{i_{m-1}}}(\langle u_{S_{i_{m}}}\rangle)$$
where $i_{1},i_{2},\cdots,i_{m} (i_{j}\in\{1,2\}, j=1,\cdots,m)$ is
a source sequence of the graph of $\Lambda'$, or
$$\langle u_{\lambda}\rangle=T_{S_{i_{1}}}'T_{S_{i_{2}}}'\cdots T_{S_{i_{m-1}}}'(\langle u_{S_{i_{m}}}\rangle)$$
where $i_{1},i_{2},\cdots,i_{m} (i_{j}\in\{1,2\}, j=1,\cdots,m)$ is
a sink sequence of the graph of $\Lambda'$.

Note that in the formulas above we should use $\langle \ \rangle'$,
but Lemma \ref{lem 7 relative derivation}(1) told us $\langle \
\rangle'=\langle \ \rangle$.

The lemma follows by induction.
\end{proof}

Now we can prove Theorem \ref{thm 6 exc in integral comp alg}.

First let us see that theorem \ref{thm 6 exc in integral comp alg}
holds for any indecomposable exceptional module $V_{\lambda}$. By
the above lemma, in each step of the induction we are in some
sub-quantum group, say $U'$. The symmetries are
$U_{\mathbb{Z}}'$-stable and the coefficient ring is
$\mathbb{Z}[v',v'^{-1}]$ where $v'=v^{a}$ for some positive integer
$a$. Obviously $\mathbb{Z}[v',v'^{-1}]\subset \mathbb{Z}[v,v^{-1}]$.
Hence $\langle u_{\lambda}\rangle$ lies in the integral composition
algebra. The only thing we may worry about is whether the
calculation is generic. But Theorem \ref{thm 5 exc sequence} has
ensured it. Thus $\langle u_{\lambda}\rangle$ is in
$U_{\mathbb{Z}}^{+}$.

Next we consider the case $V_{\lambda}\simeq sV_{\rho}$ where
$V_{\rho}$ is indecomposable exceptional (we also write $sV_{\rho}$
as $V_{s\rho}$). It is well known that (see \cite{r5}, for example)
we have
$$\langle u_{s\rho}\rangle=\langle u_{\rho}\rangle^{(s)},$$
Note that Lusztig's symmetries are endomorphisms hence the right
hand side is also in $U_{\mathbb{Z}}^{+}$ by Lemma \ref{lem 7 exc
seq and symmetries}.

Now we consider the general case. By induction we can reduce to the
case that $V_{\lambda}\simeq V_{s\mu}\oplus V_{t\nu}$ where
$V_{\mu}$ and $V_{\nu}$ are non-isomorphic indecomposable
exceptional modules.

We need two lemmas. One gives the calculation of the filtration
number $g_{\alpha\beta}^{\gamma}$, that is due to Riedtmann
\cite{rie} and Peng \cite{p}.

\begin{lem}\label{lem 7 Peng formula}
For any $\Lambda$-modules $V_{\alpha}$, $V_{\beta}$ and $V_{\gamma}$
$$g_{\alpha\beta}^{\gamma}=\frac{a_{\gamma}|\Ext_{\Lambda}(V_{\alpha},V_{\beta})_{V_{\gamma}}|}{a_{\alpha}a_{\beta}|\Hom_{\Lambda}(V_{\alpha},V_{\beta})|}$$
where $\Ext_{\Lambda}(V_{\alpha},V_{\beta})_{V_{\gamma}}$ is the set
of exact sequence in $\Ext_{\Lambda}(V_{\alpha},V_{\beta})$ with
middle term $V_{\gamma}$.
\end{lem}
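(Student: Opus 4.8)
The plan is to count the set $\mathcal{E}$ of all short exact sequences $0\to V_{\beta}\xrightarrow{f}V_{\gamma}\xrightarrow{g}V_{\alpha}\to 0$, with the three modules fixed on the nose (not merely up to isomorphism), in two different ways. This is the classical orbit-counting argument of Riedtmann and Peng, so the work amounts to setting up two group actions on $\mathcal{E}$ and comparing the resulting cardinality formulas.

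First I would relate $|\mathcal{E}|$ to $g_{\alpha\beta}^{\gamma}$. The group $\Aut_{\Lambda}(V_{\alpha})\times\Aut_{\Lambda}(V_{\beta})$ acts on $\mathcal{E}$ by $(a,b)\cdot(f,g)=(fb^{-1},\,ag)$, and since $f$ is monic and $g$ is epic this action is free. Sending $(f,g)$ to the submodule $\mathrm{im}\,f\subseteq V_{\gamma}$ is constant on orbits and induces a bijection between the orbit set and the set of submodules $B\subseteq V_{\gamma}$ with $B\cong V_{\beta}$ and $V_{\gamma}/B\cong V_{\alpha}$; the latter has $g_{\alpha\beta}^{\gamma}$ elements by definition. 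Hence $|\mathcal{E}|=a_{\alpha}a_{\beta}\,g_{\alpha\beta}^{\gamma}$.

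Next I would compute $|\mathcal{E}|$ via $\Ext^{1}$. The group $\Aut_{\Lambda}(V_{\gamma})$ acts on $\mathcal{E}$ by $h\cdot(f,g)=(hf,\,gh^{-1})$, and two members of $\mathcal{E}$ are equivalent as extensions exactly when they lie in one orbit, because the comparison map of an equivalence of extensions is automatically an automorphism of $V_{\gamma}$; thus the orbit set is precisely $\Ext_{\Lambda}(V_{\alpha},V_{\beta})_{V_{\gamma}}$. The one substantive point is the stabilizer of a point $(f,g)$: if $h$ fixes it then $(h-\mathrm{id})f=0$ and $g(h-\mathrm{id})=0$, so $h-\mathrm{id}$ vanishes on $\mathrm{im}\,f$ and has image in $\ker g=\mathrm{im}\,f$, whence $h-\mathrm{id}=f\varphi g$ for a unique $\varphi\in\Hom_{\Lambda}(V_{\alpha},V_{\beta})$; conversely, for any such $\varphi$ the endomorphism $\mathrm{id}+f\varphi g$ is unipotent since $(f\varphi g)^{2}=f\varphi(gf)\varphi g=0$, hence an automorphism fixing $(f,g)$. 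So every stabilizer has order $|\Hom_{\Lambda}(V_{\alpha},V_{\beta})|$, every orbit has size $a_{\gamma}/|\Hom_{\Lambda}(V_{\alpha},V_{\beta})|$, and therefore $|\mathcal{E}|=a_{\gamma}\,|\Ext_{\Lambda}(V_{\alpha},V_{\beta})_{V_{\gamma}}|\,/\,|\Hom_{\Lambda}(V_{\alpha},V_{\beta})|$. Comparing with the previous paragraph gives the asserted identity.

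I expect the only delicate step to be the stabilizer computation, where one must check both that $h-\mathrm{id}$ genuinely factors as $f\varphi g$ through the cokernel of $f$ and into the image of $f$, and conversely that $\mathrm{id}+f\varphi g$ is invertible (immediate from $gf=0$); the remaining bookkeeping — freeness of the first action, the orbit/submodule bijection, and the identification of the second orbit set with $\Ext_{\Lambda}(V_{\alpha},V_{\beta})_{V_{\gamma}}$ — is routine. One should also keep in mind the standard normalization that $\Ext^{1}$ classifies extensions up to equivalence, so that no automorphisms of the end terms enter the $\Aut_{\Lambda}(V_{\gamma})$-orbit count.
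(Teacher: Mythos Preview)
Your argument is correct and is precisely the classical Riedtmann--Peng orbit-counting proof. The paper itself does not supply a proof of this lemma; it simply attributes the formula to Riedtmann \cite{rie} and Peng \cite{p} and states it for later use, so there is no in-paper argument to compare against.
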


The other one tells us how to compute the automorphism groups of
decomposable modules (See \cite{cx} or \cite{z}).

\begin{lem}\label{lem 7 calcul End of decomposables}
\emph{(1)}\ Let $V_{\lambda}$ be an indecomposable $\Lambda$-module
with $\dim_{k}\End_{\Lambda}(V_{\lambda})=s$ and $\dim_{k}\rad
\End_{\Lambda}(V_{\lambda})=t$, then
$$a_{\lambda}=(v^{2(s-t)}-1)v^{2t}$$

\emph{(2)}\ Let $V_{\lambda}\cong
s_{1}V_{\lambda_{1}}\oplus\cdots\oplus s_{t}V_{\lambda_{t}}$ such
that $V_{\lambda_{i}}\ncong V_{\lambda_{j}}$ for any $i\neq j$, then
$$a_{\lambda}=v^{2s}a_{s_{1}\lambda_{1}}\cdots a_{s_{t}\lambda_{t}},$$
where $s=\sum_{i\neq
j}s_{i}s_{j}\dim_{k}\Hom_{\Lambda}(V_{\lambda_{i}},V_{\lambda_{j}})$.

\emph{(3)}\ Let $V_{\lambda}=sV_{\rho}$ with
$\End_{\Lambda}(V_{\rho})=F$ and $F$ is an extension field of $k$,
then
$$a_{\lambda}=\prod_{0\leq t\leq s-1}(d^{s}-d^{t}),$$ where
$d=|F|=v^{2[F:k]}$.
\end{lem}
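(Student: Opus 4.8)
The plan is to compute each of these automorphism groups by counting units in the finite-dimensional $k$-algebra $\End_{\Lambda}(V_{\lambda})$, using that $|k|=v^{2}$ (so a $d$-dimensional $k$-vector space has $v^{2d}$ elements) together with the standard fact that for a finite-dimensional $k$-algebra $R$ the Jacobson radical $\rad R$ is nilpotent, whence $R^{\times}$ is exactly the preimage of $(R/\rad R)^{\times}$ under $R\twoheadrightarrow R/\rad R$ and $|R^{\times}|=|\rad R|\cdot|(R/\rad R)^{\times}|$. For part (1), $V_{\lambda}$ being indecomposable of finite length forces $R=\End_{\Lambda}(V_{\lambda})$ to be local with maximal ideal $\rad R$, and in a local ring the units are precisely the elements outside the maximal ideal; hence $a_{\lambda}=|R|-|\rad R|=v^{2s}-v^{2t}=(v^{2(s-t)}-1)v^{2t}$. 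For part (3), $\End_{\Lambda}(sV_{\rho})=M_{s}(\End_{\Lambda}V_{\rho})=M_{s}(F)$, so $\Aut_{\Lambda}(V_{\lambda})=GL_{s}(F)$, and the classical count over a finite field gives $a_{\lambda}=\prod_{0\le t\le s-1}(|F|^{s}-|F|^{t})=\prod_{0\le t\le s-1}(d^{s}-d^{t})$ with $d=|F|=v^{2[F:k]}$.

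For part (2) I would induct on the number $t$ of pairwise non-isomorphic summands; the case $t=1$ is vacuous, since then the asserted identity reads $a_{\lambda}=a_{s_{1}\lambda_{1}}$. Write $V_{\lambda}=X\oplus Y$ with $Y=s_{t}V_{\lambda_{t}}$ and $X=\bigoplus_{i<t}s_{i}V_{\lambda_{i}}$, so that $X$ and $Y$ share no indecomposable direct summand. The key point is that an endomorphism $\left(\begin{smallmatrix}a&b\\c&d\end{smallmatrix}\right)$ of $X\oplus Y$, with $a\in\End(X)$, $d\in\End(Y)$, $b\in\Hom(Y,X)$, $c\in\Hom(X,Y)$, is an automorphism if and only if $a\in\Aut(X)$ and $d\in\Aut(Y)$: because $X$ and $Y$ have no common indecomposable summand, every morphism in $\Hom(X,Y)$ and in $\Hom(Y,X)$ is a radical morphism, so the off-diagonal blocks lie in $\rad\End(X\oplus Y)$ and $\End(X\oplus Y)/\rad\cong\End(X)/\rad\times\End(Y)/\rad$, whence the criterion follows from the description of units recalled above. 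Consequently $a_{\lambda}=|\Aut(X)|\cdot|\Aut(Y)|\cdot|\Hom(Y,X)|\cdot|\Hom(X,Y)|$; since $|\Hom(Y,X)|\cdot|\Hom(X,Y)|=v^{2\sum_{i<t}s_{i}s_{t}(\dim_{k}\Hom(V_{\lambda_{i}},V_{\lambda_{t}})+\dim_{k}\Hom(V_{\lambda_{t}},V_{\lambda_{i}}))}$ and $|\Aut(Y)|=a_{s_{t}\lambda_{t}}$, applying the inductive hypothesis to $X$ collects all the exponents of $v^{2}$ into $\sum_{i\neq j}s_{i}s_{j}\dim_{k}\Hom(V_{\lambda_{i}},V_{\lambda_{j}})$ and gives $a_{\lambda}=v^{2s}a_{s_{1}\lambda_{1}}\cdots a_{s_{t}\lambda_{t}}$.

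I expect the only real point of substance to be the claim used in part (2) that an endomorphism of $X\oplus Y$ is invertible exactly when both its diagonal blocks are, i.e.\ the identification of $\rad\End(X\oplus Y)$ with the off-diagonal Hom-spaces together with the summand-wise radicals. This rests on the standard facts that a non-isomorphism between indecomposable modules is a radical morphism and that the radical of a finite-dimensional algebra is nilpotent, so that $1+(\text{radical element})$ is a unit. Everything else --- parts (1) and (3), and the cardinality bookkeeping in (2) --- is routine once one remembers $|k|=v^{2}$ and the order formula for $GL_{s}$ over a finite field.
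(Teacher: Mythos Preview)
The paper does not actually prove this lemma; it merely states it with the parenthetical ``See \cite{cx} or \cite{z}'' and then uses it. Your argument is correct and is the standard way these counting formulas are established: part~(1) via locality of the endomorphism ring of an indecomposable, part~(3) via $\End(sV_{\rho})\cong M_{s}(F)$ and the order of $GL_{s}$ over a finite field, and part~(2) via the identification of $\rad\End(X\oplus Y)$ with the block matrix whose diagonal entries are the summand-wise radicals and whose off-diagonal entries are the full Hom-spaces (valid precisely because $X$ and $Y$ share no indecomposable summand). There is nothing to compare against in the paper itself, and your write-up would serve as a complete proof of the cited lemma.
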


Now we can see that
\begin{equation*}
\begin{split}
\langle u_{s\mu}\rangle\langle u_{t\nu}\rangle & =v^{-\langle
t\nu,s\mu\rangle}g_{(s\mu)(t\nu)}^{s\mu\oplus t\nu}\langle
u_{s\mu\oplus t\nu}\rangle\\
& =v^{-\langle t\nu,s\mu\rangle}\frac{a_{s\mu\oplus
t\nu}}{a_{s\mu}a_{t\nu}|\Hom_{\Lambda}(V_{s\mu},V_{t\nu})|}\langle
u_{s\mu\oplus t\nu}\rangle\\
& =v^{-\langle
t\nu,s\mu\rangle}|\Hom_{\Lambda}(V_{t\nu},V_{s\mu})|\langle
u_{s\mu\oplus t\nu}\rangle
\end{split}
\end{equation*}

Hence we have
$$\langle
u_{\lambda}\rangle=\langle u_{s\mu\oplus t\nu}\rangle=v^{\langle
t\nu,s\mu\rangle-2st\dim_{k}\Hom_{\Lambda}(V_{\nu},V_{\mu})}\langle
u_{s\mu}\rangle\langle u_{t\nu}\rangle,$$ which lies in
$U_{\mathbb{Z}}^{+}$ and we are done.

\section{Proof of Theorem 6.2}\label{section 8}

\textbf{8.1 The pairing $(-,-)_{R}$.} \ \ Define a pairing
$(-,-)_{R}$: $\mathscr{H}(\Lambda)\times \mathscr{H}(\Lambda)
\rightarrow \mathbb{Q}(v)$ by
\begin{displaymath}
(\langle u_{\beta}\rangle,\langle
u_{\beta'}\rangle)_{R}=v^{(\beta,\beta)}a_{\beta}^{-1}\delta_{\beta\beta'},
\end{displaymath}
for all $\beta,\beta'\in \mathcal{P}$. (This pairing was first
proposed by Ringel, see \cite{r4})

Now we have two non-degenerate symmetric bilinear forms on $U^{+}$,
namely $(-,-)_{R}$ and $(-,-)_{K}$ (recall Section \ref{section 2}).
We will deduce some properties of $(-,-)_{R}$ and compare it with
$(-,-)_{K}$. Note that in Lemma \ref{lem 3 derivation coincide}, we
have proved that the derivations $r_{i}'$ and $f_{i}'$ coincide.

Denote $v^{\varepsilon(i)}$ by $v_{i}$. We have the following lemma.

\begin{lem}\label{lem 8 calcul Ringel form}
For any $i\in I$ and $\lambda_{1},\lambda_{2}\in \mathcal{P}$, we
have
\begin{displaymath}
(\langle u_{\lambda_{1}}\rangle,\langle u_{i}\rangle\langle
u_{\lambda_{2}}\rangle)_{R}=(1-v_{i}^{-2})^{-1}(r_{i}'(\langle
u_{\lambda_{1}}\rangle),\langle u_{\lambda_{2}}\rangle)_{R},
\end{displaymath}
\begin{displaymath}
(\langle u_{\lambda_{1}}\rangle,\langle
u_{\lambda_{2}}\rangle\langle
u_{i}\rangle)_{R}=(1-v_{i}^{-2})^{-1}(\langle
u_{\lambda_{1}}\rangle,r_{i}(\langle u_{\lambda_{2}}\rangle))_{R}.
\end{displaymath}
\end{lem}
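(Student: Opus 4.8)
The plan is to reduce both identities to a direct computation in the Hall algebra, using the explicit formula for the structure constants $g^{\lambda}_{\alpha\beta}$ together with the Riedtmann–Peng formula (Lemma~\ref{lem 7 Peng formula}). I will prove the first identity; the second is entirely symmetric, with the roles of left and right multiplication interchanged and $r_i'$ replaced by $r_i$. Since $(-,-)_R$ is diagonal in the basis $\{\langle u_\lambda\rangle\}$, it suffices to test against a single basis element: I will compute both sides when $\lambda_1$ is fixed and expand $\langle u_i\rangle\langle u_{\lambda_2}\rangle$ in the basis $\{\langle u_\mu\rangle\}$. Concretely, write $\langle u_i\rangle\langle u_{\lambda_2}\rangle = \sum_{\mu} v^{-\langle \lambda_2, i\rangle} g^{\mu}_{i\lambda_2}\langle u_\mu\rangle$, so the left-hand side becomes $v^{-\langle\lambda_2,i\rangle} g^{\lambda_1}_{i\lambda_2}\, v^{(\lambda_1,\lambda_1)} a_{\lambda_1}^{-1}$, which is nonzero only when $\dimv V_{\lambda_1} = i + \dimv V_{\lambda_2}$.

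Next I would expand the right-hand side. By definition $r_i'(\langle u_{\lambda_1}\rangle) = \sum_{\beta} v^{\langle i,\beta\rangle + (i,\beta)} g^{\lambda_1}_{i\beta}\frac{a_\beta a_i}{a_{\lambda_1}}\langle u_\beta\rangle$, so pairing with $\langle u_{\lambda_2}\rangle$ picks out the $\beta = \lambda_2$ term: the right-hand side equals $(1-v_i^{-2})^{-1} v^{\langle i,\lambda_2\rangle + (i,\lambda_2)} g^{\lambda_1}_{i\lambda_2}\frac{a_{\lambda_2}a_i}{a_{\lambda_1}}\, v^{(\lambda_2,\lambda_2)} a_{\lambda_2}^{-1}$. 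Cancelling $a_{\lambda_2}$ and comparing with the left-hand side, the lemma reduces to the scalar identity
\begin{displaymath}
v^{-\langle\lambda_2,i\rangle} v^{(\lambda_1,\lambda_1)} = (1-v_i^{-2})^{-1} v^{\langle i,\lambda_2\rangle + (i,\lambda_2)} v^{(\lambda_2,\lambda_2)} \frac{a_i}{a_{\lambda_1}},
\end{displaymath}
valid whenever $g^{\lambda_1}_{i\lambda_2}\neq 0$, i.e. when $\dimv V_{\lambda_1} = i + \dimv V_{\lambda_2}$. Using $(\lambda_1,\lambda_1) = (\lambda_2,\lambda_2) + 2(i,\lambda_2) + (i,i)$ in that case, together with $(i,i) = 2\varepsilon(i)$, $a_i = v_i^2 - 1 = v^{(i,i)}-1$, and $(i,\lambda_2) = \langle i,\lambda_2\rangle + \langle \lambda_2,i\rangle$, the exponents of $v$ on both sides should match after one notices $a_{\lambda_1} = a_i \cdot (\text{a power of }v)$-type relations; more precisely I would invoke Lemma~\ref{lem 7 Peng formula} to rewrite $g^{\lambda_1}_{i\lambda_2} a_{\lambda_1}^{-1}$ in terms of $|\Ext_\Lambda(V_i,V_{\lambda_2})_{V_{\lambda_1}}|$ and Hom-spaces, at which point the factor $(1-v_i^{-2})^{-1}$ emerges from summing a geometric-type contribution of the $a_i$ factor.

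The main obstacle I anticipate is bookkeeping the appearance of $(1-v_i^{-2})^{-1}$: it does not come from a single structure constant but from the ratio $a_i / a_{\lambda_1}$ combined with the Peng formula, and getting the normalization exactly right (rather than off by a power of $v$ or a sign) requires care with the conventions $\langle u_\alpha\rangle = v^{-\dim_k V_\alpha + \varepsilon(\alpha)}u_\alpha$ and $a_i = v_i^2 - 1$. A clean way to organize this is to first establish the identity on generators, i.e. verify it directly when $\lambda_1 = i$ and $\lambda_2 = 0$ (where $r_i'(\langle u_i\rangle) = 1$ and the pairing reduces to $(\langle u_i\rangle,\langle u_i\rangle)_R = v^{(i,i)}a_i^{-1} = v^{(i,i)}(v^{(i,i)}-1)^{-1} = (1-v_i^{-2})^{-1}$), which pins down the constant, and then run the general computation knowing what the answer must be. Once the scalar identity is confirmed in the one nontrivial degree, the lemma follows for all $\lambda_1,\lambda_2$ since both sides vanish in all other degrees by the $Q_+$-grading.
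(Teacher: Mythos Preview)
Your approach is exactly the paper's: expand both sides in the basis $\{\langle u_\mu\rangle\}$, pick out the unique surviving term, and compare scalars. The setup and the two expansions you wrote are correct.

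The slip is in the displayed scalar identity. On the left you computed
\[
v^{-\langle\lambda_2,i\rangle}\,g^{\lambda_1}_{i\lambda_2}\,v^{(\lambda_1,\lambda_1)}\,a_{\lambda_1}^{-1},
\]
and on the right, after cancelling $a_{\lambda_2}$,
\[
(1-v_i^{-2})^{-1}\,v^{\langle i,\lambda_2\rangle+(i,\lambda_2)+(\lambda_2,\lambda_2)}\,g^{\lambda_1}_{i\lambda_2}\,\frac{a_i}{a_{\lambda_1}}.
\]
Both sides carry the common factor $g^{\lambda_1}_{i\lambda_2}\,a_{\lambda_1}^{-1}$; when you strip it off you dropped $a_{\lambda_1}^{-1}$ from the left but kept it on the right. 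The correct reduced identity is
\[
v^{(\lambda_1,\lambda_1)-\langle\lambda_2,i\rangle}
=(1-v_i^{-2})^{-1}\,a_i\,v^{\langle i,\lambda_2\rangle+(i,\lambda_2)+(\lambda_2,\lambda_2)},
\]
with no $a_{\lambda_1}$ anywhere. Now $a_i=v_i^{2}-1=v^{(i,i)}(1-v_i^{-2})$, so the right side is simply $v^{(i,i)+\langle i,\lambda_2\rangle+(i,\lambda_2)+(\lambda_2,\lambda_2)}$, and the exponent check is immediate from $(\lambda_1,\lambda_1)=(\lambda_2,\lambda_2)+2(i,\lambda_2)+(i,i)$ together with $(i,\lambda_2)=\langle i,\lambda_2\rangle+\langle\lambda_2,i\rangle$.

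So the Riedtmann--Peng formula (Lemma~\ref{lem 7 Peng formula}) is not needed here, and the ``geometric-type contribution'' you were bracing for is a phantom produced by the stray $a_{\lambda_1}$. Once that is corrected the computation is a two-line exponent match, exactly as in the paper.
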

\begin{proof}
We only prove the first one. The proof of the second one is
similar. By definition, we know that
$$\langle u_{i}\rangle\langle u_{\lambda_{2}}\rangle=v^{-\langle\lambda_{2},i\rangle}\sum_{\lambda\in\mathcal{P}}g_{i\lambda_{2}}^{\lambda}\langle u_{\lambda}\rangle.$$

So we have
$$(\langle u_{\lambda_{1}}\rangle,\langle u_{i}\rangle\langle
u_{\lambda_{2}}\rangle)_{R}=v^{(\lambda_{1},\lambda_{1})-\langle\lambda_{2},i\rangle}\frac{g_{i\lambda_{2}}^{\lambda_{1}}}{a_{\lambda_{1}}}.$$

On the other hand, we have
$$r'_{i}(\langle u_{\lambda_{1}} \rangle)= \sum_{\beta\in \mathcal{P}}v^{\langle i,\beta
 \rangle+(i,\beta)}g^{\lambda_{1}}_{i\beta}\frac{a_{\beta}a_{i}}{a_{\lambda_{1}}}\langle u_{\beta}\rangle,$$

Thus we have
$$(r_{i}'(\langle
u_{\lambda_{1}}\rangle),\langle
u_{\lambda_{2}}\rangle)_{R}=v^{\langle i,\lambda_{2}
 \rangle+(i,\lambda_{2})+(\lambda_{2},\lambda_{2})}\frac{g^{\lambda_{1}}_{i\lambda_{2}}a_{i}}{a_{\lambda_{1}}}.$$

Note that
$$(\lambda_{1},\lambda_{1})=(\lambda_{2}+i,\lambda_{2}+i)=(\lambda_{2},\lambda_{2})+2(\lambda_{2},i)+(i,i)$$
and
$$a_{i}=(v_{i}^{2}-1)=v^{(i,i)}(1-v_{i}^{-2})^{-1},$$
we are done.
\end{proof}

By Lemma \ref{lem 8 calcul Ringel form} we can compute the pairing
$(-,-)_{R}$ inductively, similar to Proposition \ref{prop 2
Kashiwara form}.

\begin{lem}\label{lem 8 Ringel form}
For any $x,y\in U^{+}$, we have
\begin{gather*}
(1,1)_{R}=1,\\
(E_{i}x,y)_{R}=(1-v_{i}^{-2})^{-1}(x,f_{i}'(y))_{R}.
\end{gather*}
\end{lem}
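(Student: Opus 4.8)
The plan is to deduce both identities directly from the data already in hand: the defining formula for $(-,-)_{R}$, Lemma \ref{lem 8 calcul Ringel form}, the symmetry of $(-,-)_{R}$, and the identification $r'_{i}=f'_{i}$ of Lemma \ref{lem 3 derivation coincide}. In effect this lemma is nothing more than a repackaging of Lemma \ref{lem 8 calcul Ringel form} into the shape of the characterising property of Kashiwara's pairing in Proposition \ref{prop 2 Kashiwara form}, which is exactly what one wants before comparing $(-,-)_{R}$ with $(-,-)_{K}$.

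The identity $(1,1)_{R}=1$ is immediate from the definition: under the present conventions $1=\langle u_{0}\rangle$ is the class of the zero module, for which $(0,0)=0$ and $a_{0}=|\Aut_{\Lambda}(0)|=1$, so $(\langle u_{0}\rangle,\langle u_{0}\rangle)_{R}=v^{0}\cdot 1^{-1}=1$. For the second identity I would first reduce to basis elements: both sides are $\mathbb{Q}(v)$-bilinear in $x$ and $y$, and since the defining formula for $(-,-)_{R}$ is diagonal in the basis $\{\langle u_{\beta}\rangle\}$ the form is $Q_{+}$-graded, while multiplication by $E_{i}$ shifts the grading by $i$ and $f'_{i}$ shifts it by $-i$, so on homogeneous $x,y$ both sides vanish unless the degrees match up; thus it suffices to take $x=\langle u_{\lambda_{1}}\rangle$, $y=\langle u_{\lambda_{2}}\rangle$ with $\lambda_{1},\lambda_{2}\in\mathcal{P}$. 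Under the identification $U^{+}\cong\mathscr{C}(\Delta)$ we have $E_{i}=\langle u_{i}\rangle$, so $E_{i}x=\langle u_{i}\rangle\langle u_{\lambda_{1}}\rangle$; using symmetry of $(-,-)_{R}$, then Lemma \ref{lem 8 calcul Ringel form} (read with $\langle u_{\lambda_{2}}\rangle$ in the left slot and $\langle u_{\lambda_{1}}\rangle$ in the right), then symmetry again, then $r'_{i}=f'_{i}$:
\begin{align*}
(E_{i}x,y)_{R}&=(\langle u_{\lambda_{2}}\rangle,\langle u_{i}\rangle\langle u_{\lambda_{1}}\rangle)_{R}=(1-v_{i}^{-2})^{-1}(r'_{i}(\langle u_{\lambda_{2}}\rangle),\langle u_{\lambda_{1}}\rangle)_{R}\\
&=(1-v_{i}^{-2})^{-1}(\langle u_{\lambda_{1}}\rangle,r'_{i}(\langle u_{\lambda_{2}}\rangle))_{R}=(1-v_{i}^{-2})^{-1}(x,f'_{i}(y))_{R},
\end{align*}
which is the asserted formula.

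The one point I would be careful to spell out is that everything here must be read in the generic setting: $\langle u_{\lambda}\rangle$, the Hall multiplication, the derivations $r'_{i}=f'_{i}$, and the pairing $(-,-)_{R}$ all have to be transported to $\mathscr{C}(\Delta)$ — equivalently, one works inside $\Pi=\prod_{n>0}\mathscr{H}_{n}$ and restricts — and one must observe that Lemma \ref{lem 8 calcul Ringel form} is itself a generic identity, which it is, since every quantity appearing in its proof ($v^{\langle\gamma,\rho\rangle}$, the Hall numbers $g^{\lambda}_{\gamma\rho}$, the orders $a_{\alpha}$) is given by a Laurent polynomial in $v$ uniform in the finite ground field. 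Beyond this bookkeeping there is no genuine obstacle; the argument is a short formal consequence of the lemmas already established.
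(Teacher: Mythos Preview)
Your proof is correct and follows essentially the same route as the paper: identify $E_{i}$ with $\langle u_{i}\rangle$, invoke Lemma \ref{lem 8 calcul Ringel form} together with the symmetry of $(-,-)_{R}$, and then replace $r'_{i}$ by $f'_{i}$ via Lemma \ref{lem 3 derivation coincide}. The paper's proof is simply a terser one-line version of your argument, omitting the explicit reduction to basis elements and the two symmetry steps that you spell out.
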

\begin{proof}
By definition and Lemma \ref{lem 8 calcul Ringel form} we have
\begin{equation*}
\begin{split}
(E_{i}x,y)_{R} &=(\langle
u_{i}\rangle x,y)_{R}=(1-v_{i}^{-2})^{-1}(x,r_{i}'(y))_{R}\\
&=(1-v_{i}^{-2})^{-1}(x,f_{i}'(y))_{R}.\\
\end{split}
\end{equation*}
\end{proof}

\begin{lem}\label{lem 8 form of L in A}
For any $x,y\in U^{+}$, we have $(x,y)_{K}\in A$ if and only if
$(x,y)_{R}\in A$. In particular,
\begin{equation*}
(L(\infty),L(\infty))_{R}\subset A.
\end{equation*}
\end{lem}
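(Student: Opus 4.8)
The plan is to show that, within each $Q_{+}$-homogeneous component of $U^{+}$, the forms $(-,-)_{R}$ and $(-,-)_{K}$ are proportional by a scalar that is a unit of $A$; lying in $A$ is then independent of which form one uses, and the stated inclusion for $L(\infty)$ follows by decomposing into weight spaces. I begin by noting that both forms are $Q_{+}$-graded, i.e.\ $(x,y)_{R}=(x,y)_{K}=0$ whenever $x\in U^{+}_{\mu}$, $y\in U^{+}_{\nu}$ with $\mu\ne\nu$. For $(-,-)_{R}$ this is immediate from its definition, since the basis $\langle u_{\beta}\rangle$ is homogeneous and the form is supported on the diagonal; for $(-,-)_{K}$ it follows by peeling off generators via Proposition \ref{prop 2 Kashiwara form} and using that $f_{i}'$ lowers the $Q_{+}$-degree by $i$.

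The main point is the identity: for $\nu=\sum_{i\in I}c_{i}\,i\in Q_{+}$,
\begin{displaymath}
(x,y)_{R}=\Big(\prod_{i\in I}(1-v_{i}^{-2})^{-c_{i}}\Big)(x,y)_{K}\qquad\text{for all }x,y\in U^{+}_{\nu},
\end{displaymath}
which I would prove by induction on the height $\sum_{i}c_{i}$. The base case $\nu=0$ is $(1,1)_{R}=1=(1,1)_{K}$. For $\nu\ne0$, the space $U^{+}_{\nu}$ is spanned by elements $E_{i}x'$ with $x'\in U^{+}_{\nu-i}$, and since $f_{i}'(y)\in U^{+}_{\nu-i}$ for $y\in U^{+}_{\nu}$, applying Lemma \ref{lem 8 Ringel form}, then the inductive hypothesis in degree $\nu-i$, then the defining recursion of $(-,-)_{K}$ in reverse gives
\begin{displaymath}
(E_{i}x',y)_{R}=(1-v_{i}^{-2})^{-1}\Big(\prod_{j\in I}(1-v_{j}^{-2})^{-c'_{j}}\Big)(E_{i}x',y)_{K},
\end{displaymath}
where $\nu-i=\sum_{j}c'_{j}\,j$; since $c'_{i}=c_{i}-1$ and $c'_{j}=c_{j}$ for $j\ne i$, the scalar is exactly $\prod_{j}(1-v_{j}^{-2})^{-c_{j}}$, and bilinearity finishes the induction.

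It then suffices to observe that $1-v_{i}^{-2}=1-v^{-2\varepsilon_{i}}$ is a unit of $A$: it is a polynomial in $v^{-1}$, hence lies in $A$, and its inverse $\sum_{k\ge0}v^{-2k\varepsilon_{i}}$ lies in $\mathbb{Q}[[v^{-1}]]\cap\mathbb{Q}(v)=A$. Hence $c_{\nu}:=\prod_{i}(1-v_{i}^{-2})^{-c_{i}}$ is a unit of $A$ for every $\nu\in Q_{+}$, so $(x,y)_{R}\in A$ if and only if $(x,y)_{K}\in A$ for $x,y$ in a common homogeneous component (which is the form in which the equivalence is used). For the displayed inclusion, given $x,y\in L(\infty)$ write $x=\sum_{\nu}x_{\nu}$, $y=\sum_{\nu}y_{\nu}$ with $x_{\nu},y_{\nu}\in L(\infty)_{\nu}\subseteq L(\infty)$ by axiom (3) of a crystal basis; then $(x_{\nu},y_{\nu})_{K}\in A$ by Proposition \ref{prop 2 Kashiwara form}(b), whence $(x_{\nu},y_{\nu})_{R}=c_{\nu}(x_{\nu},y_{\nu})_{K}\in A$, and by gradedness of $(-,-)_{R}$ we conclude $(x,y)_{R}=\sum_{\nu}(x_{\nu},y_{\nu})_{R}\in A$.

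The step I expect to be most delicate is the inductive proportionality: one must keep careful track of how the exponents $c_{i}$ change when a generator $E_{i}$ is removed, and ensure that Lemma \ref{lem 8 Ringel form} and the $(-,-)_{K}$-recursion are applied only to homogeneous arguments so that the inductive hypothesis is available in precisely the degree $\nu-i$. The gradedness of the two forms and the verification that $1-v_{i}^{-2}$ is a unit of $A$ are routine.
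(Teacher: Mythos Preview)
Your proof is correct and follows the same inductive idea as the paper: both peel off a generator $E_{i}$ using Lemma \ref{lem 8 Ringel form} and the defining recursion of $(-,-)_{K}$, together with the observation that $1-v_{i}^{-2}$ is a unit of $A$. Your version is slightly sharper: rather than arguing only that membership in $A$ is preserved, you establish the explicit proportionality $(x,y)_{R}=\prod_{i}(1-v_{i}^{-2})^{-c_{i}}\,(x,y)_{K}$ on each weight space $U^{+}_{\nu}$, which in fact already contains Proposition \ref{prop 8 forms coincide} (since the scalar lies in $1+v^{-1}A$). Your explicit restriction to homogeneous elements, followed by the weight-space decomposition of $L(\infty)$ for the ``in particular'' clause, is the appropriate way to make the induction rigorous.
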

\begin{proof}
First we have
\begin{equation*}
(1,1)_{R}=(1,1)_{K}=1.
\end{equation*}
For any $x,y\in U^{+}$, by Lemma \ref{lem 8 Ringel form}, we know
\begin{equation*}
(E_{i}x,y)_{R}=(1-v_{i}^{-2})^{-1}(x,f_{i}'(y))_{R}.
\end{equation*}
Since $(1-v_{i}^{-2})^{-1}$ and $1-v_{i}^{-2}$ are both in $A$, we
have $(E_{i}x,y)_{R}\in A$ if and only if $(x,f_{i}'(y))_{R}\in A$.
On the other hand. by Proposition \ref{prop 2 Kashiwara form},
\begin{equation*}
(E_{i}x,y)_{K}=(x,f_{i}'(y))_{K}.
\end{equation*}
Now the assertion of this lemma follows immediately by induction.
\end{proof}

Hence the form $(-,-)_{R}$ induces a $\mathbb{Q}$-bilinear form
$(-,-)_{R,0}$ on $L(\infty)/v^{-1}L(\infty)$:
\begin{displaymath}
(x+v^{-1}L(\infty),y+v^{-1}L(\infty))_{R,0}=(x,y)_{R}+v^{-1}A
\end{displaymath}
for any $x,y\in L(\infty)$. The next proposition says that the two
pairings $(x,y)_{R,0}$ and $(x,y)_{K,0}$ coincide.

\begin{prop}\label{prop 8 forms coincide}
For any $x,y\in L(\infty)$, if we denote their images in
$L(\infty)/v^{-1}L(\infty)$ still by $x,y$, then we have
\begin{equation*}
(x,y)_{R,0}=(x,y)_{K,0}.
\end{equation*}
\end{prop}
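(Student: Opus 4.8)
The strategy is to upgrade the desired congruence to an \emph{exact} identity on each graded piece of $U^{+}$, where the two forms turn out to differ only by a unit of $A$ that is congruent to $1$ modulo $v^{-1}$.

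\textbf{Step 1 (an exact comparison).} I claim that for any $\nu=\sum_{i\in I}n_{i}\,i\in Q_{+}$ and any $x,y\in U^{+}_{\nu}$,
$$(x,y)_{R}=\Big(\prod_{i\in I}(1-v_{i}^{-2})^{-n_{i}}\Big)(x,y)_{K}.$$
I would prove this by induction on the height $\sum_{i}n_{i}$ of $\nu$. For $\nu=0$ both sides equal $(1,1)_{R}=(1,1)_{K}=1$. For $\nu\neq 0$ we have $U^{+}_{\nu}=\sum_{j\in I}E_{j}U^{+}_{\nu-j}$, and since for each fixed $y$ the claimed identity is linear in $x$, it suffices to check it when $x=E_{j}x'$ with $x'\in U^{+}_{\nu-j}$. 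Then Lemma~\ref{lem 8 Ringel form} gives $(E_{j}x',y)_{R}=(1-v_{j}^{-2})^{-1}(x',f_{j}'(y))_{R}$, Proposition~\ref{prop 2 Kashiwara form}(a) gives $(E_{j}x',y)_{K}=(x',f_{j}'(y))_{K}$, and $f_{j}'(y)\in U^{+}_{\nu-j}$; applying the inductive hypothesis to the pair $(x',f_{j}'(y))$ in degree $\nu-j$ (whose multiplicities are $n_{j}-1$ in the $j$-slot and $n_{i}$ elsewhere) and multiplying back by the single extra factor $(1-v_{j}^{-2})^{-1}$ produces exactly $\prod_{i}(1-v_{i}^{-2})^{-n_{i}}$, as wanted.

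\textbf{Step 2 (reduction modulo $v^{-1}$).} Since $(i,i)\in\{2,4,6,\dots\}$ we have $\varepsilon_{i}\ge 1$, hence $v_{i}^{-2}=v^{-2\varepsilon_{i}}\in v^{-1}A$ and
$$\prod_{i\in I}(1-v_{i}^{-2})^{-n_{i}}=\prod_{i\in I}\Big(\sum_{k\ge 0}v^{-2k\varepsilon_{i}}\Big)^{n_{i}}\in 1+v^{-1}A.$$
Now take $x,y\in L(\infty)$. Both $(-,-)_{R}$ and $(-,-)_{K}$ vanish on pairs of homogeneous elements of different $Q_{+}$-degree, so it is enough to compare the two forms on the homogeneous components $x_{\nu},y_{\nu}\in L(\infty)\cap U^{+}_{\nu}$. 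By Proposition~\ref{prop 2 Kashiwara form}(b), $(x_{\nu},y_{\nu})_{K}\in A$; multiplying an element of $A$ by a unit lying in $1+v^{-1}A$ leaves its class in $A/v^{-1}A$ unchanged, so Step 1 gives $(x_{\nu},y_{\nu})_{R}\equiv (x_{\nu},y_{\nu})_{K}\pmod{v^{-1}A}$ (in particular $(x_{\nu},y_{\nu})_{R}\in A$, which incidentally gives another proof of Lemma~\ref{lem 8 form of L in A}). Summing over $\nu$ yields $(x,y)_{R}\equiv (x,y)_{K}\pmod{v^{-1}A}$, which is exactly $(x,y)_{R,0}=(x,y)_{K,0}$.

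The only delicate point is the inductive identity of Step 1, and even there the apparent subtlety — that a given $x$ can be written as $E_{j}x'$ in many ways — is harmless, since the identity is linear in $x$ and therefore need only be verified on the spanning set $\{E_{j}x'\}$; after that it is a one-line substitution of the two recursion formulas together with the inductive hypothesis in degree $\nu-j$. I expect no other obstacle: Step 2 is purely formal once Step 1 and the elementary observation $v_{i}^{-2}\in v^{-1}A$ are in hand.
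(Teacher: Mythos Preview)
Your proof is correct and follows the same idea as the paper's: both forms obey parallel recursions (Proposition~\ref{prop 2 Kashiwara form}(a) and Lemma~\ref{lem 8 Ringel form}) differing only by the factor $(1-v_{i}^{-2})^{-1}\in 1+v^{-1}A$, so an induction on the height of $\nu$ gives the congruence. The paper states this in one line, while you make the induction explicit and in fact prove the sharper exact identity $(x,y)_{R}=\prod_{i}(1-v_{i}^{-2})^{-n_{i}}(x,y)_{K}$ on $U^{+}_{\nu}$ before reducing modulo $v^{-1}$; this is a nice strengthening but not a different approach.
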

\begin{proof}
Just compare Proposition \ref{prop 2 Kashiwara form} and Lemma
\ref{lem 8 form of L in A}, and note that
\begin{equation*}
\dfrac{1}{1-v_{i}^{-2}}=1+\dfrac{v_{i}^{-2}}{1-v_{i}^{-2}}\in \
1+v^{-1}A.
\end{equation*}
\end{proof}

Thus we can use the pairing $(-,-)_{R}$ instead of $(-,-)_{K}$ to
characterize the crystal bases.

\begin{lem}\label{lem 8 char of L and B by Ringel form}
We have the following characterizations of $L(\infty)$ and
$B(\infty)$:
\begin{gather*}
L(\infty)=\{x\in U^{+}|(x,x)_{R}\in A\},\\
B(\infty)\cup(-B(\infty))=\{x\in
L_{\mathbb{Z}}(\infty)/v^{-1}L_{\mathbb{Z}}(\infty)|(x,x)_{R,0}=1\}.
\end{gather*}
\end{lem}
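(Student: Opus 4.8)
The plan is to reduce both equalities to the corresponding statements for Kashiwara's pairing, Proposition \ref{prop 2 char of L}(b) and Proposition \ref{prop 2 char of B}(c), which are already at our disposal. Almost all of the work has in fact been done in the preceding lemmas: Lemma \ref{lem 8 form of L in A} tells us that for $x,y\in U^{+}$ one has $(x,y)_{K}\in A$ if and only if $(x,y)_{R}\in A$, and Proposition \ref{prop 8 forms coincide} tells us that the induced $\mathbb{Q}$-bilinear forms $(-,-)_{R,0}$ and $(-,-)_{K,0}$ on $L(\infty)/v^{-1}L(\infty)$ coincide. So the strategy is simply to substitute these two facts into the known characterizations.

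For the first equality, I would start from $L(\infty)=\{x\in U^{+}\mid(x,x)_{K}\in A\}$, which is Proposition \ref{prop 2 char of L}(b), and then apply Lemma \ref{lem 8 form of L in A} with $y=x$: the condition $(x,x)_{K}\in A$ is equivalent to $(x,x)_{R}\in A$, so the right-hand set does not change when $(-,-)_{K}$ is replaced by $(-,-)_{R}$, giving $L(\infty)=\{x\in U^{+}\mid(x,x)_{R}\in A\}$ at once.

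For the second equality, I would start from Proposition \ref{prop 2 char of B}(c), namely $B(\infty)\cup(-B(\infty))=\{x\in L_{\mathbb{Z}}(\infty)/v^{-1}L_{\mathbb{Z}}(\infty)\mid(x,x)_{K,0}=1\}$, and show that on $L_{\mathbb{Z}}(\infty)/v^{-1}L_{\mathbb{Z}}(\infty)$ the condition $(x,x)_{K,0}=1$ agrees with $(x,x)_{R,0}=1$. Since $L_{\mathbb{Z}}(\infty)\subset L(\infty)$ and the natural map $L_{\mathbb{Z}}(\infty)/v^{-1}L_{\mathbb{Z}}(\infty)\to L(\infty)/v^{-1}L(\infty)$ is injective (this is implicit in Proposition \ref{prop 2 char of B}(b), where $L_{\mathbb{Z}}(\infty)/v^{-1}L_{\mathbb{Z}}(\infty)$ is realized as the $\mathbb{Z}$-span of $B(\infty)$ inside $L(\infty)/v^{-1}L(\infty)$), any such $x$ lifts to an element of $L_{\mathbb{Z}}(\infty)\subset L(\infty)$, and Proposition \ref{prop 8 forms coincide} yields $(x,x)_{R,0}=(x,x)_{K,0}$. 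Hence the two defining conditions describe the same subset, and the second equality follows.

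I do not expect a serious obstacle here; the closest thing to one is the bookkeeping needed to see that the restriction of $(-,-)_{R,0}$ to the $\mathbb{Z}$-lattice $L_{\mathbb{Z}}(\infty)/v^{-1}L_{\mathbb{Z}}(\infty)$ is the restriction of the form already studied on $L(\infty)/v^{-1}L(\infty)$, so that Proposition \ref{prop 8 forms coincide} applies verbatim. There is no new computation involved: everything rests on the fact established in Lemma \ref{lem 8 Ringel form} that $(-,-)_{R}$ obeys the same $f_{i}'$-adjointness recursion as $(-,-)_{K}$ up to the scalar $(1-v_{i}^{-2})^{-1}$, together with the observation $(1-v_{i}^{-2})^{-1}\in 1+v^{-1}A$ used in Proposition \ref{prop 8 forms coincide}.
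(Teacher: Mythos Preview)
Your proposal is correct and follows exactly the approach of the paper's own proof, which simply cites Proposition \ref{prop 2 char of L}(b), Proposition \ref{prop 2 char of B}(c), Lemma \ref{lem 8 form of L in A}, and Proposition \ref{prop 8 forms coincide} and says the lemma is an easy consequence. You have merely spelled out the substitution in more detail than the paper does.
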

\begin{proof}
Recall Proposition \ref{prop 2 char of L}(b) and \ref{prop 2 char of
B}(c). The assertion of this corollary is an easy consequence of
Lemma \ref{lem 8 form of L in A} and Proposition \ref{prop 8 forms
coincide}.
\end{proof}

\textbf{8.2 The proof.}\ \ We need to calculate the pairing
$(\langle u_{\lambda}\rangle, \langle u_{\lambda}\rangle)_{R}$ for
each exceptional module $V_{\lambda}$.

First we assume that $V_{\lambda}$ is indecomposable. In this case
the calculation is easy since the endomorphism ring of $V_{\lambda}$
is a division ring. namely
\begin{equation*}
\begin{split}
(\langle u_{\lambda}\rangle, \langle u_{\lambda}\rangle)_{R} &
=\frac{v^{(\lambda,\lambda)}}{a_{\lambda}}=\frac{|\End_{\Lambda}(V_{\lambda})|}{|\Aut_{\Lambda}(V_{\lambda})|}\\
&
=\frac{v^{2\varepsilon(\lambda)}}{v^{2\varepsilon(\lambda)}-1}=\frac{1}{1-v^{-2\varepsilon(\lambda)}}\\
&
=1+\frac{v^{-2\varepsilon(\lambda)}}{1-v^{-2\varepsilon(\lambda)}}\
\in 1+v^{-1}A.
\end{split}
\end{equation*}

Secondly, for n-copies of an indecomposable $V_{\lambda}$ we get
(using Lemma \ref{lem 7 calcul End of decomposables}(3)):
\begin{equation*}
\begin{split}
(\langle u_{\lambda}\rangle^{(n)}, \langle
u_{\lambda}\rangle^{(n)})_{R} & =(\langle
u_{n\lambda}\rangle,\langle
u_{n\lambda}\rangle)=\frac{|\End_{\Lambda}(V_{n\lambda})|}{a_{n\lambda}}\\
& =\frac{v^{2n^{2}\varepsilon(\lambda)}}{\prod_{0\leq t\leq
n-1}(v^{2n\varepsilon(\lambda)}-v^{2t\varepsilon(\lambda)})}\\
& =\prod_{0\leq t\leq n-1}
\frac{1}{1-{v^{-2(n-t)\varepsilon(\lambda)}}}\ \in 1+v^{-1}A.
\end{split}
\end{equation*}

Now we can deal with any exceptional module $V_{\lambda}$. Assume
$V_{\lambda}\cong s_{1}V_{\lambda_{1}}\oplus\cdots\oplus
s_{t}V_{\lambda_{t}}$ such that $V_{\lambda_{i}}$ is indecomposable
for any $i$ and $V_{\lambda_{i}}\ncong V_{\lambda_{j}}$ for any
$i\neq j$. Thus
$$|\End_{\Lambda}(V_{\lambda})|=v^{2\sum_{i=1}^{t}s_{i}^{2}\varepsilon(\lambda_{i})+2s}$$
where $s=\sum_{i\neq
j}s_{i}s_{j}\dim_{k}\Hom_{\Lambda}(V_{\lambda_{i}},V_{\lambda_{j}})$.

By lemma \ref{lem 7 calcul End of decomposables} (2) we have
$a_{\lambda}=v^{2s}a_{s_{1}\lambda_{1}}\cdots a_{s_{t}\lambda_{t}}$,
hence

\begin{equation*}
\begin{split}
(\langle u_{\lambda}\rangle,\langle u_{\lambda}\rangle)_{R} &
=\frac{v^{2\sum_{i=1}^{t}s_{i}^{2}\varepsilon(\lambda_{i})+2s}}{v^{2s}a_{s_{1}\lambda_{1}}\cdots
a_{s_{t}\lambda_{t}}}=\frac{v^{2\sum_{i=1}^{t}s_{i}^{2}\varepsilon(\lambda_{i})}}{a_{s_{1}\lambda_{1}}\cdots
a_{s_{t}\lambda_{t}}}\\
&
=\frac{v^{2\sum_{i=1}^{t}s_{i}^{2}\varepsilon(\lambda_{i})}}{\prod_{i=1}^{t}\prod_{0\leq
t_{i}\leq
s_{i}-1}(v^{2s_{i}\varepsilon(\lambda_{i})}-v^{2t_{i}\varepsilon(\lambda_{i})})}\\
& =\prod_{i=1}^{t}\prod_{0\leq t_{i}\leq
s_{i}-1}(\frac{1}{1-v^{-2(s_{i}-t_{i})\varepsilon(\lambda_{i})}}) \
\in 1+v^{-1}A.
\end{split}
\end{equation*}

Thus we have proved that for any exceptional module $V_{\lambda}$,
$$(\langle u_{\lambda}\rangle,\langle u_{\lambda}\rangle)_{R}\in
A\ \ \text{and}\ \ (\langle u_{\lambda}\rangle,\langle
u_{\lambda}\rangle)_{R,0}=1.$$

Hence by Theorem \ref{thm 6 exc in integral comp alg} and Lemma
\ref{lem 8 char of L and B by Ringel form} we reach our goal.

\begin{rem}\label{rem 8 rank 2 case}
(1). In the rank $2$ case (i.e. $|I|=2$ in the Cartan datum ), the
sign can be removed (See \cite{s} and \cite{l2}). However, it seems
that their methods do not work in general.

(2). The formulas given in Theorem \ref{thm 5 exc sequence} can be
viewed as an inductive algorithm to obtain certain crystal basis
elements from the Chevalley generators.
\end{rem}

\section{Remove the sign}\label{section 9}
We have shown by algebraic methods that the elements corresponding
to exceptional modules lie in the crystal bases up to a sign.
Intuitively the sign should be removed since it does for rank $2$
case. In this section we will remove the sign with geometric methods
due to Lusztig. For convenience, we only consider the case of
symmetric Cartan datum.

So let us consider a quiver without cycles, that is $Q=(I,H,s,t)$,
where $I$ is the vertex set, $H$ is the arrow set and two maps $s,t$
indicate the start points and terminal points of arrows
respectively. let $F_{q}$ denotes a finite field of $q=p^{e}$
elements, where $p$ is a prime number. As in $3.1$, we get a Hall
algebra from the representation of $\Lambda = F_{q}Q$, hence a
realization of $U^{+}$.

Now we give a short review of two constructions by Lusztig. The
first is the geometrical construction of Hall algebras. For any
finite dimensional $I$-graded $F_{q}$-vector space $W=\sum_{i\in
I}W_{i}$, consider the moduli space of representation of $Q$:
$$E_{W}=\bigoplus_{\rho \in H} \Hom(W_{s(\rho)},W_{t(\rho)})$$ The
group $G_{W}=\prod_{i \in I}GL(W_{i})$ acts on $E_{W}$ naturally.
Let $\mathbb{C}_{G}(E_{W})$ be the space of $G_{W}$-invariant
functions $E_{W}\rightarrow \mathbb{C}$. For $\underline{c} \in
\mathbb{N}I$, we fix a $I$-graded $F_{q}$-vector space
$W_{\underline{c}}$ with $\dimv W_{\underline{c}}=\underline{c}$,
and denote $E_{\underline{c}}=E_{W_{\underline{c}}},
G_{\underline{c}}=G_{W_{\underline{c}}}$. Then the multiplication
can be defined in the $\mathbb{C}$-space $K=\bigoplus_{\underline{a}
\in \mathbb{N}I}\mathbb{C}_{G}(E_{\underline{a}})$, which makes $K$
an associative $\mathbb{C}$-algebra. Corresponding to $\alpha \in
\mathcal{P}$, let $\mathcal{O}_{\alpha}\subset E_{\underline{a}}$ be
the $G_{\underline{a}}$-orbit of module $V_{\alpha}\in
E_{\underline{a}}$. We take $1_{\alpha}\in
\mathbb{C}_{G}(E_{\underline{a}})$ to be the characteristic function
of $\mathcal{O}_{\alpha}$, and set $f_{\alpha}= v_{q}^{-\dim
\mathcal{O}_{\alpha}}1_{\alpha}$ where $v_{q}=\sqrt{q}$. Thus $K$ is
just the so-called Hall algebra if $f_{\alpha}$ is identified with
$\langle u_{\alpha}\rangle$ for all $\alpha \in \mathcal{P}$.

The second is the construction of $U^{+}$ in terms of perverse
sheaves. $E_{W}$ can be defined over an algebraic closure of the
finite field  $F_{p}$ of $p$ elements. $E_{W}$ has  a natural
$F_{p^{e}}$-structure with Frobenius map: $F^{e}: E_{W}\rightarrow
E_{W}$. Thus the $F_{q}$-rational points $E^{F^{e}}_{W}$ of $E_{W}$
provide an $F_{q}$-structure as above. Let
$D(E_{W})=D^{b}_{c}(E_{W})$ be the bounded derived category of
$\overline{\mathbb{Q}_{l}}$- constructible sheaves; here $l$ is a
fixed prime number distinct from $p$ and $\overline{\mathbb{Q}_{l}}$
is an algebraic closure of the field of $l$-adic numbers. Let
$P_{W}$ be the set of isomorphism classes of simple perverse sheaves
subject to some extra condition (see \cite{l1}). Then we can
associate $P_{W}$ a full subcategory $Q_{W}$ of $D(E_{W})$ and let
$K_{W}$ be the Grothendieck group of $Q_{W}$. Hence $\coprod_{W}
K_{W}$ gives a realization  of $U^{+}$ with $\coprod_{W} P_{W}$ as
its bases, that is the canonical bases.

For an exceptional module $V_{\lambda}$, $\mathcal{O}_{\lambda}$ is
the corresponding orbit. We know from \cite{l3} that the
intersection cohomology complex
$$IC(\mathcal{O_{\lambda}},\overline{\mathbb{Q}_{l}})=j_{!*}(\overline{\mathbb{Q}_{l}})[\dim
\mathcal{O}_{\lambda}]$$ belongs to the canonical bases $P_{W}$ of
$K_{W}$ where $j$ is the natural embedding
$\mathcal{O}_{\lambda}\rightarrow E_{W}$.

In \cite{l4}, Lusztig considered the correspondence between
functions on the moduli space and the elements of the canonical
bases. In other words, he showed what kind of functions lies in the
canonical bases when comparing the two constructions mentioned
above. We will use the notations in \cite{l4} from now on.

We only need to consider a special kind of canonical bases, i.e.
$b=IC(\mathcal{O_{\lambda}},\overline{\mathbb{Q}_{l}})$. $b$
corresponds to $(b_{e})_{e\geq1}$, $b_{e}: E^{F^{e}}_{W}\rightarrow
\overline{\mathbb{Q}_{l}}$. Sometimes $\overline{\mathbb{Q}_{l}}$
can be identified with $\mathbb{C}$. For $x \in E^{F^{e}}_{W}$,
$b_{e}(x)$ is the alternative sum of the trace of the induced
Frobenius  map on the stalk  at $x$ of the $i$-th cohomology sheaf
of $IC(\mathcal{O_{\lambda}},\overline{\mathbb{Q}_{l}})$.
$$b_{e}(x)=\sum_{i\in \mathbb{Z}}(-1)^{i}Tr(F^{e};H_{x}^{i}(j_{!*}(\overline{\mathbb{Q}_{l}})[\dim \mathcal{O_{\lambda}}])$$

Now that $\mathcal{O}_{\lambda}$ is an open dense subset of $E_{W}$,
we can deduce as follows.

Firstly, the result below is due to Gabber:
$$Tr(F^{e};H_{x}^{i}(j_{!*}(\overline{\mathbb{Q}_{l}})[\dim \mathcal{O_{\lambda}}]) \in q^{-\frac{\dim \mathcal{O}_{\lambda}}{2}}\mathbb{Z}[q^{-1}]$$

For $x \in E_{W}$, $F^{e}(x)=x \in \mathcal{O}_{\lambda}$, we have
$$Tr(F^{e};H_{x}^{i}(j_{!*}(\overline{\mathbb{Q}_{l}})[\dim \mathcal{O_{\lambda}}])= Tr(F^{e};H_{x}^{i}(\mathcal{O}_{\lambda},\overline{\mathbb{Q}_{l}}[\dim \mathcal{O}_{\lambda}])=q^{-\frac{\dim \mathcal{O}_{\lambda}}{2}}\delta_{i,dim \mathcal{O}_{\lambda}}$$

While for $y \in E_{W}$, $F^{e}(y)=y \notin \mathcal{O}_{\lambda}$,
any open neighborhood $U_{y}$ of $y$,
$$U_{y}\cap \mathcal{O}_{\lambda}= U'_{y}\neq \varnothing  \Longrightarrow H_{y}^{i}(j_{!*}(\overline{\mathbb{Q}_{l}})[\dim \mathcal{O_{\lambda}}])\cong H_{x}^{i}(j_{!*}(\overline{\mathbb{Q}_{l}})[\dim \mathcal{O_{\lambda}}])$$

So $b_{e}$ is actually a constant function
$$b_{e}= q^{-\frac{\dim \mathcal{O}_{\lambda}}{2}}(-1)^{\dim \mathcal{O}_{\lambda}}= (-q^{\frac{1}{2}})^{-\dim \mathcal{O}_{\lambda}}$$

We remark that $v=-q^{\frac{1}{2}}$ in the literature of \cite{l4},
so $b_{e}=v^{-\dim \mathcal{O}_{\lambda}}$. let $\mathcal{O}_{y}$ be
the orbit of $y$, we obtain:
$$b_{e}|_{\mathcal{O}_{\lambda}} = v^{-\dim \mathcal{O}_{\lambda}}1_{\mathcal{O}_{\lambda}}$$
$$b_{e}|_{\mathcal{O}_{y}} = v^{-\dim E_{W}}1_{\mathcal{O}_{y}} = v^{-(\dim E_{W}-\dim \mathcal{O}_{y})} v^{-\dim \mathcal{O}_{y}}1_{\mathcal{O}_{y}}$$

This means the image of $b_{e}$ in
$L_{\mathbb{Z}}(\infty)/v^{-1}L_{\mathbb{Z}}(\infty)$ is equal to
that of $\langle u_{\lambda}\rangle$ in
$L_{\mathbb{Z}}(\infty)/v^{-1}L_{\mathbb{Z}}(\infty).$ Hence we
obtain the finally result, which is a stronger version of Theorem
6.2.

\begin{thm}\label{thm 9 remove the sign}
Let $\Lambda$ be a finite-dimensional hereditary $k$-algebra. Then
for any exceptional module $V_{\lambda}$, we have
\begin{displaymath}
\langle u_{\lambda}\rangle\in B(\infty).
\end{displaymath}
i.e. the image of $\langle u_{\lambda}\rangle$ in
$L_{\mathbb{Z}}(\infty)/v^{-1}L_{\mathbb{Z}}(\infty)$ lies in the
crystal basis.
\end{thm}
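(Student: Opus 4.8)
The plan is to pin down the sign left undetermined in Theorem \ref{thm 6 exc in crystal basis} by comparing $\langle u_{\lambda}\rangle$ with Lusztig's geometric canonical basis, using in an essential way that $V_{\lambda}$ being exceptional forces the orbit $\mathcal{O}_{\lambda}\subset E_{W}$ to be open and dense. Indeed, since $\Ext^{1}_{\Lambda}(V_{\lambda},V_{\lambda})=0$, the codimension of $\mathcal{O}_{\lambda}$ in the irreducible affine space $E_{W}$ equals $\dim_{k}\Ext^{1}_{\Lambda}(V_{\lambda},V_{\lambda})=0$, so $\mathcal{O}_{\lambda}$ is the unique dense $G_{W}$-orbit and every other orbit $\mathcal{O}_{y}$ satisfies $\dim E_{W}-\dim\mathcal{O}_{y}\ge 1$. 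This is exactly what makes the function computation above go through.

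The heart of the argument is that computation: $b=IC(\mathcal{O}_{\lambda},\overline{\mathbb{Q}_{l}})$ is the constant sheaf shifted by $\dim\mathcal{O}_{\lambda}$, Gabber's purity theorem controls the weights of the stalks, and one deduces that the associated trace function $b_{e}$ is the constant function $v^{-\dim\mathcal{O}_{\lambda}}$. Restricting $b_{e}$ orbit by orbit, its component on $\mathcal{O}_{\lambda}$ is exactly $f_{\lambda}=\langle u_{\lambda}\rangle$, while its component on any other orbit $\mathcal{O}_{y}$ is $v^{-(\dim E_{W}-\dim\mathcal{O}_{y})}v^{-\dim\mathcal{O}_{y}}1_{\mathcal{O}_{y}}$ with $\dim E_{W}-\dim\mathcal{O}_{y}\ge 1$, hence dies in $L_{\mathbb{Z}}(\infty)/v^{-1}L_{\mathbb{Z}}(\infty)$. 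Therefore the image of $b_{e}$ in $L_{\mathbb{Z}}(\infty)/v^{-1}L_{\mathbb{Z}}(\infty)$ coincides with the image of $\langle u_{\lambda}\rangle$; here I am using that $\langle u_{\lambda}\rangle\in L_{\mathbb{Z}}(\infty)$, which was established in the course of proving Theorem \ref{thm 6 exc in crystal basis}.

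To finish I would invoke two inputs. First, by \cite{l3} the complex $b=IC(\mathcal{O}_{\lambda},\overline{\mathbb{Q}_{l}})$ is a member of the canonical basis $\coprod_{W}P_{W}$ of $U^{+}$. Second, the theorem of Lusztig and Kashiwara identifying the canonical basis with the global crystal basis (see \cite{l1}, \cite{k}): every canonical basis element lies in $L(\infty)$ and maps, under $L_{\mathbb{Z}}(\infty)\to L_{\mathbb{Z}}(\infty)/v^{-1}L_{\mathbb{Z}}(\infty)$, to an element of $B(\infty)$ — crucially with coefficient $+1$, since canonical basis elements are normalized so that their leading coefficient is $1$. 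Combining this with the previous paragraph, the image of $\langle u_{\lambda}\rangle$ in $L_{\mathbb{Z}}(\infty)/v^{-1}L_{\mathbb{Z}}(\infty)$ equals the image of $b$, which lies in $B(\infty)$; together with Theorem \ref{thm 6 exc in crystal basis} this removes the sign.

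The main obstacle is bookkeeping rather than a new idea: one must carefully match the two realizations of $U^{+}$ — the algebra $K$ of $G_{W}$-invariant functions on the spaces $E_{\underline{c}}$ and the sheaf-theoretic realization $\coprod_{W}K_{W}$ in the sense of \cite{l4} — tracking every normalization (the factor $v_{q}^{-\dim\mathcal{O}_{\alpha}}$ in $f_{\alpha}$, the shifts in the $IC$ complexes, and the substitution $v=-q^{1/2}$) so that the identity ``image of $b_{e}$ $=$ image of $\langle u_{\lambda}\rangle$'' holds literally and the resulting sign is genuinely $+1$ and not $-1$. The remaining ingredients — Gabber purity, the structure of the $IC$ complex of a smooth open dense stratum, and the coincidence of the global crystal basis with the canonical basis — are standard and can simply be cited.
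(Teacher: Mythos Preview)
Your proposal is correct and follows essentially the same route as the paper: use that $\Ext^{1}(V_{\lambda},V_{\lambda})=0$ forces $\mathcal{O}_{\lambda}$ to be the open dense orbit, so $IC(\mathcal{O}_{\lambda},\overline{\mathbb{Q}_{l}})$ is the shifted constant sheaf, its trace function $b_{e}$ is constant equal to $v^{-\dim\mathcal{O}_{\lambda}}$, and the contributions from all non-open orbits die in $L_{\mathbb{Z}}(\infty)/v^{-1}L_{\mathbb{Z}}(\infty)$; hence $\langle u_{\lambda}\rangle$ has the same image as the canonical basis element $b$, which lies in $B(\infty)$. Your write-up is in fact slightly more explicit than the paper's about the final step (invoking the identification of the canonical basis with the global crystal basis so that $b\mapsto B(\infty)$ with coefficient $+1$), which the paper leaves implicit.
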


\section*{Acknowledgement} We thank Guanglian Zhang for his
help in completing the last section.

\bigskip
\noindent Yong Jiang, Department of Mathematical Sciences, Tsinghua
University, Beijing 10084, P.~R.~China\\ Email address:
jiangy00@mails.tsinghua.edu.cn
\bigskip

\noindent Jie Sheng, Department of Mathematical Sciences, Tsinghua
University, Beijing 10084, P.~R.~China\\ Email address:
shengjie00@mails.tsinghua.edu.cn
\bigskip

\noindent Jie Xiao, Department of Mathematical Sciences, Tsinghua
University, Beijing 10084, P.~R.~China\\ Email address:
jxiao@math.tsinghua.edu.cn

\end{document}